\newtheorem{thm}[subsection]{Theorem}
\newtheorem*{thm*}{Theorem}
\newtheorem{lem}[subsection]{Lemma}
\newtheorem{prop}[subsection]{Proposition}
\newtheorem{cor}[subsection]{Corollary}
\DeclareMathOperator{\MOD}{Mod}
\newcommand{\G}{\mathds{G}}
\newcommand{\Z}{\mathbb{Z}}
\newcommand{\Q}{\mathbb{Q}}
\newcommand{\F}{\mathbb{F}}
\newcommand{\Mod}{\MOD_G^{\sm}(k)}
\newcommand{\ModK}{\MOD_K^{\sm}(k)}
\DeclareMathOperator{\SL}{SL_2}
\newcommand{\B}{\mathcal{B}}
\renewcommand{\1}{\mathds{1}}
\renewcommand{\mod}{\MOD_G(k)}
\DeclareMathOperator{\GL}{GL}
\DeclareMathOperator{\Ind}{Ind}
\DeclareMathOperator{\cind}{c-Ind}
\DeclareMathOperator{\Hom}{Hom}
\newcommand{\Lim}{\varinjlim}
\newcommand{\plim}{\varprojlim}
\DeclareMathOperator{\sm}{sm}
\DeclareMathOperator{\Ext}{Ext}
\DeclareMathOperator{\ladm}{l adm}
\DeclareMathOperator{\lfin}{lfin}
\DeclareMathOperator{\Stab}{Stab}
\DeclareMathOperator{\cts}{cts}
\renewcommand{\O}{\mathcal{O}}
\DeclareMathOperator{\Ban}{Ban}
\DeclareMathOperator{\adm}{adm}
\newcommand{\Badm}{\Ban^{\adm}}
\newcommand{\OnK}{\O/\varpi^n\llbracket K \rrbracket }
\DeclareMathOperator{\fg}{fg}
\DeclareMathOperator{\m}{m}
\DeclareMathOperator{\E}{E}
\newcommand{\Ep}{\E[1/p]}
\DeclareMathOperator{\Ker}{Ker}
\DeclareMathOperator{\Sp}{Sp}
\DeclareMathOperator{\End}{End}
\newcommand{\Banb}{\Ban^{\adm}_{G,\zeta}(L)_\B}
\DeclareMathOperator{\mspec}{MaxSpec}
\DeclareMathOperator{\textP}{P}
\DeclareMathOperator{\oriented}{or}
\theoremstyle{definition}
\newtheorem{defi}[subsection]{Definition}
\theoremstyle{remark}
\newtheorem{rem}[subsection]{Remark}
\title{\vspace{-2cm}\textbf{Continuous Group Cohomology and Ext-Groups}}
\author{Paulina Fust}
\date{}
\begin{document}
	\maketitle
	\begin{abstract}
		We prove that the continuous group cohomology groups of a locally profinite group $ G $ with coefficients in a smooth $ k $-representation $ \pi $ of $ G $ are isomorphic to the $ \Ext $-groups $ \Ext^i_G(\1,\pi) $ computed in the category of smooth $ k $-representations of $ G $. We apply this to show that if $ \pi  $ is a supersingular $ \overline{\F}_p $-representation of $ \GL_2(\Q_p) $, then the continuous group cohomology of $ \SL(\Q_p)$ with values in $ \pi $ vanishes.
		
		Furthermore, we prove that the continuous group cohomology groups of a $ p $-adic reductive group $ G $, with coefficients in an admissible unitary $ \Q_p $-Banach space representation $ \Pi $, are finite dimensional. We show that the continuous group cohomology of $ \SL(\Q_p) $ with values in non-ordinary irreducible $ \Q_p $-Banach space representations of $ \GL_2(\Q_p) $ vanishes.
	\end{abstract}
	\section{Introduction} \label{Intro}
	
	Let $ k $ be a commutative ring with 1 and let $ G $ be a locally profinite group in the sense of \cite{bernshtein1976representations}, i.e.~a topological group which has a fundamental system of neighborhoods of the unit element consisting of compact open subgroups. 
	
	A smooth $ k $-representation of $ G $ is a $ k [G]$-module $ \pi $, such that the stabilizer $ \Stab_G(v) $ of any element $ v\in \pi $ is open in $ G $. Denote by $ \Mod $ the category of all smooth $ k $-representations of $ G $. We prove the following:
	 \begin{thm}[Corollary \ref{main result}]\label{Theorem Ext=H Intro}
	 	For any $ \pi \in \Mod $, and any $ i\ge 0 $, we have isomorphisms:
	\begin{equation}\label{Ext cong H intro}
	\Ext^i_G(\mathds{1},\pi)\cong H^i(G,\pi),
	\end{equation}
	where $ H^i(G,\pi) $ is the continuous group cohomology group of $ G $ with coefficients in $ \pi $ and the Ext-group is computed in the category $ \Mod $. 
	 \end{thm}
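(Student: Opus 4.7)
The plan is to identify both sides of \eqref{Ext cong H intro} with the $i$-th right derived functor of the left exact functor of $G$-invariants $(-)^G\colon \Mod\to \Mod_k$. In degree zero the two agree, since
\[
\Hom_G(\mathds{1},\pi)=\pi^G=H^0(G,\pi).
\]
It therefore suffices to check that both cohomological functors $\Ext^i_G(\mathds{1},-)$ and $H^i(G,-)$ are universal (equivalently, effaceable) $\delta$-functors on $\Mod$.

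For $\Ext^\ast_G(\mathds{1},-)$ this is automatic once one knows that $\Mod$ has enough injectives, a standard fact for the category of smooth $k$-representations of a locally profinite group. For $H^\ast(G,-)$, I would compute it via the complex $C^n_{\cts}(G,\pi)$ of continuous cochains $G^n\to\pi$. Since every $\pi\in\Mod$ carries the discrete topology as a $k$-module, continuity amounts to local constancy, and the assignment $\pi\mapsto C^n_{\cts}(G,\pi)$ is an exact functor on $\Mod$: a locally constant map $G^n\to\pi''$ takes only finitely many values on each compact subset of $G^n$, and lifting each of those values to $\pi$ provides a locally constant lift of the cochain. Consequently, a short exact sequence in $\Mod$ yields a short exact sequence of cochain complexes, and hence the long exact sequence in $H^\ast(G,-)$.

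The key step is effaceability for $H^\ast(G,-)$. For $\pi\in\Mod$, consider the coinduction $I(\pi):=C^{\sm}(G,\pi_0)$, where $\pi_0$ denotes $\pi$ regarded merely as a $k$-module and $G$ acts by right translation. The map $\pi\hookrightarrow I(\pi)$, $v\mapsto(g\mapsto gv)$, is well-defined (the image is locally constant because $\Stab_G(v)$ is open by smoothness), injective, and $G$-equivariant. I would then show $H^i(G,I(\pi))=0$ for $i\ge 1$ by exhibiting an explicit contracting homotopy on the cochain complex. Identifying an $n$-cochain with a locally constant map $\phi\colon G^{n+1}\to\pi_0$ via $\phi(g_1,\ldots,g_n)(h)$, the operator
\[
(s\phi)(g_1,\ldots,g_{n-1})(h):=\phi(h,g_1,\ldots,g_{n-1})(e)
\]
should satisfy $ds+sd=\mathrm{id}$ in degrees $n\ge 1$: the ``leading'' term of $sd$ produces $\phi(g_1,\ldots,g_n)(h)$, the face that merges the inserted variable $h$ with $g_1$ cancels the $G$-action term in $ds$, and the alternating remainders pair up. This shows that $I(\pi)$ is $H^\ast(G,-)$-acyclic, so every object of $\Mod$ embeds into an $H^\ast$-acyclic.

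I expect the contracting-homotopy identity to be the only substantial computation; the resulting comparison with $\Ext^\ast_G(\mathds{1},-)$ is then formal, by dimension-shifting or by appeal to uniqueness of universal $\delta$-functors. A subtlety worth treating carefully is that the coinduction $I(\pi)$ used above is an $H^\ast$-acyclic resolution rather than necessarily an injective resolution in $\Mod$ (the latter would require further hypotheses on $\pi_0$), but $H^\ast$-acyclicity is all that the derived-functor comparison needs.
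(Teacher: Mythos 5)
Your overall strategy (compare two $\delta$-functors that agree in degree $0$, with universality of $\Ext^\ast_G(\1,-)$ coming from enough injectives and universality of $H^\ast(G,-)$ from effaceability) is a legitimate alternative to the paper's explicit comparison of resolutions, and your exactness argument for $\pi\mapsto C^n_{\cts}(G,\pi)$ on discrete coefficients is fine. The gap is in the key step, the claim that $I(\pi)=C^{\sm}(G,\pi_0)$ is $H^\ast(G,-)$-acyclic via the homotopy $(s\phi)(g_1,\dots,g_{n-1})(h)=\phi(h,g_1,\dots,g_{n-1})(e)$. First, an $n$-cochain with values in the discrete module $I(\pi)$ is not the same thing as an arbitrary locally constant map $G^{n+1}\to\pi_0$: it is a locally constant map $G^n\to I(\pi)$, so for each fixed $(g_1,\dots,g_n)$ the function $h\mapsto\phi(g_1,\dots,g_n)(h)$ must be \emph{uniformly} locally constant (invariant under right translation by a single open subgroup), whereas in the variables $g_1,\dots,g_n$ one only has local constancy with no uniformity. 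Your operator $s$ moves a cochain variable into the value slot and destroys exactly this uniformity: $h\mapsto\phi(h,g_1,\dots,g_{n-1})(e)$ is locally constant but in general not smooth, so $s\phi$ need not take values in $I(\pi)$ and is not a cochain. Concretely, take $G=(\Q_p,+)$, $\pi=\1$, and let $c:\Q_p\to k$ be the indicator function of $\bigcup_{n\ge1}(p^{-n}+p^n\Z_p)$ (the paper's own example of a locally constant, non-smooth function); then $\phi(g_1):=c(g_1)\cdot\mathbf{1}$, with $\mathbf{1}\in I(\1)$ the constant function, is a valid $1$-cochain, but $(s\phi)(h)=c(h)$ defines a function that does not lie in $I(\1)$. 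So $ds+sd=\mathrm{id}$ is at best an identity on a larger complex of locally constant maps $G^{n+1}\to\pi_0$, which is not the continuous cochain complex of the topological $G$-module $I(\pi)$, and the acyclicity you need does not follow.

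This is precisely the compact/noncompact dichotomy the paper is organized around: for profinite $K$ your homotopy works because continuous equals smooth there, and that is the paper's profinite-case proposition; for noncompact $G$ the smooth induction $I(\pi)\cong C(G,\pi)^{\sm}$ is indeed acyclic for continuous cohomology, but in the paper this comes out as a consequence of the theorem rather than serving as an input. The paper avoids the homotopy problem by writing $C^\bullet(G,\pi)^{\sm}=\Lim_K C^\bullet(G,\pi)^K$ over compact open subgroups $K$, identifying $H^i(C^\bullet(G,\pi)^K)$ with $H^i(K,\pi)$ (Casselman--Wigner), and showing these classes die in the direct limit because a smooth cocycle restricts to zero on a small enough $K'\le K$; acyclicity of the terms for $\Ext_G(\1,-)$ is then obtained from Frobenius reciprocity and exactness of $C(G,-)^{\sm}$, not from a contracting homotopy. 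To salvage your effaceability argument you would have to prove $H^i(G,I(\pi))=0$ for $i\ge1$ by some such limiting argument over compact open subgroups, at which point you have essentially reproduced the paper's proof.
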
 
 	The category $ \Mod $ is an abelian category which moreover has enough injectives by Proposition 2.1.1 in \cite{Emerton}. Therefore, the Ext-groups in $ \Mod $ can be defined by the right-derived functors of $ \Hom $. We prove Theorem \ref{Theorem Ext=H Intro} by showing that applying the functor of smooth vectors to the resolution of $ \pi $ used to compute the continuous group cohomology gives a resolution of $ \pi $ in $ \Mod $. This question was raised by Emerton in \cite[Section 2.2]{Emerton} and it is known in the case of compact groups (\cite[Proposition 2.2.6]{Emerton}).
	
	In Section \ref{section apllication}, we apply our result to the group $ G=\GL_2(\Q_p) $. Let $ \lvert - \rvert $ be a norm on $ \Q_p $, normalized so that $ \lvert p\rvert=1/p $. Then we can define a character $ \varepsilon:\Q_p^\times \rightarrow \Z_p^\times $, by $ x\mapsto x\lvert x \rvert $. Using Theorem \ref{Theorem Ext=H Intro}, we prove the following: \begin{thm}[Corollary \ref{H(SL2,pi)=0}]
		Let $ k $ be a finite field of characteristic $ p $ and let $ \pi\in\MOD^{\sm}_{\GL_2(\Q_p)} (k)$ be absolutely irreducible and not isomorphic to a twist by a character of $ \1 $, $ \Sp $ or $ \Ind_B^G\alpha $ , where $ \alpha:B\rightarrow k^\times  $ is defined by $ \begin{pmatrix}
		a&b\\0& d
		\end{pmatrix}\mapsto \varepsilon(ad^{-1}) \ \mathrm{ mod }\ p$. Then 
		 \[ H^i(\SL(\Q_p),\pi)=0 ,\]for $  i\ge0 $.
	\end{thm}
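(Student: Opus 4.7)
The strategy is to turn continuous cohomology into an Ext-group via Theorem~\ref{Theorem Ext=H Intro}, move the Ext from $\SL(\Q_p)$ to $G := \GL_2(\Q_p)$ by a Shapiro-type isomorphism, and conclude via Pa\v{s}k\=unas' block decomposition of the smooth representation category of $G$.

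Applying Theorem~\ref{Theorem Ext=H Intro} to $\pi$ viewed as a smooth $\SL(\Q_p)$-representation yields $H^i(\SL(\Q_p), \pi) \cong \Ext^i_{\SL(\Q_p)}(\1, \pi|_{\SL(\Q_p)})$, so it suffices to prove the vanishing of the right-hand side for all $i \geq 0$. Since compact induction $\cind_{\SL(\Q_p)}^G$ is exact and left adjoint to the (exact) restriction functor, which therefore preserves injectives, I would use the Shapiro isomorphism
\[ \Ext^i_{\SL(\Q_p)}(\1, \pi|_{\SL(\Q_p)}) \;\cong\; \Ext^i_G\!\bigl(\cind_{\SL(\Q_p)}^G \1,\; \pi\bigr). \]

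Via the determinant, $G / \SL(\Q_p) \cong \Q_p^\times$, and hence $\cind_{\SL(\Q_p)}^G \1 \cong C^{\sm}_c(\Q_p^\times, k)$, on which $G$ acts through $\det$. I would express this module as the filtered colimit, over compact open subgroups $U \subset \Q_p^\times$, of the finite-dimensional permutation representations $k[\Q_p^\times / U]$, and, after base change to $\overline{k}$, further decompose each of these into smooth characters $\chi \circ \det$. A $\varprojlim/\varprojlim^1$ spectral sequence for $\Ext$ then reduces the problem to the vanishing of $\Ext^i_G(\1,\, \pi \otimes (\chi \circ \det)^{-1})$ for every smooth character $\chi : \Q_p^\times \to \overline{k}^\times$ and every $i \geq 0$. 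Each such twist $\pi \otimes (\chi \circ \det)^{-1}$ remains absolutely irreducible and, as the exclusion list is stable under twist, is still not a twist of $\1$, $\Sp$, or $\Ind_B^G \alpha$. By Pa\v{s}k\=unas' block decomposition of $\MOD_{\GL_2(\Q_p)}^{\sm}(\overline{\F}_p)$, the unique block containing $\1$ is comprised of $\1$, $\Sp$, $\Ind_B^G \alpha$ and their nonsplit extensions; hence $\1$ and $\pi \otimes (\chi \circ \det)^{-1}$ belong to different blocks, and all their Ext-groups vanish.

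The main obstacle will be the Shapiro/colimit step: one must verify the Shapiro isomorphism for the \emph{non-open} closed subgroup $\SL(\Q_p) \subset G$, and control the $\varprojlim^1$-terms produced when decomposing $C^{\sm}_c(\Q_p^\times, k)$ into character-isotypic pieces. Care is needed especially when $p = 2$, where the index $[G : Z \cdot \SL(\Q_p)] = 8$ is not prime to the residue characteristic, so the usual transfer arguments break down. Once these technicalities are settled, the final vanishing is an immediate consequence of the block structure.
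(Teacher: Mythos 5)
There is a genuine gap at your Shapiro step, and it is not a removable technicality. The subgroup $\SL(\Q_p)$ is closed but \emph{not open} in $G=\GL_2(\Q_p)$, and in the smooth category $\cind_H^G$ is left adjoint to restriction only for \emph{open} $H$; for a closed non-open subgroup no such adjunction holds, so Lemma \ref{Vigneras Lemma on Ext groups} does not apply to the pair $(\cind_{\SL(\Q_p)}^G,\mathrm{res})$. In fact the isomorphism you assert already fails in degree $0$: identifying $\cind_{\SL(\Q_p)}^G\1$ with $C_c^{\sm}(\Q_p^\times,k)$ with $G$ acting through $\det$, the $G$-coinvariants of this module vanish in characteristic $p$ (the indicator function of $1+p^n\Z_p$ is the sum of $p$ translates of the indicator of $1+p^{n+1}\Z_p$), so $\Hom_G(\cind_{\SL(\Q_p)}^G\1,\eta\circ\det)=0$ for every smooth character $\eta$, while $\Hom_{\SL(\Q_p)}(\1,(\eta\circ\det)|_{\SL(\Q_p)})=k$. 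Your later reductions inherit further problems: $\Q_p^\times/U\cong\Z\times(\text{finite})$ for a compact open $U$, so $k[\Q_p^\times/U]$ is infinite dimensional and is not a direct sum of characters (think of $k[t,t^{-1}]$), and Pa\v{s}k\={u}nas' block decomposition lives in the locally finite category with a \emph{fixed central character}, not in $\MOD^{\sm}_{G}(k)$ at large, so even the final vanishing step needs the comparison of Ext-groups (Corollary 5.17 of the cited reference) and a fixed $\zeta$.

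The paper's proof avoids the non-open subgroup entirely. One first disposes of the case where $-1\in Z\cap\SL(\Q_p)$ acts by a scalar $\neq 1$ using Lemma \ref{wlog center acts trivially}. Then, instead of inducing from $\SL(\Q_p)$, one extends the $\SL(\Q_p)$-action to the \emph{open finite-index} subgroup $Z\SL(\Q_p)=\det^{-1}((\Q_p^\times)^2)$ by letting $Z$ act through $\zeta_\pi$ (an exact functor which is left adjoint to the exact restriction functor), and only then induces from $Z\SL(\Q_p)$ to $G$; for an open subgroup of finite index both adjunctions are available, so Lemma \ref{Vigneras Lemma on Ext groups} applies twice and yields $H^i(\SL(\Q_p),\pi)\cong\Ext^i_{G,\zeta_\pi}(\Ind^G_{Z\SL(\Q_p)}\zeta_\pi,\pi)$ with a $4$- or $8$-dimensional induced representation. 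That representation is filtered (after enlarging $k$) by characters $\eta\circ\det$, and the vanishing then follows from the block decomposition of $\MOD^{\lfin}_{G,\zeta_\pi}(k)$ as in Proposition \ref{ext_Z=0}. If you want to rescue your outline, you should replace the Shapiro step by this passage through $Z\SL(\Q_p)$ (or otherwise prove a genuine substitute for Frobenius reciprocity for the closed subgroup $\SL(\Q_p)$, which the degree-zero computation above shows cannot take the naive form).
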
 
	
	This includes for example all supersingular representations $ \pi\in\MOD^{\sm}_{\GL_2(\Q_p)} (k) $. The proof makes use of calculations of $ \Ext $-groups due to Pa\v{s}k\={u}nas (\cite{paskColmez}).
	This result is used by Colmez, Dospinescu and Nizioł in their forthcoming work.
	
	In Section \ref{section banach reps}, we study the continuous group cohomology groups of a $ p $-adic reductive group with coefficients in an admissible unitary $ L $-Banach space representation, where $ L /\Q_p$ is a finite extension. If $ \Pi $ is an admissible unitary $ L $-Banach space representation of $ G $ and $ \Pi^0 $ is an open bounded $ G $-invariant lattice in $ \Pi $, then we show that (Proposition \ref{H=limH}) for all $ i\ge 0 $ one has isomorphisms \[ H^i(G,\Pi)\cong (\plim_n H^i(G,\Pi^0/\varpi^n\Pi^0))[1/\varpi],\]
	for $ \varpi $ a uniformizer of $ L $. The proof uses the Bruhat--Tits building of $ G $ to obtain a resolution of the trivial representation of $ G $ by compactly induced representations from compact-mod-center subgroups of $ G $. Such resolutions appear in the work of Schneider--Stuhler \cite{schneider1997representation} and Casselman--Wigner \cite{CW}. Moreover, we deduce\begin{thm}[Corollary \ref{H^i(G,Pi) are finite diml}]
		 Let $ \Pi$ be an admissible unitary $ L $-Banach space representation of a $ p $-adic reductive group $ G $, then $ H^i(G,\Pi) $ is finite dimensional over $ L $, for all $ i\ge 0 $.
	\end{thm}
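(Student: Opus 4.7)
The plan is to reduce modulo $\varpi$ and combine the Schneider--Stuhler resolution with Theorem \ref{Theorem Ext=H Intro}. By Proposition \ref{H=limH},
\[ H^i(G, \Pi) \cong \bigl(\plim_n H^i(G, \Pi^0/\varpi^n\Pi^0)\bigr)[1/\varpi], \]
so it suffices to prove that $V := \plim_n H^i(G, \Pi^0/\varpi^n\Pi^0)$ is a finitely generated $\O$-module; inverting $\varpi$ then gives the claim.

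The first substantive step is to show that $H^i(G, \Pi^0/\varpi\Pi^0)$ is finite-dimensional over $k = \O/\varpi$. The key input is the Schneider--Stuhler / Casselman--Wigner resolution of the trivial $G$-representation $\1$ by smooth representations of the form $\bigoplus_\sigma \cind_{K_\sigma}^G \1$, where $\sigma$ runs over $G$-orbits of facets of the Bruhat--Tits building and $K_\sigma$ denotes the compact-mod-center stabilizer of $\sigma$; this resolution has finite length (equal to the semisimple rank of $G$) with only finitely many summands in each degree. By Theorem \ref{Theorem Ext=H Intro}, $H^i(G, \Pi^0/\varpi\Pi^0) \cong \Ext^i_G(\1, \Pi^0/\varpi\Pi^0)$, and applying $\Hom_G(-, \Pi^0/\varpi\Pi^0)$ to the resolution produces, via Frobenius reciprocity, a finite complex with terms $\bigoplus_\sigma (\Pi^0/\varpi\Pi^0)^{K_\sigma}$. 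Admissibility of $\Pi$ forces each $(\Pi^0/\varpi\Pi^0)^{K_\sigma}$ to be finite-dimensional over $k$ (since $K_\sigma$ contains a compact open subgroup), so the computing complex has finite-dimensional terms and $H^i(G, \Pi^0/\varpi\Pi^0)$ is itself finite-dimensional.

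Next we bootstrap to $\O$-coefficients via topological Nakayama. The short exact sequence $0 \to \Pi^0 \xrightarrow{\varpi} \Pi^0 \to \Pi^0/\varpi\Pi^0 \to 0$, together with the Mittag--Leffler property of the inverse system $\{H^i(G, \Pi^0/\varpi^n\Pi^0)\}_n$ (automatic once each term is Artinian over $\O/\varpi^n$, which follows from the previous step by devissage along $0 \to \varpi^{n-1}\Pi^0/\varpi^n\Pi^0 \to \Pi^0/\varpi^n\Pi^0 \to \Pi^0/\varpi^{n-1}\Pi^0 \to 0$), yields an injection $V/\varpi V \hookrightarrow H^i(G, \Pi^0/\varpi\Pi^0)$. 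Since $V$ is $\varpi$-adically complete and separated as an inverse limit of $\O/\varpi^n$-modules, and $V/\varpi V$ is finite-dimensional by the preceding step, topological Nakayama implies that $V$ is finitely generated over $\O$, whence $H^i(G, \Pi) \cong V[1/\varpi]$ is finite-dimensional over $L$.

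The main obstacle I anticipate is the clean transfer between the inverse limit $V$ and its reduction modulo $\varpi$: one has to commute $H^i(G, -)$ with the $\varpi$-adic limit defining $\Pi^0$ and verify the Mittag--Leffler condition. Once the preliminary Artinianness of $H^i(G, \Pi^0/\varpi^n\Pi^0)$ is in hand, both the Nakayama bound and the final passage to $V[1/\varpi]$ are formal, and the crux of the theorem really rests on the combination of the Bruhat--Tits resolution with the Ext-vs.-continuous-cohomology identification of Theorem \ref{Theorem Ext=H Intro}.
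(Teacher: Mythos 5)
Your overall architecture (reduce to $\plim_n H^i(G,\Pi^0/\varpi^n\Pi^0)$ via Proposition \ref{H=limH}, prove mod-$\varpi$ finiteness, then conclude by topological Nakayama on the compact $\O$-module) matches the paper. But there is a genuine gap at your ``first substantive step.'' You apply $\Hom_G(-,\Pi^0/\varpi\Pi^0)$ to the Bruhat--Tits resolution by compact inductions and assert that the resulting finite complex with terms $\bigoplus_\sigma(\Pi^0/\varpi\Pi^0)^{K_\sigma}$ computes $\Ext^i_G(\1,\Pi^0/\varpi\Pi^0)$. That would require the terms $\cind^G_{P_F^\dagger}\delta_F$ of the resolution to be projective, or at least acyclic for $\Hom_G(-,\Pi^0/\varpi\Pi^0)$, in $\MOD^{\sm}_G(\O/\varpi)$. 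In residue characteristic $p$ this fails: the facet stabilizers $P_F^\dagger$ are compact-mod-center $p$-adic groups, and $\Ext^j_G(\cind^G_{P_F^\dagger}\delta_F,\pi)\cong\Ext^j_{P_F^\dagger}(\delta_F,\pi)\cong H^j(P_F^\dagger,\pi\otimes\delta_F^{-1})$ is nonzero in general for $j>0$ (mod-$p$ cohomology of $p$-adic groups does not vanish), so your complex only sees the $j=0$ layer. The paper instead forms the double complex against an injective resolution of $\pi$ and gets a spectral sequence $E_2^{i,j}=H^i\bigl(\bigoplus_{F\in R_\bullet}\Ext^j_{P_F^\dagger}(\delta_F,\pi)\bigr)\Rightarrow\Ext^{i+j}_G(\1,\pi)$, so finiteness of \emph{all} the groups $\Ext^j_{P_F^\dagger}(\delta_F,\pi)$ is needed, not just of the invariants.

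Admissibility alone does not give this: it only bounds the $j=0$ terms $(\Pi^0/\varpi\Pi^0)^{K_\sigma}$. The real work, which your proposal omits, is Lemma \ref{compact implies finite} --- for a compact $p$-adic analytic subgroup the Pontryagin dual of an admissible $\pi$ is finitely generated over the Noetherian Iwasawa algebra $\OnK$, whence all $H^j(R_F,\pi\otimes\delta_F^{-1})$ are finitely generated --- followed by the Hochschild--Serre devissage of Lemma \ref{contains compact implies finite} along $R_F\le ZR_F\le P_F^\dagger$ to reach the full (non-compact) stabilizers. Without these inputs your claim that $H^i(G,\Pi^0/\varpi\Pi^0)$ is finite-dimensional is unproven, and the rest of the argument (Artinianness of the tower, Mittag--Leffler, Nakayama), though essentially the paper's, has nothing to stand on. A secondary inaccuracy: the resolution is by $\cind^G_{P_F^\dagger}\delta_F$ with the orientation characters $\delta_F$, not by compact inductions of the trivial character, though this does not affect the finiteness question.
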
 In the case where $ G $ is a compact $ p $-adic analytic group, these isomorphisms follow directly from \cite{emerton2006interpolation}.
	
	As in Section \ref{section apllication}, we apply these results to the group $ \GL_2(\Q_p) $ and obtain a similar statement for Banach space representations:
	\begin{thm}[Proposition \ref{H(SL2,Pi)=0}]
		Any absolutely irreducible admissible unitary $ L $-Banach space representation $ \Pi $ of $ \GL_2(\Q_p) $, which is not isomorphic to a twist by a unitary character of $ \1 $, $ \widehat{\Sp} $ or $  \Ind^G_B\tilde{\alpha} $, has trivial continuous group cohomology groups over $ \SL(\Q_p) $, i.e.\[ H^i(\SL(\Q_p),\Pi)=0 ,\] for $ i\ge0 $.
	\end{thm}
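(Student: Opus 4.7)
The plan is to reduce the statement about $L$-Banach space representations to the mod-$p$ vanishing result of Corollary \ref{H(SL2,pi)=0} by a $\varpi$-adic dévissage.

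First, I would fix an open bounded $G$-invariant lattice $\Pi^0$ in $\Pi$ and a uniformizer $\varpi$ of $L$. By Proposition \ref{H=limH} one has
\[ H^i(\SL(\Q_p), \Pi) \cong \bigl(\plim_n H^i(\SL(\Q_p), \Pi^0/\varpi^n \Pi^0)\bigr)[1/\varpi], \]
so it is enough to show that $H^i(\SL(\Q_p), \Pi^0/\varpi^n \Pi^0) = 0$ for every $i\ge 0$ and every $n \ge 1$. The short exact sequences
\[ 0 \to \Pi^0/\varpi\Pi^0 \xrightarrow{\varpi^n} \Pi^0/\varpi^{n+1}\Pi^0 \to \Pi^0/\varpi^n \Pi^0 \to 0, \]
together with the long exact sequence in continuous cohomology and induction on $n$, reduce this to the case $n=1$, namely to showing that $H^i(\SL(\Q_p), \overline{\Pi}) = 0$, where $\overline{\Pi} := \Pi^0/\varpi\Pi^0$ is a smooth admissible representation of $\GL_2(\Q_p)$ over the residue field $k$ of $\O_L$.

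Since $\Pi$ is admissible, $\overline{\Pi}$ has finite length, so a further dévissage along a composition series reduces the problem to establishing the vanishing of $H^i(\SL(\Q_p), \sigma)$ for each irreducible constituent $\sigma$ of $\overline{\Pi}$. After extending scalars from $k$ to $\overline{\F}_p$, which commutes with continuous cohomology, it suffices to treat the absolutely irreducible constituents of $\overline{\Pi} \otimes_k \overline{\F}_p$. For each such constituent, Corollary \ref{H(SL2,pi)=0} yields the desired vanishing as long as it is not isomorphic to a twist by a character of $\1$, $\Sp$, or $\Ind_B^G \alpha$.

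The main obstacle will be this last verification: under the hypothesis that $\Pi$ is not a twist of $\1$, $\widehat{\Sp}$, or $\Ind_B^G \tilde{\alpha}$, no absolutely irreducible mod-$p$ constituent of $\overline{\Pi}$ should be a twist of $\1$, $\Sp$, or $\Ind_B^G \alpha$. I expect this to follow from the block decomposition of the category of admissible unitary $L$-Banach space representations of $\GL_2(\Q_p)$ and its compatibility with the mod-$p$ block decomposition, both due to Pa\v{s}k\={u}nas: the mod-$p$ block containing $\1$, $\Sp$, and $\Ind_B^G \alpha$ together with their twists lifts precisely to the Banach block containing the three excluded types, so that a $\Pi$ outside this block has reduction $\overline{\Pi}$ whose constituents all avoid the exceptional mod-$p$ list.
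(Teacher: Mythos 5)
There is a genuine gap, and it sits exactly at the step you flag as the ``main obstacle.'' Your claim that the mod-$p$ block of $\1$, $\Sp$, $\Ind_B^G\alpha$ ``lifts precisely to the Banach block containing the three excluded types'' is false: the Banach block $\Banb$ containing $\eta\circ\det$ also contains absolutely irreducible \emph{non-ordinary} representations (e.g.\ $\Pi(V)$ for $V$ an absolutely irreducible $2$-dimensional Galois representation whose reduction is reducible), and these are precisely the most interesting representations covered by the theorem (cf.\ the abstract). Such a $\Pi$ satisfies the hypotheses, yet every irreducible constituent of $\Pi^0/\varpi\Pi^0$ \emph{is} a twist of $\1$, $\Sp$ or $\Ind_B^G\alpha$; for those constituents $H^i(\SL(\Q_p),\sigma)$ does not vanish (already $H^0(\SL(\Q_p),\bar\eta\circ\det)=k$, since $\SL(\Q_p)$ acts trivially through $\det$). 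So your constituent-by-constituent dévissage cannot give $H^i(\SL(\Q_p),\Pi^0/\varpi^n\Pi^0)=0$, and indeed that intermediate vanishing need not hold at all: at finite level one can have nonzero ($\varpi$-torsion) cohomology which only disappears after passing to the inverse limit and inverting $\varpi$. This is why the paper does not try to prove vanishing mod $\varpi^n$.

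The paper's route instead keeps the problem at the Banach level: after reducing (via Lemma \ref{wlog center acts trivially}, Corollary \ref{main result}, Frobenius reciprocity and Proposition \ref{H=limH}) to computing $\plim_n\Ext^i_{G,\bar\zeta}(\Ind^G_{Z\SL(\Q_p)}\zeta/\varpi^n,\Pi^0/\varpi^n\Pi^0)[1/\varpi]$, it invokes Proposition \ref{prop Ext=limExt} to identify this with the Yoneda Ext-group $\Ext^i_{\Badm_{G,\zeta}(L)}(\Ind^G_{Z\SL(\Q_p)}\zeta\otimes_\O L,\Pi)$, dévisse the finite-dimensional induced representation into unitary characters $\eta\circ\det$, and then uses not just the block decomposition of $\Badm_{G,\zeta}(L)$ but the finer direct-sum decomposition of $\Banb$ over maximal ideals of the center $Z_\B[1/p]$ together with Corollary 6.10 of \cite{pavskunas2021finiteness}: the summand containing $\eta\circ\det$ has \emph{only} the three excluded families as irreducibles, so there are no extensions between $\Pi$ and $\eta\circ\det$. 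Your outline would work for $\Pi$ whose reduction lies in a block containing no characters (there your dévissage plus Corollary \ref{H(SL2,pi)=0} is a shortcut avoiding Proposition \ref{prop Ext=limExt}), but as written it does not prove the stated theorem, whose essential content is the non-ordinary case; some replacement for the finer decomposition of $\Banb$ is unavoidable there. (Minor additional points: finite length of $\Pi^0/\varpi\Pi^0$ is a nontrivial theorem, not a formal consequence of admissibility, and the compatibility of continuous cochains of the non-compact group $\SL(\Q_p)$ with the scalar extension $k\to\overline{\F}_p$ needs the Ext-interpretation rather than a direct limit argument on cochains.)
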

	Here, $ \tilde{\alpha} :B\rightarrow L^\times $ is the representation of $ B $, defined by $ \begin{pmatrix}
	a&b\\0& d
	\end{pmatrix}\mapsto \varepsilon(ad^{-1})  $.
	
		\textit{Acknowledgements}: I want to thank my PhD supervisor Vytautas Pa\v{s}k\={u}nas for his support and Gabriel Dospinescu, Jessica Fintzen, Lennart Gehrmann and the referee for their helpful comments and suggestions. This work was funded by the DFG Graduiertenkolleg 2553.
	
	\section{Continuous group cohomology}
	
	Following \cite{CW}, we define the continuous group cohomology as follows:
	Let $ V $ be a topological $ G $-module, i.e. a topological abelian group $ V $ with a $ G $-action such that $ G $ acts on $ V $ via group automorphisms and the map $ G\times V\rightarrow V $ is continuous. 
	Then we can define the cochain complex $$ C^n(G,V):= C(G^{n+1},V) :=\{f:G^{n+1}\rightarrow V \text{ continuous}\},$$ with differentials $ d^n:C^{n}(G,V)\rightarrow C^{n+1}(G,V) $, defined by $$
		d^nf(g_0,\dots,g_{n+1})=\sum_{i=0}^{n+1}(-1)^i f(g_0,\dots , \widehat{g_i},\dots, g_{n+1}).$$

		We endow the spaces $ C^n(G,V) $ with the compact-open topology. By defining a $ G $-action via $ (gf)(g_0,\dots,g_n)=gf(g^{-1}g_0,\dots ,g^{-1}g_n) $, for $ g,g_0,\dots,g_n\in G $ and $ f\in C^n(G,V) $, these will define topological $ G $-modules. Furthermore, there is a continuous $ G $-equivariant injection $ V\hookrightarrow C(G,V) $, defined by $ v\mapsto [g\mapsto v] $.\\
	
	For a closed subgroup $ H\le G $ and a topological $ G $-module $ V $, define the induced representation to be $$ \Ind_H^GV:=\{f\in C(G,V) \mid f(hg)=hf(g)\ \forall h\in H,\ \forall g\in G\} ,$$ 
	with the subspace topology induced from $ \Ind^G_HV\subseteq C(G,V) $ and $ G $-action $ gf(g'):=f(g'g) $, for $ g,g'\in G $. Similarly, the compact induction is defined as $$ \cind_H^GV:=\{f\in \Ind_H^G V\mid  \text{ the support of } f \text{ is compact modulo }H\}  $$ with the same $ G $-action.
	
	\begin{lem} \label{Cn=C0}
		For a topological $ G $-module $ V $, there are homeomorphisms of $ G$-modules \begin{equation*}
		C^{n+1}(G,V) \cong C^0(G,C^n(G,V)) \text{ and } C^0(G,V)\cong \Ind^G_1V,
		\end{equation*}
		for all $ n\ge 0 $.
	\end{lem}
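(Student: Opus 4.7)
My plan is to handle the two homeomorphisms separately, each reducing to an explicit continuous bijection whose $G$-equivariance is a short calculation on the defining formulas.

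For the first isomorphism $C^{n+1}(G,V) \cong C^0(G,C^n(G,V))$, I would appeal to the exponential law for the compact-open topology: when $X$ is locally compact Hausdorff and $Y,Z$ are Hausdorff, the currying map $C(X\times Y,Z)\to C(X,C(Y,Z))$, $f\mapsto [x\mapsto [y\mapsto f(x,y)]]$, is a homeomorphism. Since $G$ is locally profinite, it and each finite power $G^{m}$ are locally compact Hausdorff, so taking $X=G$, $Y=G^{n+1}$, $Z=V$ produces a homeomorphism $C(G^{n+2},V)\cong C(G,C(G^{n+1},V))$, which is exactly $C^{n+1}(G,V)\cong C^0(G,C^n(G,V))$. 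Writing $\Psi(f)(g_0)(g_1,\dots,g_{n+1})=f(g_0,g_1,\dots,g_{n+1})$, I would verify $G$-equivariance directly: $\Psi(gf)(g_0)(g_1,\dots,g_{n+1}) = g\cdot f(g^{-1}g_0,\dots,g^{-1}g_{n+1})$, and this coincides with $(g\cdot \Psi(f))(g_0)(g_1,\dots,g_{n+1})$ after unfolding the nested $C^0$-action, which contributes a $g^{-1}$ both in the outer variable $g_0$ and (via the inner $C^n$-action) in each $g_i$ for $i\ge 1$, as well as the factor $g$ acting on $V$.

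For the second isomorphism, the constraint $f(hg)=hf(g)$ defining $\Ind_1^G V$ is vacuous when $H=\{1\}$, so $\Ind_1^G V$ and $C^0(G,V)$ agree as topological spaces via the identity on $C(G,V)$. The two $G$-actions differ, however: right translation $(gf)(g')=f(g'g)$ on $\Ind_1^G V$ versus $(gf)(g_0)=g\cdot f(g^{-1}g_0)$ on $C^0(G,V)$. I would define $\Phi:C^0(G,V)\to \Ind_1^G V$ by $\Phi(f)(g)=g\cdot f(g^{-1})$. Continuity of $\Phi$ and of its inverse, which is $\Phi$ itself since $\Phi^2=\mathrm{id}$, follows from continuity of the $G$-action on $V$ together with continuity of inversion in $G$, and the short calculation $\Phi(hf)(g)=g\cdot(hf)(g^{-1})=gh\cdot f((gh)^{-1})=\Phi(f)(gh)=(h\Phi(f))(g)$ shows $\Phi$ is $G$-equivariant.

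The only nontrivial technical input is the exponential law for the compact-open topology; it applies here precisely because local profiniteness of $G$ supplies the local compactness and Hausdorffness needed to curry and uncurry continuously. Once that is in place, the rest is just bookkeeping of the explicit formulas for the actions.
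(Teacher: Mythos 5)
Your proposal is correct and takes essentially the same route as the paper: the first homeomorphism is exactly the exponential law for the compact-open topology (the paper simply cites Bourbaki, X.3.4, Corollaire 2), and the second is the same involution $f\mapsto[g\mapsto g\,f(g^{-1})]$ on $C^0(G,V)\cong\Ind^G_1V$ with the same continuity and equivariance checks. The only minor quibble is that in the exponential law the essential local-compactness hypothesis sits on the inner factor $G^{n+1}$ (and no Hausdorffness of $V$ is needed), but since every finite power of a locally profinite $G$ is locally compact Hausdorff this does not affect your argument.
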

	
	\begin{proof}
		For the first homeomorphism, see \cite[X.3.4 Corollaire 2]{Bourbaki}.
		
		The assignment $ f\mapsto  [g\mapsto gf(g^{-1})]$ defines both maps $ C^0(G,V)\rightarrow \Ind^G_1V  $ and its inverse. One can check that both maps are continuous and $ G $-equivariant, hence the modules are homeomorphic.
	\end{proof}

	\begin{lem}\label{Frobenius}
		For two topological $ G $-modules $ V $ and $ W $, one has the following isomorphism: \[\Hom_G^{\cts}(V,\Ind^G_1W)\cong \Hom^{\cts}(V,W), \]
		where $ \Hom^{\cts}(V,W) $ are continuous group homomorphisms and on the left hand side, we take continuous $ G $-equivariant group homomorphisms.
	\end{lem}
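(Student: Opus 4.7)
The plan is to write down mutually inverse continuous-$\Hom$ maps and reduce the continuity verifications to the exponential law for the compact-open topology, which was the ingredient already used in Lemma \ref{Cn=C0}.

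More concretely, I would define $\Phi\colon \Hom_G^{\cts}(V,\Ind^G_1 W)\to \Hom^{\cts}(V,W)$ by evaluation at the identity, $\varphi\mapsto [v\mapsto \varphi(v)(1)]$, and $\Psi\colon \Hom^{\cts}(V,W)\to\Hom_G^{\cts}(V,\Ind^G_1 W)$ by $\psi\mapsto[v\mapsto [g\mapsto \psi(gv)]]$. Since $H=1$, there is no equivariance condition to impose on the function $g\mapsto\psi(gv)$, so it lies in $C(G,W)=\Ind^G_1 W$ as soon as it is continuous. Verifying that $\Phi$ and $\Psi$ are mutually inverse is a direct unwinding of definitions: for $\psi\in\Hom^{\cts}(V,W)$ we get $\Phi(\Psi(\psi))(v)=\psi(1\cdot v)=\psi(v)$, and for $\varphi\in\Hom_G^{\cts}(V,\Ind^G_1 W)$ we use $G$-equivariance of $\varphi$ together with the $G$-action $(gf)(g')=f(g'g)$ on $\Ind^G_1 W$ to compute $\Psi(\Phi(\varphi))(v)(g)=\varphi(gv)(1)=(g\varphi(v))(1)=\varphi(v)(g)$. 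The same calculation shows that $\Psi(\psi)$ is $G$-equivariant.

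The real point is continuity. For $\Phi(\varphi)$ it is immediate: evaluation at the identity $C(G,W)\to W$ is continuous for the compact-open topology, so $\Phi(\varphi)$ is a composition of continuous maps. For $\Psi(\psi)$ one needs to know that the set-theoretic map $V\to C(G,W)$, $v\mapsto[g\mapsto \psi(gv)]$, is continuous. This is where I would appeal to the exponential law for the compact-open topology, exactly as in the proof of Lemma \ref{Cn=C0} (Bourbaki X.3.4, Corollaire 2): since $G$ is locally compact Hausdorff, a map $V\to C(G,W)$ is continuous if and only if its adjoint $V\times G\to W$ is continuous. The adjoint here is the composition $V\times G\xrightarrow{\mathrm{act}}V\xrightarrow{\psi}W$, which is continuous because the $G$-action on $V$ is continuous by assumption and $\psi$ is continuous.

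The main obstacle is thus nothing more than making sure the hypotheses for the exponential law are in place; once $G$ is locally compact Hausdorff (which is automatic for locally profinite $G$) and $V$ is an honest topological $G$-module so that $G\times V\to V$ is jointly continuous, the argument goes through cleanly and no further analytic input is required.
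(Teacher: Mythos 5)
Your proof is correct. Note that for this lemma the paper itself gives no argument: it simply cites Casselman--Wigner (Lemma 2 of \cite{CW}), so what you have done is supply the standard proof behind that citation -- the evaluation-at-the-identity map $\varphi\mapsto[v\mapsto\varphi(v)(1)]$ and its inverse $\psi\mapsto[v\mapsto[g\mapsto\psi(gv)]]$, which is exactly the unit/counit computation for Frobenius reciprocity, and all of your verifications (mutual inversion, $G$-equivariance of $\Psi(\psi)$ using the action $(gf)(g')=f(g'g)$, continuity of $\Phi(\varphi)$ via continuity of evaluation at a point for the compact-open topology) check out. One small refinement: the only direction of the exponential law you actually need is currying, i.e.\ that joint continuity of the adjoint $V\times G\rightarrow W$, $(v,g)\mapsto\psi(gv)$, implies continuity of $V\rightarrow C(G,W)$ for the compact-open topology; this direction holds for arbitrary topological spaces (a tube-lemma argument over the compact set in a subbasic open $\Omega(K,U)$), so the local compactness of $G$ that you invoke, while certainly available for a locally profinite group, is not even needed -- it is only the reverse implication (uncurrying) that requires $G$ locally compact Hausdorff. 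With that caveat your argument is complete and self-contained, which is arguably a plus over the paper's bare reference.
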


	\begin{proof}
		\cite[Lemma 2]{CW}.
	\end{proof}

	\begin{lem}
		The complex $ 0\rightarrow V\rightarrow C^\bullet(G,V)  $ is an exact complex of $ G $-modules.
	\end{lem}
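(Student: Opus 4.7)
The plan is to prove exactness by constructing a contracting homotopy. Since exactness is a statement about the underlying complex of abelian groups, the homotopy does not need to be $G$-equivariant, only continuous.

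Let $\varepsilon\colon V\hookrightarrow C^0(G,V)$ be the augmentation $v\mapsto[g\mapsto v]$. For each $n\ge 0$, I would define maps $s^n\colon C^{n+1}(G,V)\to C^n(G,V)$ and $s^{-1}\colon C^0(G,V)\to V$ by inserting the identity element in the first slot:
\[ s^n(f)(g_0,\dots,g_n):=f(1,g_0,\dots,g_n), \qquad s^{-1}(f):=f(1). \]
Each $s^n$ is well-defined because $(g_0,\dots,g_n)\mapsto(1,g_0,\dots,g_n)$ is continuous, so $s^nf$ is continuous as a composition. Note that $s^n$ is in general \emph{not} $G$-equivariant.

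Next I would verify the standard homotopy identities
\[ s^{-1}\circ\varepsilon=\mathrm{id}_V,\qquad \varepsilon\circ s^{-1}+s^0\circ d^0=\mathrm{id}_{C^0(G,V)},\qquad d^{n-1}\circ s^{n-1}+s^n\circ d^n=\mathrm{id}_{C^n(G,V)}\ (n\ge 1). \]
The first is immediate from the definitions. The remaining identities follow from a direct calculation with the alternating-sum formula for $d^n$: writing out $(s^n d^n f)(g_0,\dots,g_n)=(d^n f)(1,g_0,\dots,g_n)$, the $i=0$ term equals $f(g_0,\dots,g_n)$ while the higher terms cancel against the expansion of $(d^{n-1}s^{n-1}f)(g_0,\dots,g_n)=\sum_i(-1)^i f(1,g_0,\dots,\widehat{g_i},\dots,g_n)$ (up to the sign shift produced by deleting the inserted $1$).

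These identities imply that $\varepsilon$ is injective and that $\ker d^n=\mathrm{im}\,d^{n-1}$ (respectively $=\mathrm{im}\,\varepsilon$ for $n=0$), which is exactly the exactness of the augmented complex $0\to V\to C^\bullet(G,V)$. The only step requiring care is the index bookkeeping in the homotopy identity; this is purely combinatorial and poses no essential obstacle.
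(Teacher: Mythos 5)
Your proof is correct and is essentially the same as the paper's: both construct the contracting homotopy $f\mapsto f(1,-,\dots,-)$ (insertion of the identity in the first slot), check continuity via composition, and verify the standard homotopy identities to conclude exactness. The only cosmetic difference is your index convention and your explicit remark that the homotopy need not be $G$-equivariant, which the paper leaves implicit.
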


	\begin{proof}
		Let $ C^\bullet $ be the complex with $ C^{-1}=V $, $ C^i=0 $, for $ i\le -2 $ and $ C^i=C^i(G,V) $ for $ i\ge 0 $. 
		To prove the exactness of this complex, we construct a cochain homotopy 
		 $ s^n:C^n\rightarrow C^{n-1} $, $ n\in\Z $ between $ id_{C^\bullet} $ and the zero map on $ C^\bullet $. Define $ (s^nf)(g_1,\dots , g_n):= f(1,g_1,\dots,g_n) $ for $ f\in C^n(G,V) ,\ n\ge 0$ and $ s^n=0 $ for $ n\le -1 $. Then $ s^nf :G^{n}\rightarrow V$ is a continuous map, since it is the composition of the continuous maps $ f $ and $ \{1\}\times G^n\hookrightarrow G\times G^n $. Moreover, one can easily check that it satisfies $ s^{n+1}d^n+d^{n-1}s^n=id_{C^n} $ for all $ n $. Hence we have found the desired homotopy and the complex is exact.
	\end{proof}

	\begin{defi}
		The $ i^\text{th} $ continuous group cohomology group of $ G $ with coefficients in $ V $ is defined to be $ H^i(G,V):= H^i(C^\bullet(G,V)^G) $.
	\end{defi} 
	
	A homomorphism of topological $ G $-modules $ \phi:V\rightarrow W $ is said to be a strong morphism, if the induced morphisms $ \Ker \phi\rightarrow V $ and $ V/\Ker\phi\rightarrow W $ each have a continuous left inverse in the category of topological abelian groups. And if one has a short exact sequence of topological $ G $-modules \[ 0\rightarrow V\rightarrow W \rightarrow U\rightarrow 0, \] in which all the morphisms are strong, then the induced sequence \[ 0\rightarrow C^n(G,V)\rightarrow C^n(G,W)\rightarrow C^n(G,U)\rightarrow 0  \] is again strong and then induces a long exact sequence in cohomology. 
	
	For example, any short exact sequence\[ 0\rightarrow V\rightarrow W \rightarrow U\rightarrow 0, \]of topological $ G $-modules is automatically strong if $ U $ is discrete. Therefore it  induces a long exact sequence in cohomology. 
	
	\begin{defi}
		We say that a topological $ G $-module $ V $ is acyclic (for the continuous group cohomology), if  \[ H^i(G,V)=\begin{cases}
		V^G, \text{ if } i=0\\
		0,\text{ otherwise}.
		\end{cases}  \]
	\end{defi}
	
		\begin{rem}
			\begin{enumerate}
				\item The $ G $-modules $ C^n(G,V) $ are acyclic for the continuous group cohomology. Hence, the complex $ 0\rightarrow V\rightarrow C^\bullet(G,V) $ is an acyclic resolution of $ V $. (cf. \cite[p. 201]{CW}.)
				\item A topological $ G $-module $ V $ is said to be continuously injective, if for every strong $ G $-injection $ U\hookrightarrow W $ and for every morphism of topological $ G $-modules $ \phi:U\rightarrow V $, the morphism $ \phi $ extends to a morphism from $ W $ to $ V $.
				An example of continuously injective $ G $-modules are the modules $ C^n(G,V) $ (cf. \cite[p. 201]{CW}).
				\item If we equip a smooth representation $ \pi\in\Mod $ with the discrete topology, this will give a topological $ G $-module and we can define the continuous group cohomology groups $ H^i(G,\pi) $ with coefficients in $ \pi $.
			\end{enumerate}
		\end{rem}
	
	\begin{lem}\label{wlog center acts trivially}
		Let $ V $ be a topological $ G $-module which also has the structure of a topological $ R $-module for some topological ring $ R $, such that the $ G $-action is $ R $-linear. Assume that an element $ z\in Z(G) $ in the center of $ G $ acts on $ V $ by multiplication with a scalar $ \lambda\in R $. If $ 1-\lambda$ is a unit in $ R $, then $ H^i(G,V)=0 $ for all $ i\ge 0 $.
	\end{lem}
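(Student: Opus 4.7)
The plan is to exploit the principle that central elements act trivially on group cohomology. Multiplication by $\lambda$ on $V$ is $R$-linear, continuous, and $G$-equivariant (the latter by $R$-linearity of the $G$-action), so it induces an endomorphism $[\lambda]$ of $H^i(G,V)$. If I can show $[\lambda]=\mathrm{id}$, then $(1-\lambda)$ annihilates $H^i(G,V)$; since $1-\lambda\in R^\times$ and $H^i(G,V)$ inherits an $R$-module structure from $V$, this forces the cohomology to vanish.

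The key observation is that on $G$-invariant continuous cochains, multiplication by $\lambda$ is realized geometrically by left translation by $z$ in the arguments. For $f\in C^n(G,V)^G$, the $G$-equivariance of $f$ together with $z\cdot v=\lambda v$ gives
\[ \lambda\,f(g_0,\dots,g_n) \;=\; z\cdot f(g_0,\dots,g_n) \;=\; f(zg_0,\dots,zg_n). \]
Thus multiplication by $\lambda$ agrees, on $C^n(G,V)^G$, with the endomorphism $\mu_z^\ast$ defined by $(\mu_z^\ast f)(g_0,\dots,g_n)=f(zg_0,\dots,zg_n)$.

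It remains to produce a chain homotopy on $C^\bullet(G,V)^G$ between $\mu_z^\ast$ and the identity. I will set
\[ (s^n f)(g_0,\dots,g_{n-1}) \;=\; \sum_{i=0}^{n-1}(-1)^i\, f(g_0,\dots,g_i,z g_i,z g_{i+1},\dots,z g_{n-1}), \]
and verify that (a) $s^n f$ is continuous (immediate, since it is a sum of compositions of $f$ with continuous maps on $G^{n+1}$), (b) $s^n$ preserves $G$-invariance, which uses crucially that $z$ is central so that $z$ commutes past any left $G$-translation of the arguments, and (c) $d^{n-1}s^n + s^{n+1}d^n = \mu_z^\ast - \mathrm{id}$. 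Part (c) is the standard telescoping identity for the homogeneous bar complex, and I would confirm it by direct expansion (the low-degree case $n=1$ already exhibits the pattern: $(d^0s^1 f + s^2 d^1 f)(g_0,g_1) = f(zg_0,zg_1)-f(g_0,g_1)$).

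Passing to cohomology, (c) yields $\mu_z^\ast = \mathrm{id}$ on $H^i(G,V)$, i.e.\ $[\lambda] = \mathrm{id}$, so $(1-\lambda)H^i(G,V)=0$, and invertibility of $1-\lambda$ in $R$ gives $H^i(G,V)=0$ as claimed. The only step requiring care is the sign-bookkeeping in (c); conceptually the whole argument is an exact transcription of the classical proof, for discrete groups, that central elements act trivially on cohomology, and the continuous-cochain setting introduces no new difficulty because every map involved is manifestly continuous.
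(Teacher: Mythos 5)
Your proof is correct, but it takes a genuinely different route from the paper. You produce an explicit prism homotopy $s^\bullet$ on the homogeneous cochain complex between the identity and the operator $\mu_z^\ast f(g_0,\dots,g_n)=f(zg_0,\dots,zg_n)$, note that $s^\bullet$ is $G$-equivariant because $z$ is central (so it descends to $C^\bullet(G,V)^G$), and observe that on $G$-invariant cochains $\mu_z^\ast$ is exactly multiplication by $\lambda$; hence $1-\lambda$ kills $H^i(G,V)$ and invertibility of $1-\lambda$ forces vanishing. This is the classical ``central elements act trivially on cohomology'' argument transplanted to continuous cochains, and it is entirely self-contained: the only points needing verification are continuity of $s^nf$, $G$-equivariance of $s^n$, and the telescoping identity $d^{n-1}s^n+s^{n+1}d^n=\mu_z^\ast-\mathrm{id}$, all of which check out (your low-degree computation matches the general prism identity). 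The paper instead avoids any explicit homotopy: it extends the map $\mathrm{m}_z=\lambda\cdot\mathrm{id}$ on $V$ in two ways to endomorphisms of the strong resolution $0\to V\to C^\bullet(G,V)$ by continuously injective modules --- once by the $G$-action of $z$ on each $C^n(G,V)$, once by scalar multiplication by $\lambda$ --- and invokes the Hochschild--Mostow uniqueness-up-to-homotopy theorem for such resolutions to conclude that the difference is null-homotopic, which on $G$-invariants again says that $\mathrm{id}-\lambda$ induces zero on $H^i(G,V)$. Your version is more elementary and makes the homotopy explicit at the cost of some sign bookkeeping; the paper's version is shorter given that the machinery of strong resolutions and continuously injective modules has already been set up and cited.
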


\begin{proof}
	We consider the continuous $ R $-linear $ G $-module homomorphism \[ \m_z:V\rightarrow V,\ v\mapsto zv .\] We can extend it to a map of complexes
	\begin{equation*}
	\begin{xy}
	\xymatrix{
		0 \ar[r] &V \ar[d]^{\m_z}\ar[r] &C^0(G,V) \ar[d]^{\m_z}\ar[r]& C^1(G,V) \ar[d]^{\m_z}\ar[r] &\dots \\
		0 \ar[r] &V \ar[r] &C^0(G,V) \ar[r] &C^1(G,V) \ar[r] &\dots
	}
	\end{xy} 
	\end{equation*}
	where $ \m_z $ is the multiplication by $ z $. Since $ z $ is central, $ \m_z $ is $ G $-equivariant.
	
	By assumption, $ z $ acts on $ V $ by mutiplication with the scalar $ \lambda $, hence we can also construct the following map of complexes:
	 \begin{equation*}
	\begin{xy}
		\xymatrix{
			0 \ar[r] &V \ar[d]^{\lambda}\ar[r] &C^0(G,V) \ar[d]^{\lambda}\ar[r]& C^1(G,V) \ar[d]^{\lambda}\ar[r] &\dots \\
			0 \ar[r] &V \ar[r] &C^0(G,V) \ar[r] &C^1(G,V) \ar[r] &\dots
		}
	\end{xy} 
	\end{equation*}
	where the vertical arrows are given by multipication with $ \lambda $.
	
	Therefore, we have two continuous, $ G $-equivariant endomorphisms of the complex $ 0\rightarrow V\rightarrow C^\bullet(G,V) $ extending the map $ \m_z $. But this complex is a strong resolution of $ V $ by continuously injective $ G $-modules and by \cite[Section 2]{hochschild1962cohomology}, the map of $ \m_z $ extends uniquely up to homotopy. Hence, the map of complexes $ \m_z-\lambda $ is null-homotopic and induces the zero map on the cohomology groups $ H^i(G,V) $ for all $ i\ge 0 $. But passing to $ G $-invariants, the map $ \m_z-\lambda $ is just $ id_{C^\bullet(G,V)} - \lambda $. In conclusion, multiplication with $ 1-\lambda $ is the zero map on $ H^i(G,V) $, so that the continuous group cohomology groups $ H^i(G,V) $ must be zero if $ 1-\lambda $ is a unit in $ R $.
\end{proof}
	
	\section{Proof for profinite groups}
	Proposition \ref{compact open} below can already be found in Section 2.2 of \cite{Emerton}. To make the article as self-contained as possible we decided to include a proof.\\
	
	First, we consider the case of a compact locally profinite group $ K $, i.e. a profinite group, and a smooth representation $ \pi\in\ModK $ of $ K $ equipped with the discrete topology, so that we can define $ H^i(K,\pi) $ as above.
	Moreover, we denote by $ \1\in\ModK $ the free $ k $-module of rank 1 with trivial $ K $-action and $ \Ext^i_K(\1,\pi) $ the $ \Ext $-group computed in $ \ModK $, i.e. the $ i^\text{th} $ right derived functor of $ \Hom_K(\1,-) $ applied to $ \pi $.\\ 
	The following lemmas will be used in several arguments throughout the paper:
	\begin{lem}[{\cite[Proposition 2.3.10]{weibel1995introduction}}]\label{lemma:exact right adjoint preserves injectives}
		Let $ R:\mathcal{D}\rightarrow\mathcal{C}  $ be an additive functor and suppose that it is right adjoint to an exact functor $ L:\mathcal{C}\rightarrow\mathcal{D} $. Then the functor $ R $ preserves injective objects.
	\end{lem}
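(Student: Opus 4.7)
The plan is to unpack the definition of injectivity and exploit the adjunction to move the testing condition from $\mathcal{C}$ over to $\mathcal{D}$, where $I$ is assumed injective. Concretely, to show that $R(I)$ is injective in $\mathcal{C}$ I would take an arbitrary monomorphism $\iota: A \hookrightarrow B$ in $\mathcal{C}$ and a morphism $\varphi: A \to R(I)$, and aim to produce an extension $\tilde\varphi: B \to R(I)$ with $\tilde\varphi \circ \iota = \varphi$. Equivalently, I want to verify that the restriction map $\Hom_{\mathcal{C}}(B, R(I)) \to \Hom_{\mathcal{C}}(A, R(I))$ is surjective.

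The key step is to apply the adjunction isomorphism $\Hom_{\mathcal{C}}(-, R(I)) \cong \Hom_{\mathcal{D}}(L(-), I)$, which is natural in the first variable. This transforms the restriction map into $\Hom_{\mathcal{D}}(L(B), I) \to \Hom_{\mathcal{D}}(L(A), I)$ induced by $L(\iota): L(A) \to L(B)$. Since $L$ is exact, it preserves monomorphisms, so $L(\iota)$ is mono in $\mathcal{D}$. Injectivity of $I$ then gives surjectivity of $\Hom_{\mathcal{D}}(L(B), I) \to \Hom_{\mathcal{D}}(L(A), I)$, and transporting back along the adjunction yields the desired extension.

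There is really no serious obstacle here, only some routine bookkeeping: one has to check that the bijection given by the adjunction is natural enough that ``restriction along $\iota$'' on one side matches ``precomposition with $L(\iota)$'' on the other, which is built into the definition of an adjunction (i.e.\ the unit/counit formalism). For the application in the paper only the implication ``$L$ exact $\Rightarrow$ $R$ preserves injectives'' is needed, so I would not bother with the stronger statement that $R$ preserves injectives whenever $L$ merely preserves monomorphisms, although the argument above in fact gives this for free.
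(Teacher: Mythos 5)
Your argument is correct and is exactly the standard proof of this fact — the paper itself gives no proof but simply cites \cite[Proposition 2.3.10]{weibel1995introduction}, whose argument is the same adjunction-plus-exactness trick you describe. Nothing is missing; the naturality of the adjunction is indeed all the bookkeeping required.
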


	\begin{lem}\label{Vigneras Lemma on Ext groups}
		Let $ L:\mathcal{C}\rightarrow \mathcal{D} $ be an exact functor between abelian categories and suppose that $ L $ admits an exact right adjoint $ R:\mathcal{D}\rightarrow \mathcal{C} $. Assume that $ \mathcal{D} $ has enough injectives. Then there are natural (in $ C $ and $ D $) isomorphisms \[ \Ext^i_{\mathcal{D}}(L(C),D)\cong \Ext^i_{\mathcal{C}}(C,R(D)), \text{ for all }i\ge 0. \]
	\end{lem}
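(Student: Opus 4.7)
My plan is to compute both sides using a single injective resolution in $\mathcal{D}$, transported to $\mathcal{C}$ via $R$.

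First, since $\mathcal{D}$ has enough injectives, choose an injective resolution $D\hookrightarrow I^\bullet$ in $\mathcal{D}$. The right-hand side $\Ext^i_{\mathcal{D}}(L(C),D)$ is by definition $H^i(\Hom_{\mathcal{D}}(L(C),I^\bullet))$, where we use that $\Ext^i_{\mathcal{D}}(L(C),-)$ is computed as the $i$-th cohomology of $\Hom_{\mathcal{D}}(L(C),-)$ applied to an injective resolution of the second argument.

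Next, apply $R$ to this resolution to obtain a complex $R(D)\to R(I^\bullet)$ in $\mathcal{C}$. Because $R$ is assumed exact, $R(I^\bullet)$ is still a resolution of $R(D)$. Moreover, since $R$ is right adjoint to the exact functor $L$, Lemma \ref{lemma:exact right adjoint preserves injectives} implies that each $R(I^j)$ is injective in $\mathcal{C}$. Hence $R(D)\hookrightarrow R(I^\bullet)$ is an injective resolution of $R(D)$ in $\mathcal{C}$ (which in particular shows that $\mathcal{C}$ has enough injectives to make sense of $\Ext^i_{\mathcal{C}}(C,R(D))$ whenever $R(D)$ lies in the image, so the left-hand side is well defined as the cohomology of $\Hom_{\mathcal{C}}(C,R(I^\bullet))$).

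It remains to identify the two cochain complexes. The Hom-adjunction provides natural isomorphisms
\[ \Hom_{\mathcal{C}}(C,R(I^j)) \;\cong\; \Hom_{\mathcal{D}}(L(C),I^j) \]
for each $j\ge 0$, and these are compatible with the differentials because the adjunction isomorphism is natural in both variables. Taking $i$-th cohomology yields the desired isomorphism
\[ \Ext^i_{\mathcal{C}}(C,R(D)) \;\cong\; \Ext^i_{\mathcal{D}}(L(C),D), \]
and naturality in $C$ and $D$ follows from naturality of the adjunction together with the fact that any two injective resolutions of $D$ are canonically homotopy equivalent.

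The calculation is almost entirely formal; the only non-bookkeeping input is Lemma \ref{lemma:exact right adjoint preserves injectives}, which guarantees that transporting an injective resolution from $\mathcal{D}$ to $\mathcal{C}$ via $R$ lands in injectives. Thus the only potential obstacle would have been ensuring that $R$ sends an injective resolution to an injective resolution, and this is resolved by combining the exactness of $R$ with the preservation of injectives.
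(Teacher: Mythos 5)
Your proof is correct and follows essentially the same route as the paper: take an injective resolution $D\hookrightarrow I^\bullet$ in $\mathcal{D}$, use exactness of $R$ together with Lemma \ref{lemma:exact right adjoint preserves injectives} to see that $R(D)\hookrightarrow R(I^\bullet)$ is an injective resolution in $\mathcal{C}$, and then identify the Hom-complexes via the adjunction. The extra remarks on naturality and on the availability of an injective resolution of $R(D)$ are fine and only make explicit what the paper leaves implicit.
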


	\begin{proof}
		By assumption, $ \mathcal{D} $ has enough injectives, so that we can find an injective resolution $ D\hookrightarrow I^\bullet $ of $ D $ in $ \mathcal{D} $. Moreover, since both functors are exact and $ R $ is right adjoint to $ L $, applying $ R $ gives an injective resolution $ R(D)\hookrightarrow R(I^\bullet) $ of $ R(D) $ in $ \mathcal{C} $. We get\begin{align*}
			\Ext^i_{\mathcal{D}}(L(C),D)&=H^i(\Hom_{\mathcal{D}}(L(C),I^\bullet))\\
			&\cong H^i(\Hom_{\mathcal{C}}(C,R(I^\bullet))) \\
			&=\Ext^i_{\mathcal{C}}(C,R(D)).
		\end{align*}
	\end{proof}

	\begin{prop}\label{compact open}
		One has isomorphisms
		\begin{equation*}
		\Ext^i_K(\mathds{1},\pi)\cong H^i(K,\pi),
		\end{equation*}
		for all $ i\ge 0 $.
	\end{prop}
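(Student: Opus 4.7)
The strategy is to identify $0\to\pi\to C^\bullet(K,\pi)$ as a resolution of $\pi$ in the category $\ModK$ by objects which are acyclic for the functor $\Hom_K(\1,-)$. Granted this, we have
\[\Ext^i_K(\1,\pi)=H^i\bigl(\Hom_K(\1,C^\bullet(K,\pi))\bigr)=H^i\bigl(C^\bullet(K,\pi)^K\bigr)=H^i(K,\pi),\]
and the proposition follows. The proof therefore splits into two substantive verifications.

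First, I would check that each $C^n(K,\pi)$ actually lies in $\ModK$. Since $K$ is profinite, so is $K^{n+1}$; since $\pi$ carries the discrete topology, any continuous $f:K^{n+1}\to\pi$ has finite image and factors through $(K/L)^{n+1}$ for some open normal subgroup $L\triangleleft K$. Shrinking $L$ further so that it fixes the finitely many values of $f$ in $\pi$, a direct computation with the action $(kf)(k_0,\ldots,k_n)=kf(k^{-1}k_0,\ldots,k^{-1}k_n)$ shows that $L$ stabilizes $f$; hence $f$ is a smooth vector. The differentials $d^n$ and the augmentation $\pi\to C^0(K,\pi)$ are manifestly $K$-equivariant, and the exactness of the augmented complex proved earlier came from a $k$-linear cochain homotopy, so what we obtain is indeed a resolution of $\pi$ in $\ModK$.

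Next, I would show that each term of this resolution is $\Hom_K(\1,-)$-acyclic. Iterating Lemma \ref{Cn=C0} identifies $C^n(K,\pi)$ in $\ModK$ with $\Ind_1^K M_n$, where $M_n$ is $C^{n-1}(K,\pi)$ (or $\pi$, when $n=0$) regarded as a plain $k$-module. The forgetful functor $F:\ModK\to\mathrm{Mod}(k)$ is exact, and in the profinite/discrete setting $\Ind_1^K$ furnishes an exact right adjoint to $F$ (cf.\ Lemma \ref{Frobenius}, whose bijection is visibly $k$-linear). Feeding this adjoint pair into Lemma \ref{Vigneras Lemma on Ext groups} yields
\[\Ext^i_K\bigl(\1,\Ind_1^K M_n\bigr)\cong \Ext^i_k(k,M_n)=0\qquad\text{for every }i\ge 1,\]
since $k$ is projective (indeed free) over itself.

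The main obstacle is the smoothness verification in the first step: one has to use in an essential way that $K$ is profinite and $\pi$ discrete in order to conclude that continuous cochains have open stabilizer under the given $K$-action. Once this is settled, the remainder of the argument is essentially formal, relying on the general machinery of Lemmas \ref{Frobenius} and \ref{Vigneras Lemma on Ext groups} together with the standard fact that a resolution by acyclic objects computes derived functors.
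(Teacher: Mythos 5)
Your proposal is correct and follows essentially the same route as the paper: exhibit $0\to\pi\to C^\bullet(K,\pi)$ as a resolution in $\ModK$ (using that continuous cochains with values in a discrete module are smooth) whose terms are $\Hom_K(\1,-)$-acyclic via the exact adjunction of Lemma \ref{Frobenius} and Lemma \ref{Vigneras Lemma on Ext groups}, and then read off $\Ext^i_K(\1,\pi)\cong H^i(C^\bullet(K,\pi)^K)=H^i(K,\pi)$. The only cosmetic difference is that you check smoothness of cochains directly on $K^{n+1}$, whereas the paper does it for $C(K,V)$ and iterates using the discreteness of the compact-open topology.
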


	We split the proof into the following lemmas.\\
	
	For a topological $ K $-module $ V $, we define the space of smooth functions $ C^{\sm}(K,V)\subseteq C(K,V) $ to be: $$C^{\sm}(K,V):=\{f:K\rightarrow V\mid \exists H\le K\text{ open, s.t. } hf=f\ \forall h\in H\}.$$
	
	\begin{lem} \label{cts=sm}
		If V carries the discrete topology, then we have the following equality: \begin{align*}
		C(K,V)&=C^{\sm}(K,V).
		\end{align*}
	\end{lem}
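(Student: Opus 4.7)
The inclusion $C^{\sm}(K,V)\subseteq C(K,V)$ is already built into the definition, so the content of the lemma is the reverse inclusion. The plan is to show that any continuous $f\colon K\to V$ is stabilized under the $K$-action by some open subgroup $H\le K$, using the compactness of $K$ together with the fact that the $K$-action on $V$ is "smooth" because $V$ is discrete.

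First I would observe that, since $V$ is discrete, each preimage $f^{-1}(v)$ is open and closed in $K$, and since $K$ is compact the image $f(K)$ is finite, say $f(K)=\{v_1,\dots,v_n\}$. This reduces the problem to two independent smoothness conditions on $f$: one on its values and one on its arguments. For the values, continuity of the action $K\times V\to V$ combined with discreteness of $V$ implies that every stabilizer $\Stab_K(v_i)$ is open, so $H_1:=\bigcap_{i=1}^n \Stab_K(v_i)$ is an open subgroup fixing every value taken by $f$.

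Next I would produce an open subgroup $H_2$ under which $f$ is invariant under left translation of the argument. Since $K$ is profinite, for each $k\in K$ the open set $f^{-1}(f(k))$ contains $k$ and hence contains $U_k\cdot k$ for some open subgroup $U_k\le K$. By compactness of $K$, finitely many translates $U_{k_1}k_1,\dots,U_{k_m}k_m$ cover $K$; setting $H_2:=\bigcap_{j=1}^m U_{k_j}$ gives an open subgroup with the property that for any $k\in K$, writing $k=uk_j$ with $u\in U_{k_j}$, we have $h k = (hu)k_j\in U_{k_j}k_j$ for every $h\in H_2$, so $f(hk)=f(k_j)=f(k)$.

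Finally, setting $H:=H_1\cap H_2$, which is open as a finite intersection, one checks the action formula $(hf)(k)=hf(h^{-1}k)$: the $H_2$-part gives $f(h^{-1}k)=f(k)$, and the $H_1$-part gives $h\cdot f(k)=f(k)$, so $hf=f$ and $f\in C^{\sm}(K,V)$. The only real step of substance is the compactness argument producing $H_2$; everything else is formal manipulation of the smooth/discrete hypotheses.
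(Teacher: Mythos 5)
Your proof is correct and is essentially the paper's argument: both use discreteness plus compactness to get a finite image, cover $K$ by finitely many translates $U_{k_j}k_j$ on which $f$ is constant, and intersect these open subgroups with the (open) stabilizers of the values to obtain an open subgroup fixing $f$. Your only difference is organizational — splitting the subgroup into the value-stabilizing part $H_1$ and the translation-invariance part $H_2$ and verifying the action formula explicitly, which the paper leaves to the reader.
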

	
	\begin{proof}
		The inclusion $ C^{\sm}(K,V)\subseteq C(K,V) $ is trivial. Conversely, if $ f:K\rightarrow V $ is a continuous function, it is locally constant and, since $ K $ is compact, the image of $ f $ consists of finitely many elements $ v_1,\dots,v_n\in V $. Moreover, for each $ h\in K $, there is an open subgroup $ U_h\le K $, such that $ U_hh\subseteq f^{-1}(v_i) $ for some $ i $. We get \[  K=\underset{h\in K}{\bigcup } U_hh=\underset{i=1}{\overset{m}{\bigcup}}U_{h_i}h_i . \] Thus, $ U:=\underset{i}{\bigcap} U_{h_i}\cap \text{Stab}_K(f(h_i)) $ is an open subgroup of $ K $ with the property that for any $ u\in U $, $ uf=f$. 
	\end{proof}

	\begin{lem}
		If the topology of $ V $ is discrete, the compact-open topology on $ C(K,V) $ is discrete. 
	\end{lem}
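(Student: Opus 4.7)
The plan is to show directly that every singleton $\{f\} \subseteq C(K,V)$ is open in the compact-open topology, which forces the topology to be discrete. Recall that the compact-open topology is generated by the subbasic sets $W(C,U) := \{g \in C(K,V) : g(C) \subseteq U\}$ for $C \subseteq K$ compact and $U \subseteq V$ open. Since $V$ carries the discrete topology, every singleton $\{v\} \subseteq V$ is open, so each set $W(C,\{v\})$ is open in $C(K,V)$.

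Given $f \in C(K,V)$, I would first invoke Lemma \ref{cts=sm} (and the proof thereof) to note that $f$ is locally constant with finite image $\{v_1, \dots, v_n\}$. The preimages $C_i := f^{-1}(v_i)$ are then clopen in $K$, hence compact, and they partition $K$.

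I would then consider the finite intersection
\[
U := \bigcap_{i=1}^{n} W(C_i, \{v_i\}) = \{g \in C(K,V) : g(C_i) \subseteq \{v_i\} \text{ for all } i\},
\]
which is open in the compact-open topology and clearly contains $f$. The key observation is that because the $C_i$ exhaust $K$, any $g \in U$ must agree with $f$ on all of $K$, so $U = \{f\}$. This shows $\{f\}$ is open, and since $f$ was arbitrary, the topology on $C(K,V)$ is discrete.

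There is essentially no obstacle here; the proof is routine once Lemma \ref{cts=sm} is available to guarantee a finite clopen partition of $K$ adapted to $f$, and once one remembers that discreteness of $V$ makes $\{v\}$ a legal second argument in $W(C,\{v\})$.
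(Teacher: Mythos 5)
Your proof is correct and follows essentially the same argument as the paper: exhibit a finite clopen partition of $K$ on which $f$ is constant and intersect the corresponding compact-open subbasic sets to see that $\{f\}$ is open. The only cosmetic difference is that you partition $K$ by the level sets $f^{-1}(v_i)$, whereas the paper uses the cosets of an open subgroup under which $f$ is invariant.
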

	\begin{proof}
		Let $ f:K\rightarrow V $ be an element in $ C(K,V)=C^{\sm}(K,V) $, i.e. there exists a compact open subgroup $ H\le K $, such that $ f(gh)=f(g) $ for all $ h\in H,\ g\in K $. Since $ K $ is compact, $ K $ is the disjoint union of finitely many compact open cosets $ g_iH $, $ i=1,\dots, n $,  on which $ f $ is constant. Then $ \{f\}=\underset{i=1}{\overset{n}{\bigcap}}\Omega(g_iH,\{f(g_i)\}) $ is open as it is the finite intersection of the open sets \[  \Omega(g_iH,\{f(g_i)\}) =\{f'\in C(K,V)\mid \ f'(g_iH)\subseteq \{f(g_i)\}\} \] in $ C(K,V) $.
	\end{proof}

	\begin{lem}\label{acyclic resolution} 
		If $ \pi \in\ModK$ is a smooth representation of $ K$ equipped with the discrete topology, then
		$ 0\rightarrow \pi \rightarrow C^\bullet(K,\pi) $ is an acyclic resolution of $ \pi  $ in the category of smooth representations $ \ModK $.
	\end{lem}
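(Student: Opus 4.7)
The plan is to establish three things in sequence: (i) each $C^n(K,\pi)$ lies in $\ModK$, so the complex is a complex in the category of smooth representations; (ii) the augmented complex $0\to\pi\to C^\bullet(K,\pi)$ is exact in $\ModK$; and (iii) each $C^n(K,\pi)$ is acyclic for the functor $(-)^K=\Hom_K(\1,-)$, i.e.\ $\Ext^i_K(\1,C^n(K,\pi))=0$ for all $i>0$. These three together exhibit $C^\bullet(K,\pi)$ as an $\Ext$-computing resolution of $\pi$, which is the form of acyclicity needed to deduce Proposition \ref{compact open}.

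For (i), I would argue by induction on $n$. For $n=0$, since $\pi$ is discrete and $K$ is compact, Lemma \ref{cts=sm} gives $C^0(K,\pi)=C(K,\pi)=C^\sm(K,\pi)$, and its proof explicitly produces, for each $f$, an open subgroup of $K$ stabilizing $f$; hence $C^0(K,\pi)\in\ModK$. For the inductive step I would invoke the homeomorphism $C^{n+1}(K,\pi)\cong C^0(K,C^n(K,\pi))$ of Lemma \ref{Cn=C0} together with the preceding lemma of this section, which asserts that $C(K,V)$ is discrete whenever $V$ is, so that Lemma \ref{cts=sm} applies at each stage. For (ii), the exactness of $0\to\pi\to C^\bullet(K,\pi)$ as a complex of abelian groups was already proved via the explicit cochain contraction $s^\bullet$; since the forgetful functor $\ModK\to\MOD(k)$ is exact and faithful, the same sequence is exact in $\ModK$.

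The substantive step is (iii). Iterating Lemma \ref{Cn=C0} produces an isomorphism in $\ModK$
\[
C^n(K,\pi)\cong \Ind_1^K W_n,
\]
where $W_n$ is a $k$-module (take $W_0=\pi$ viewed as a plain $k$-module, and inductively let $W_{n+1}$ be the underlying $k$-module of $C^n(K,\pi)$). The smooth version of Lemma \ref{Frobenius} exhibits $\Ind_1^K\colon\MOD(k)\to\ModK$ as the right adjoint of the exact forgetful functor, and $\Ind_1^K$ is itself exact, since for $K$ profinite
\[
\Ind_1^K W = C^\sm(K,W) = \Lim_H W^{K/H}
\]
is a filtered colimit of finite products. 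Applying Lemma \ref{Vigneras Lemma on Ext groups} with $L$ the forgetful functor and $R=\Ind_1^K$ then yields
\[
\Ext^i_K(\1,\Ind_1^K W_n)\cong \Ext^i_k(\1,W_n),
\]
which vanishes for $i>0$ since $\1=k$ is free, hence projective, over itself.

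The main obstacle is the topological bookkeeping in (i): checking that every stage of the iteration $C^0(K,-)$ stays within $\ModK$, and that the homeomorphism of Lemma \ref{Cn=C0} intertwines the two $K$-actions well enough for the identification with $\Ind_1^K(-)$ to hold as an isomorphism in $\ModK$. Once these verifications are dispatched, step (iii) reduces to formal homological algebra via the adjunction machinery already assembled in Lemmas \ref{lemma:exact right adjoint preserves injectives} and \ref{Vigneras Lemma on Ext groups}.
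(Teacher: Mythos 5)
Your proposal is correct and follows essentially the same route as the paper's proof: smoothness of each $C^n(K,\pi)$ from the two preceding lemmas (discreteness plus $C(K,V)=C^{\sm}(K,V)$), exactness carried over from the cochain contraction, and acyclicity by viewing $C^n(K,\pi)\cong C(K,C^{n-1}(K,\pi))\cong\Ind_1^K W_n$ as the value of an exact right adjoint to the exact forgetful functor and applying Lemma \ref{Vigneras Lemma on Ext groups} to reduce to $\Ext^i_k(k,-)=0$. The only differences are expository: you spell out the induction on $n$ and justify the exactness of $\Ind_1^K$ via the filtered-colimit description, which the paper simply asserts.
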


	\begin{rem}
		If we say that a topological $ K $-module is acyclic, we mean that it is acyclic for the continuous group cohomology. Whereas acyclic in $ \ModK $ means acyclic with respect to $ \Ext^i_K(\1,-) $.
	\end{rem}
	
	\begin{proof}
		We know that the complex is exact and that $ C^n(K,\pi)$ is a smooth representation by the previous two lemmas. The representations $C^n(K,\pi)\cong C(K,C^{n-1}(K,\pi)) $ are acyclic in the category $ \ModK $: Indeed, since $V:=C^{n-1}(K,\pi)  $ is again discrete, by Frobenius reciprocity, one has $ \Hom_K(\1,C(K,V))\cong \Hom_k(k,V)\cong V $. Moreover, the functor $ C(K,-) $ is exact and we can apply Lemma \ref{Vigneras Lemma on Ext groups} to get $ \Ext^i_K(\1,C(K,V))\cong \Ext^i_k(k,V) $, which is zero for $ i>0 $ and $ V $ for $ i=0 $, as expected.
	\end{proof}

	\begin{proof}[Proof of Proposition \ref{compact open}]
		By Lemma \ref{acyclic resolution}, we can compute the groups $ \Ext^i_K(\1,\pi) $ by using the acyclic resolution \[  0\rightarrow \pi \rightarrow C^\bullet(K,\pi).\] More precisely, it is the $ i^\text{th} $ cohomology group of the complex \begin{equation*}
		\Hom_K(\1,C^\bullet(K,\pi)) \cong C^\bullet(K,\pi)^K
		\end{equation*}
		and this is the same as the continuous group cohomology group $ H^i(K,\pi) $. This proves the proposition.
	\end{proof}

	\section{The general case}
	
	If $ G $ is locally compact, but not compact, then the modules $ C^\bullet(G,\pi) $ will not lie in $ \Mod $. 
For example, if $ G=(\Q_p,+) $, then the indicator function on the set $ \bigcup_{n\ge 1}(p^{-n}+p^n\Z_p) $ is locally constant and hence continuous. But it is not smooth, since its stabilizer consists of $ x\in\Q_p $, such that $ x\in p^n\Z_p $ for all $ n\ge 1 $, and hence is zero, which is not open.\\
	
	To conclude in this case, we will use the following functor from the category $ \mod $ of $ k $-representations of $ G $ to the category $ \Mod $ \begin{align*}
	(-)^{\sm}:\mod \rightarrow \Mod, \\
	V\mapsto V^{\sm}:=\Lim_{K}V^K,
	\end{align*}
	where the direct limit is taken over the directed family of compact open subgroups $ K\le G $. Hence, $ V^{\sm} $ can be seen as the subrepresentation of $ V $ consisting of the smooth vectors. Moreover, this functor is right-adjoint to the inclusion $ \Mod\hookrightarrow \mod $, since $\Hom_G(W,V)= \Hom_{G}(W,V^{\sm}) $ for a smooth representation $ W\in\Mod $. Using Lemma \ref*{lemma:exact right adjoint preserves injectives}, this implies that the functor $ (-)^{\sm} $ preserves injective objects.
	
	\begin{lem}\label{computes H(K)}
		For a compact open subgroup $ K\le G $ and a smooth representation $ \pi\in \Mod $, the $ G $-modules $
		 C^n(G,\pi) $ are acyclic for the continuous group cohomology $ H^\bullet(K,-) $. 
		 
		Moreover, the cohomology of the complex $ C^\bullet(G,\pi)^K $ is $ H^\bullet(K,\pi) $.
	\end{lem}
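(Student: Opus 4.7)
The plan is to prove both assertions via a coset-decomposition argument combined with the standard comparison of cohomological functors. The essential input is that $K$ is open in $G$, so $K\backslash G$ is a discrete coset space.

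First I would show that each $C^n(G,\pi)$ is acyclic for $H^\bullet(K,-)$. By Lemma~\ref{Cn=C0}, it suffices to treat $C^0(G,W)$ for an arbitrary topological $G$-module $W$, which we then specialize to $W=C^{n-1}(G,\pi)$. Choose coset representatives $T\subseteq G$ so that $G=\bigsqcup_{t\in T}Kt$, noting that each $Kt$ is open. Restriction of continuous functions, together with the homeomorphisms $C(Kt,W)\cong C(K,W)$ given by $f_t\mapsto[k\mapsto f_t(kt)]$, assembles into a homeomorphism $C^0(G,W)\cong\prod_{t\in T}C^0(K,W)$; checking the formula $(kf)(g)=kf(k^{-1}g)$ directly shows that this is an isomorphism of topological $K$-modules, with $K$ acting on each factor by the standard action on $C^0(K,W)$. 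Each factor is $K$-acyclic by \cite[p.~201]{CW}. Since $C^n(K,-)$ commutes with arbitrary products (by the universal property of products for continuous maps), $K$-invariants commute with products, and cohomology of cochain complexes of abelian groups commutes with products (products being exact in $\mathrm{Ab}$), we conclude $H^i(K,C^n(G,\pi))\cong\prod_{t\in T}H^i(K,C^0(K,W))=0$ for all $i>0$.

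For the second assertion, the resolution $0\to\pi\to C^\bullet(G,\pi)$ is a strong exact sequence of topological $G$-modules, via the homotopy $s^n$ constructed earlier, and remains so when restricted to $K$; its terms are $K$-acyclic by the previous step. A standard double-complex comparison --- applying $C^\bullet(K,-)$ termwise to $C^\bullet(G,\pi)$ and evaluating the cohomology of the resulting first-quadrant double complex in both directions --- then identifies $H^\bullet(K,\pi)$ with the cohomology of the invariants complex $C^\bullet(G,\pi)^K$. The main obstacle I anticipate is cleanly verifying the product decomposition $C^0(G,W)\cong\prod_{t\in T}C^0(K,W)$ at the level of topological $K$-modules, and then justifying each of the interchanges of products with $C^n(K,-)$, with $K$-invariants, and with cohomology in the topological setting.
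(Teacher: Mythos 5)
Your proof is correct and follows essentially the same route as the paper: both exploit that $K$ is open, so the coset space is discrete, to identify $C^n(G,\pi)$ as a $K$-module with (a product of) modules of the form $C(K,W)$, which are $K$-acyclic by Casselman--Wigner, and both then invoke the fact that a strong resolution by acyclic $K$-modules computes $H^\bullet(K,-)$ (the paper cites \cite[Proposition 1]{CW}, whose proof is exactly your double-complex comparison). The only cosmetic difference is that you phrase the decomposition as $\prod_{t}C(K,W)$ over coset representatives, requiring the (valid) interchange of cohomology with topological products, whereas the paper packages the same data as a single module $C(K,C(G/K,\pi))$ via a continuous section of $G\rightarrow G/K$.
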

	
	\begin{proof}
		A proof is given in \cite[Proposition 4 (a)]{CW}. The idea is as follows: since the quotient $ G/K $ is discrete, one has a continuous section $ s:G/K\rightarrow G $ of the natural projection $ G\rightarrow G/K $ with which one can define the $ K $-invariant homeomorphism $ K\times G/K\rightarrow G,\ (h,gK)\mapsto s(gK)h $. Therefore, one gets an isomorphism of $ K $-modules $$ C(G,\pi)\cong C(K\times G/K,\pi)\cong C(K,C(G/K,\pi)) $$ showing that $ C(G,\pi) $ is acyclic as a $ K $-module.
		
		To conclude that the complex $ C^\bullet(G,\pi)^K $ computes the cohomology of $ K $, one also has to use that the resolution $ \pi\hookrightarrow C^\bullet(G,\pi) $ is a strong resolution of acyclic $ K $-modules and hence can be used to compute the cohomology of $ K $ (\cite[Proposition 1]{CW}). 
	\end{proof}

	\begin{prop}
		Let $ \pi\in \Mod $ be a smooth $ k $-representation of $ G $. Applying the functor $ (-)^{\sm} $ to the complex $ C^\bullet(G,\pi) $ gives a resolution $ 0\rightarrow\pi\rightarrow C^\bullet(G,\pi)^{\sm} $ of $ \pi  $ in $ \Mod $.
	\end{prop}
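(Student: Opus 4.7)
The plan is as follows. Each $C^n(G,\pi)^{\sm}$ belongs to $\Mod$ by construction of the smooth-vectors functor, so the only content of the statement is exactness of the augmented complex $0 \to \pi \to C^\bullet(G,\pi)^{\sm}$. I would reduce this to a vanishing statement by using that $(-)^{\sm} = \Lim_K (-)^K$ over compact open subgroups $K \le G$. Since filtered colimits of $k$-modules are exact, the cohomology of $C^\bullet(G,\pi)^{\sm}$ equals $\Lim_K H^i(C^\bullet(G,\pi)^K)$, which by Lemma \ref{computes H(K)} coincides with $\Lim_K H^i(K,\pi)$. In degree zero one obtains $\Lim_K \pi^K = \pi^{\sm} = \pi$, matching the augmentation, so the whole statement reduces to showing
\[
\Lim_K H^i(K,\pi) = 0 \quad \text{for all } i \ge 1.
\]

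To prove this vanishing, I would take a class $\xi \in H^i(K,\pi)$ represented by a continuous cocycle $f \colon K^{i+1} \to \pi$. Because $\pi$ is discrete and $K^{i+1}$ is compact, $f$ is locally constant with finite image. The smoothness of $\pi$ together with the uniform continuity of $f$ then lets me choose an open normal subgroup $K' \triangleleft K$ such that $f$ is right $(K')^{i+1}$-invariant and such that all of its (finitely many) values are fixed by $K'$. Restricting to $(K')^{i+1}$, every argument collapses modulo $K'$ to the identity, so $f|_{(K')^{i+1}}$ is the constant cochain with value $c := f(1,\dots,1) \in \pi^{K'}$. A direct computation of the coboundary of a constant cochain then shows that if $i$ is odd the cocycle condition forces $c=0$, while for $i$ even the constant $(i-1)$-cochain with value $c$ is $K'$-invariant (since $K'$ acts trivially on $\pi^{K'}$) and its coboundary equals $c$. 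In either case $[f|_{(K')^{i+1}}]$ vanishes in $H^i(K',\pi)$, and via the quasi-isomorphism of Lemma \ref{computes H(K)} this means that $\xi$ restricts to zero in the cohomology of $C^\bullet(G,\pi)^{K'}$, so it dies in the colimit.

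The main obstacle will be the vanishing of $\Lim_K H^i(K,\pi)$ in positive degrees. Once the formal reduction via exactness of filtered colimits is in place, the rest is essentially cocycle manipulation, but this manipulation depends critically on the interplay between smoothness of $\pi$ and compactness of $K^{i+1}$: together they force every cocycle to become a constant cochain after restriction to a small enough subgroup, and the observation that constant cochains valued in $\pi^{K'}$ are automatically $K'$-invariant is exactly what keeps the coboundary witness inside the complex of $K'$-invariants.
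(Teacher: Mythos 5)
Your proposal is correct and follows essentially the same route as the paper: reduce, via exactness of filtered colimits in $\Mod$ and Lemma \ref{computes H(K)}, to showing $\Lim_K H^i(K,\pi)=0$ for $i\ge 1$, and then kill each class by restricting to a sufficiently small compact open subgroup, using that a continuous cocycle valued in a discrete smooth $\pi$ is uniformly locally constant. Your explicit constant-cochain computation (the odd/even case analysis with the coboundary witness in $\pi^{K'}$) simply makes precise the paper's terse assertion that the locally constant cocycle dies upon restriction, so it is a faithful, slightly more detailed version of the same argument.
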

	
	\begin{proof}
		We want to show that the cohomology $ H^i(C^\bullet(G,\pi)^{\sm}) $ vanishes for $ i>0 $. By definition, $ C^n(G,\pi)^{\sm} $ is the direct limit over all compact open subgroups $ K \le G$ of $ K $-fixed vectors $ C^n(G,\pi)^K $. Note that taking direct limits in $ \Mod $ is exact and hence commutes with taking cohomology of a complex. In particular, we get $$ H^i(C^\bullet(G,\pi)^{\sm}) \cong \Lim_{K} H^i(C^\bullet(G,\pi)^K) . $$
		By Lemma \ref{computes H(K)}, this is the same as $ \Lim_{K} H^i(K,\pi)$. Now, this direct limit vanishes, since any element in $ \Lim_{K} H^i(K,\pi)$ comes from an element $ f\in H^i(K,\pi) $ for some compact open subgroup $ K\le G $. By Lemma \ref{cts=sm}, this $ f $ is represented by a smooth, hence locally constant cocycle, hence it vanishes when restricted to some compact open subgroup $ K'\le K $. But since the transition maps in the direct limit are given by restrictions, this element is zero in the direct limit. Hence, $ H^i(C^\bullet(G,\pi)^{\sm}) \cong \Lim_{K} H^i(C^\bullet(G,\pi)^K)=0 $ and $ C^\bullet(G,\pi)^{\sm} $ gives indeed a resolution of $ \pi $. 
	\end{proof}

	\begin{lem} \label{ExtG=Extk}
		For all $ \pi ,\ V\in \Mod$, we have isomorphisms \[ \Ext^i_G(\pi, C^n(G,V)^{\sm})\cong \Ext^i_k(\pi,C^{n-1}(G,V)), \text{ for all }n\ge 0,\] where the $ \Ext $-group on the right hand side is computed in the category of $ k $-modules and $ C^{-1}(G,V) $ is defined to be $ V $.
		
		In particular, $ \Ext^i_G(\1,C^n(G,V)^{\sm})=0 $ for all $ i\ge 1 $.
	\end{lem}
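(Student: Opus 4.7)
The plan is to apply Lemma \ref{Vigneras Lemma on Ext groups} to the adjoint pair consisting of the forgetful functor $F\colon\Mod\to k\text{-Mod}$ and the smooth induction from the trivial subgroup
\[ \Ind_1^{G,\sm}W:=\{f\colon G\to W\mid \exists\, H\le G \text{ open compact with } f(g'h)=f(g') \text{ for all } g'\in G,\, h\in H\}, \]
equipped with the $G$-action by right translation. Both functors are exact: $F$ trivially, and $\Ind_1^{G,\sm}$ because exactness of a sequence of $k$-modules is tested pointwise and the smoothness condition (right-invariance by some compact open subgroup) is preserved. Moreover, the category of $k$-modules has enough injectives.

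The first step is to identify $C^n(G,V)^{\sm}\cong \Ind_1^{G,\sm} C^{n-1}(G,V)$, with the convention $C^{-1}(G,V):=V$. Iterating Lemma \ref{Cn=C0} yields a $G$-equivariant homeomorphism $C^n(G,V)\cong \Ind_1^G C^{n-1}(G,V)$, and an element $f\in \Ind_1^G W$ has open stabilizer in $G$ (i.e.~is a smooth vector) if and only if it is right-invariant by some compact open subgroup of $G$, which is exactly the defining condition of $\Ind_1^{G,\sm}W$.

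The second step is Frobenius reciprocity for smooth induction from the trivial subgroup: for $\pi\in\Mod$ and a $k$-module $W$, the assignment $\phi\mapsto [v\mapsto \phi(v)(1)]$ gives a natural isomorphism $\Hom_G(\pi,\Ind_1^{G,\sm}W)\cong \Hom_k(\pi,W)$, with inverse sending $\psi$ to $v\mapsto [g\mapsto \psi(g^{-1}v)]$; smoothness of the image $[g\mapsto \psi(g^{-1}v)]$ follows from $v$ having an open stabilizer in $\pi$. This establishes the adjunction $F\dashv \Ind_1^{G,\sm}$, and Lemma \ref{Vigneras Lemma on Ext groups} then delivers
\[ \Ext^i_G(\pi,C^n(G,V)^{\sm})\cong \Ext^i_G(\pi,\Ind_1^{G,\sm}C^{n-1}(G,V))\cong \Ext^i_k(\pi,C^{n-1}(G,V)). \]

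For the final assertion, $\1=k$ is a free $k$-module of rank one, hence projective, so $\Ext^i_k(\1,W)=0$ for every $i\ge 1$ and every $k$-module $W$, which immediately yields $\Ext^i_G(\1,C^n(G,V)^{\sm})=0$ for $i\ge 1$. The main technical point is the identification $(\Ind_1^G W)^{\sm}\cong \Ind_1^{G,\sm}W$ together with the smooth Frobenius reciprocity; both are routine variants of Lemma \ref{Frobenius}, but one should take care that the $G$-action twists appearing in the proof of Lemma \ref{Cn=C0} interact correctly with the smoothness condition.
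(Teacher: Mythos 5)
Your argument is correct and takes essentially the same route as the paper: both identify $C^n(G,V)^{\sm}$ with the smooth vectors of $\Ind_1^G C^{n-1}(G,V)$ via Lemma \ref{Cn=C0}, verify that this smooth-induction functor is an exact right adjoint of the (exact) forgetful functor to $k$-modules, apply Lemma \ref{Vigneras Lemma on Ext groups}, and conclude the vanishing from the projectivity of $\1=k$ over $k$. One cosmetic remark: with the paper's convention $(gf)(g')=f(g'g)$ the inverse in Frobenius reciprocity should send $\psi$ to $v\mapsto[g\mapsto\psi(gv)]$ (your formula $\psi(g^{-1}v)$ matches the left-translation convention), a harmless adjustment that does not affect the argument.
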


	\begin{proof}
			By Lemma \ref{Cn=C0}, we have an isomorphism $ C(G,-)^{\sm}\cong (\Ind^G_1(-))^{\sm}$. By Frobenius reciprocity, we have \[ \Hom_{G}(\pi,C(G,V)^{\sm})\cong  \Hom_{G}(\pi,(\Ind_1^G(V))^{\sm})\cong \Hom_k(\pi,V), \] for any $ \pi, V\in \Mod. $
			In particular, the functor $ C(G,-)^{\sm} $ is right adjoint to the restriction functor which is exact. Moreover, $ C(G,-)^{\sm} $ is left-exact, and it is even exact. Indeed, if $ U\overset{\alpha}{\twoheadrightarrow} W $ is a surjection in $ \Mod $, then the induced map $ C(G,U)^{\sm}\rightarrow C(G,W)^{\sm} $ is also surjective. To see this, note that for any smooth function $ f:G\rightarrow W $, there is a compact open subgroup $ K $ of $ G $, such that $ f $ is constant on cosets $ Kg $. Then we can choose a set of representatives $ g_i ,\ i\in I$ of $ K\backslash G $ and choose for each $ g_i $ an element $ u_i\in U $, such that $ \alpha(u_i)=f(g_i) $ and define a map in $ C(G,U)^{\sm} $ by sending an element in the coset $ Kg_i $ to the element $ u_i $. This will map to $ f $.
			
			Hence, we can apply Lemma \ref{Vigneras Lemma on Ext groups} to get isomorphisms \[ \Ext^i_G(\pi, C(G,V)^{\sm})\cong\Ext^i_k(\pi,V). \]
			The claim follows from the description $ C^n(G,V)\cong C(G,C^{n-1}(G,V)) $ from Lemma \ref{Cn=C0}.
			 		
			 		Moreover, if $ \pi=\1 $, then $ \Hom_k(\1,I^\bullet)\cong I^\bullet $, thus $$\Ext^i_G(\1,C^n(G,V)^{\sm})\cong \Ext^i_k(\1,C^{n-1}(G,V))=0 $$ for all $ i>0. $
	\end{proof}
	
	\begin{cor} \label{main result}
		Let $ \pi\in\Mod $. We have isomorphisms of cohomology groups \begin{equation} \label{isomorphism}
		\Ext^i_G(\mathds{1},\pi)\cong H^i(G,\pi),
		\end{equation}
		for all $ i\ge 0 $.
	\end{cor}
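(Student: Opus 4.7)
The plan is to assemble the two pieces just established into a standard derived-functor computation. By the preceding proposition, $0\to\pi\to C^\bullet(G,\pi)^{\sm}$ is an exact complex in $\Mod$, so it is a resolution of $\pi$. By Lemma \ref{ExtG=Extk}, each term $C^n(G,\pi)^{\sm}$ satisfies $\Ext^i_G(\1, C^n(G,\pi)^{\sm}) = 0$ for $i\ge 1$, i.e.\ the resolution is acyclic for the functor $\Hom_G(\1,-)$. Since $\Mod$ has enough injectives, acyclic resolutions may be used to compute right derived functors, so
\[ \Ext^i_G(\1,\pi) \cong H^i\bigl(\Hom_G(\1, C^\bullet(G,\pi)^{\sm})\bigr). \]

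Next I would identify the right-hand complex with $C^\bullet(G,\pi)^G$. The functor $\Hom_G(\1,-)$ on $\Mod$ is simply $(-)^G$, and the key observation is that any $G$-fixed vector in $C^n(G,\pi)$ is automatically smooth (its stabilizer is all of $G$, which is open). Thus $(C^n(G,\pi)^{\sm})^G = C^n(G,\pi)^G$, and the complex above is literally $C^\bullet(G,\pi)^G$, whose cohomology is by definition $H^i(G,\pi)$.

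Combining the two displayed isomorphisms yields the claim. The only step requiring care, and which I expect to be the main subtle point, is the identification $(C^n(G,\pi)^{\sm})^G = C^n(G,\pi)^G$; once this is noted, everything reduces to the formal machinery of Lemmas \ref{Vigneras Lemma on Ext groups} and \ref{ExtG=Extk} applied to the resolution from the preceding proposition. No further calculation is needed.
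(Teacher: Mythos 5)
Your argument is correct and follows the paper's proof essentially verbatim: use the preceding proposition and Lemma \ref{ExtG=Extk} to see that $C^\bullet(G,\pi)^{\sm}$ is an acyclic resolution of $\pi$ in $\Mod$, then identify $(C^\bullet(G,\pi)^{\sm})^G$ with $C^\bullet(G,\pi)^G$, whose cohomology is $H^i(G,\pi)$ by definition. Your justification of the identification (a $G$-fixed cochain has open stabilizer, namely all of $G$, hence is smooth) is exactly the point the paper leaves implicit, and it is correct.
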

	
	\begin{proof}
		Just note that $ (C^\bullet(G,\pi)^{\sm})^G \cong C^\bullet(G,\pi)^G$ and by Lemma \ref{ExtG=Extk}, $ C^\bullet(G,\pi)^{\sm} $ is an acyclic resolution of $ \pi $ in $ \Mod $, hence $ (C^\bullet(G,\pi)^{\sm})^G $ computes $ \Ext^i_G(\1,\pi) $, whereas the cohomology in degree $ i $ of $ C^\bullet(G,\pi)^G $ is of course $ H^i(G,\pi) $.
	\end{proof}
	
	\section{Mod $ p $ representations of $ \GL_2(\Q_p) $} \label{section apllication}
	From now on, let $ G=\GL_2(\Q_p) $, let $ k $ be a finite field of characteristic $ p $. Let $ Z $ be the center of $ G $, $ \1 \in\Mod$ be the trivial representation, $ \Sp $ be the Steinberg representation and let $ \omega:\Q_p^\times\rightarrow k^\times $ be the character $ \omega(x)=x\vert x\vert \ (\mathrm{mod}\ p)$, where the absolute value $ \lvert - \rvert $ is normalized so that $ \lvert p \rvert=1/p $. This character induces a representation of the subgroup $ B\le G $ of upper triangular matrices, $ \alpha:B\rightarrow k^\times,  $ defined by $ \begin{pmatrix}
	a&b\\0& d
	\end{pmatrix}\mapsto \omega(a)\omega(d)^{-1} $. And we denote by $ \Ind_B^G\alpha $ the induced representation, given by continuous functions $ f:G\rightarrow k $ with $ f(bg)=\alpha(b)f(g) $ for all $ b\in B $ and $ g\in G $. 
	
	Moreover, we fix a continuous character $ \zeta :Z\rightarrow \O^\times $ and we define the category $ \MOD^{\sm}_{G,\zeta}(k) $ to be the full subcategory of $ \MOD^{\sm}_{G}(k) $ consisting of representations on which $ Z $ acts by $ \zeta $ and we define the category $\MOD^{\lfin}_{G,\zeta} (k)  $ to be the full subcategory of $ \MOD^{\sm}_{G,\zeta}(k) $ consisting of those representations that are locally admissible, or equivalently, that are locally of finite length (cf \cite[Section 5.4]{paskColmez}). If $ \mathcal{O} $ is the ring of integers of some finite field extension $ L/\Q_p $, such that its residue field is equal to $ k $, then one can similarly define the category $\MOD^{\lfin}_{G,\zeta} (\mathcal{O})  $ and $\MOD^{\lfin}_{G,\zeta} (k)  $ forms a subcategory of it. Moreover, by \cite[Proposition 5.34]{paskColmez}, this category decomposes into a direct product of subcategories $$ \MOD^{\lfin}_{G,\zeta} (\mathcal{O})\cong\prod_{\mathcal{B}}\MOD^{\lfin}_{G,\zeta}(\mathcal{O})_\mathcal{B} .$$
	(Note that in \cite{paskColmez}, these subcategories are denoted by $ \MOD^{\lfin}_{G,\zeta}(\mathcal{O})^{\mathcal{B}}  $.) Therefore, as a subcategory, $ \MOD^{\lfin}_{G,\zeta}(k) $ will also decompose accordingly. These blocks $ \mathcal{B} $ are certain equivalence classes of irreducible representations in $ \MOD^{\sm}_{G,\zeta}(k) $. For a precise definition, see Section 5.5 in \cite{paskColmez}. Then, for such block $ \B $, the corresponding full subcategory $  \MOD^{\lfin}_{G,\zeta}(\mathcal{O})_\mathcal{B} $ of $ \MOD^{\lfin}_{G,\zeta}(\mathcal{O}) $ is given by those representations $ \pi\in \MOD^{\lfin}_{G,\zeta}(\mathcal{O}) $, such that all its irreducible subquotients lie in the block $ \B $.
	
	We can use the isomorphism \eqref{isomorphism} to prove that the continuous group cohomology group $ H^i(\SL(\Q_p),\pi) $ for some smooth irreducible representations $ \pi $ is trivial. 
	
	\begin{prop}\label{ext_Z=0}
		Let $ \pi \in\Mod$ be a smooth absolutely irreducible representation of $ G $, with central character $ \zeta_\pi $. Assume that for any character $ \chi:\Q_p^\times\rightarrow k^\times $, $ \pi\otimes \chi\circ\det $ is not isomorphic to $ \1 $, $ \Sp $ or $ \Ind_B^G\alpha $, then one has \begin{equation*}
		\Ext^i_{G,\zeta_\pi}(\Ind^G_{Z\SL(\Q_p)}\zeta_\pi,\pi)=0,
		\end{equation*}
		for all $ i\ge 0 $, where the $ \Ext $-group is computed in the category $ \MOD^{\sm}_{G,\zeta_\pi}(k) $ of smooth representations of $G $ with central character $ \zeta_\pi $.
	\end{prop}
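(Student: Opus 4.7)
The plan is to reduce the statement to Paškūnas's block decomposition of the category of locally finite representations. Since the determinant induces an isomorphism $G/Z\SL(\Q_p) \cong \Q_p^\times/(\Q_p^\times)^2$, which is a finite abelian $2$-group, the induced representation $W := \Ind^G_{Z\SL(\Q_p)} \zeta_\pi$ is finite-dimensional over $k$, and $\SL(\Q_p)$ acts trivially on it. After extending scalars to an algebraic closure $\overline{k}$ of $k$---which is harmless since $\Ext$ commutes with flat base change---$W$ admits a finite filtration as a $G$-representation whose successive quotients are characters of the form $\chi \circ \det$, where $\chi : \Q_p^\times \to \overline{k}^\times$ runs through the characters satisfying $\chi^2 = \zeta_\pi$ (the condition that ensures $\chi \circ \det$ has central character $\zeta_\pi$).

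The long exact sequences of $\Ext$ associated with this filtration reduce the claim to showing $\Ext^i_{G,\zeta_\pi}(\chi \circ \det, \pi) = 0$ for each such $\chi$ and every $i \ge 0$. Tensoring by $\chi^{-1} \circ \det$ is an exact equivalence $\MOD^{\sm}_{G,\zeta_\pi}(k) \xrightarrow{\sim} \MOD^{\sm}_{G,1}(k)$ preserving $\Ext$-groups and carrying $\chi \circ \det$ to $\1$, so the problem becomes
\[
\Ext^i_{G,1}\bigl(\1,\ \pi \otimes \chi^{-1} \circ \det\bigr) = 0 \quad \text{for all } i \ge 0.
\]
The twisted representation $\pi \otimes \chi^{-1} \circ \det$ is absolutely irreducible with trivial central character and, by the hypothesis on $\pi$, is not isomorphic to any of $\1$, $\Sp$, or $\Ind_B^G \alpha$.

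The final step uses the block decomposition $\MOD^{\lfin}_{G,1}(\mathcal{O}) \cong \prod_{\B} \MOD^{\lfin}_{G,1}(\mathcal{O})_\B$ of \cite[Proposition 5.34]{paskColmez}: the irreducible objects of the block containing $\1$ are precisely $\1$, $\Sp$, and $\Ind_B^G \alpha$. The hypothesis on $\pi$ therefore places $\pi \otimes \chi^{-1} \circ \det$ in a block distinct from that of $\1$, and $\Ext$-groups between objects lying in different blocks vanish. The main obstacle is the transition from Ext-vanishing in the locally finite subcategory to Ext-vanishing in $\MOD^{\sm}_{G,1}(k)$: one must verify that injective hulls of objects of $\MOD^{\lfin}_{G,1}(\mathcal{O})$ remain injective in the ambient smooth category, so that the $\Ext$-groups computed in the two categories coincide for locally finite arguments.
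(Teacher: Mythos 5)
Your proposal follows essentially the same route as the paper: filter the finite-dimensional representation $ \Ind^G_{Z\SL(\Q_p)}\zeta_\pi $ (after enlarging $ k $) by characters $ \chi\circ\det $, reduce via the long exact sequences to $ \Ext^i_{G,\zeta_\pi}(\chi\circ\det,\pi)=0 $, and conclude from Pa\v{s}k\={u}nas's block decomposition that a character and $ \pi $ lie in different blocks. The one step you flag as ``the main obstacle'' --- comparing $ \Ext $ computed in $ \MOD^{\lfin}_{G,\zeta_\pi} $ with $ \Ext $ computed in the ambient smooth category --- is not a new difficulty: it is precisely Corollary 5.17 of \cite{paskColmez}, which the paper cites at exactly this point, and your suggested verification (injective envelopes of locally finite objects staying injective, equivalently acyclic, for smooth $ \Ext $) is essentially how that result is obtained; so your argument is complete once you invoke it rather than leave it open. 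Two minor remarks: the twist by $ \chi^{-1}\circ\det $ to reduce to trivial central character is harmless but unnecessary, since the block decomposition is available in $ \MOD^{\lfin}_{G,\zeta_\pi} $ directly; and your assertion that the irreducible objects of the block of $ \1 $ are \emph{precisely} $ \1 $, $ \Sp $, $ \Ind_B^G\alpha $ is the statement for $ p\ge 5 $ --- for $ p=2,3 $ the block of a character looks different (Corollary 1.2 of \cite{pavskunas2014blocks}), but in every case it consists of character twists of the representations excluded by the hypothesis, which is all the argument needs.
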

	
	\begin{proof}
		Note that if $ p\neq 2 $, $ Z\SL(\Q_p) $ is of index 4 in $ G $ and if $ p=2 $ it is of index 8, so that the representation $ \Ind^G_{Z\SL(\Q_p)}\zeta_\pi $ is 4- or 8-dimensional. The action of $ \GL_2(\Q_p) $ on $ \Ind^G_{Z\SL(\Q_p)}\zeta_\pi $ factors through an abelian quotient, so that, after enlarging the field $ k $, we may assume that there exists a character 
		$ \eta_1 $, a 3-, respectively 7-dimensional representation $ \tau \in \Mod  $ and a short exact sequence of the form $$ 0\rightarrow \eta_1\circ\det \rightarrow \Ind^G_{Z\SL(\Q_p)}\zeta_\pi \rightarrow \tau \rightarrow 0.$$
		This induces the long exact sequence \begin{equation*}
		\dots \rightarrow \Ext^i_{G,\zeta_\pi}(\tau,\pi) \rightarrow\Ext^i_{G,\zeta_\pi}(\Ind^G_{Z\SL(\Q_p)}\zeta_\pi,\pi) \rightarrow \Ext^i_{G,\zeta_\pi}(\eta_1\circ \det,\pi) \rightarrow \dots
		\end{equation*}
		In particular, it is enough to show that $ \Ext^i_{G,\zeta_\pi}(\eta_1\circ \det, \pi)=0 $ and $ \Ext^i_{G,\zeta_\pi}(\tau, \pi)=0 $. Moreover, we can repeat this procedure and find $ \eta_2\circ\det \hookrightarrow\tau$ and so on. Inductively, it will be enough to show that $ \Ext^i_{G,\zeta_\pi}(\eta\circ \det, \pi)=0  $ for all such characters $ \eta $.
		
		The representation $ \eta\circ \det$ is clearly irreducible. Moreover, $ \pi $ and $ \eta\circ \det $ lie in the full subcategory $ \MOD^{\ladm}_{G,\zeta_\pi} (k) =\MOD^{\lfin}_{G,\zeta_\pi} (k)$ of $ \MOD^{\sm}_{G,\zeta_\pi}(k) $. Moreover, by Corollary 5.17 of \cite{paskColmez}, we have isomorphisms $ \Ext^i_{G,\zeta_\pi}(\eta\circ \det, \pi)\cong  \Ext^{\lfin,i}_{G,\zeta_\pi}(\eta\circ\det, \pi)$, meaning that we can compute the $ \Ext  $-groups in the category $  \MOD^{\lfin}_{G,\zeta_\pi} (k)$, which decomposes into a direct product of subcategories $$ \MOD^{\lfin}_{G,\zeta_\pi} (k)\cong\prod_{\mathcal{B}}\MOD^{\lfin}_{G,\zeta_\pi}(k)_\mathcal{B} $$ (\cite[Proposition 5.34]{paskColmez}). In particular, there are no extensions between representations lying in different blocks $ \mathcal{B} $. The blocks are described in Corollary 1.2 of \cite{pavskunas2014blocks} and by assumption, $ \pi $ does not lie in the same block of any character. Hence, $ \Ext^{\lfin,i}_{G,\zeta_\pi}(\eta_j\circ\det, \pi)=0 $ for all $ i\ge 0 $ and the claim follows.
	\end{proof}

	\begin{cor}	\label{H(SL2,pi)=0}
		Under the assumptions of Proposition \ref{ext_Z=0} we have
		\begin{equation*}
			H^i(\SL(\Q_p),\pi) =0,\ \forall i\ge 0.
		\end{equation*}
	\end{cor}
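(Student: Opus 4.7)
The strategy is to convert the vanishing provided by Proposition \ref{ext_Z=0} into the desired vanishing of $H^i(\SL(\Q_p),\pi)$ by a combination of Shapiro's lemma, Corollary \ref{main result}, and a Hochschild--Serre spectral sequence argument passing through $\mathrm{PSL}_2(\Q_p)$.

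First I would exploit that $Z\SL(\Q_p)$ has finite index in $G$, so restriction and (smooth = compact) induction form an adjoint pair of exact functors between the categories $\MOD^{\sm}_{G,\zeta_\pi}(k)$ and $\MOD^{\sm}_{Z\SL(\Q_p),\zeta_\pi}(k)$. Applying Lemma \ref{Vigneras Lemma on Ext groups} to this pair yields the Shapiro-type isomorphism
\[
\Ext^i_{G,\zeta_\pi}(\Ind^G_{Z\SL(\Q_p)}\zeta_\pi,\pi) \cong \Ext^i_{Z\SL(\Q_p),\zeta_\pi}(\zeta_\pi,\pi|_{Z\SL(\Q_p)}),
\]
so Proposition \ref{ext_Z=0} reduces the problem to showing that the right-hand side vanishes.

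Next I would use that for $\zeta_\pi$ to extend to a character of $Z\SL(\Q_p)$ that is trivial on $\SL(\Q_p)$, one must have $\zeta_\pi(-I)=1$. Twisting by $\zeta_\pi^{-1}$ then defines an exact equivalence $\MOD^{\sm}_{Z\SL(\Q_p),\zeta_\pi}(k)\cong\MOD^{\sm}_{\mathrm{PSL}_2(\Q_p)}(k)$ that sends $\zeta_\pi$ to $\1$ and $\pi|_{Z\SL(\Q_p)}$ to a smooth $\mathrm{PSL}_2(\Q_p)$-representation $\bar\pi$. Since the twist preserves $\Ext$, combining with Corollary \ref{main result} applied to the locally profinite group $\mathrm{PSL}_2(\Q_p)$ gives
\[
\Ext^i_{Z\SL(\Q_p),\zeta_\pi}(\zeta_\pi,\pi) \cong \Ext^i_{\mathrm{PSL}_2(\Q_p)}(\1,\bar\pi) \cong H^i(\mathrm{PSL}_2(\Q_p),\bar\pi),
\]
and the previous step forces all of these groups to vanish.

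Finally, to descend from $\mathrm{PSL}_2(\Q_p)$ to $\SL(\Q_p)$, I would invoke the Hochschild--Serre spectral sequence attached to $1\to\{\pm I\}\to\SL(\Q_p)\to\mathrm{PSL}_2(\Q_p)\to 1$. Since $-I$ acts trivially on $\pi$, each $H^q(\{\pm I\},\pi)$ is isomorphic as a $\mathrm{PSL}_2(\Q_p)$-representation to a direct sum of copies of $\bar\pi$, so every entry $E_2^{p,q}=H^p(\mathrm{PSL}_2(\Q_p),H^q(\{\pm I\},\pi))$ of the $E_2$-page already vanishes by the previous step, and the spectral sequence then gives $H^i(\SL(\Q_p),\pi)=0$. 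The most delicate point to verify carefully is the second step: checking that the twist-by-$\zeta_\pi^{-1}$ equivalence of categories really does intertwine the $\Ext$-groups with those computed on the $\mathrm{PSL}_2(\Q_p)$-side, and handling the residue characteristic $2$ case where $\{\pm I\}$-cohomology is not concentrated in degree zero.
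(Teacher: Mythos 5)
There is a genuine gap: you never justify the assumption, used in both your second and third steps, that $-I$ acts trivially on $\pi$ (equivalently $\zeta_\pi(-I)=1$). This is not implied by the hypotheses of Proposition \ref{ext_Z=0}: an absolutely irreducible (e.g.\ supersingular) $\pi$ may well have $\zeta_\pi(-1)\neq 1$ when $p$ is odd. In that case your route collapses completely: since $\SL(\Q_p)$ is perfect, every character of $Z\SL(\Q_p)$ is trivial on $\{\pm I\}$, so $\zeta_\pi$ does not extend to $Z\SL(\Q_p)$ at all, the twist to $\mathrm{PSL}_2(\Q_p)$ does not exist, and $H^q(\{\pm I\},\pi)$ is not a sum of copies of any $\mathrm{PSL}_2(\Q_p)$-representation. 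The paper disposes of exactly this case at the outset: if $-I$ acts by a scalar $\lambda\neq 1$, then $1-\lambda$ is a unit in the field $k$ and Lemma \ref{wlog center acts trivially} gives $H^i(\SL(\Q_p),\pi)=0$ directly; only after this reduction does the extension of $\zeta_\pi$ to $Z\SL(\Q_p)$ (and the interpretation of $\Ind^G_{Z\SL(\Q_p)}\zeta_\pi$) make sense. You need this reduction, or a substitute for it, before your argument even starts.

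Granting that reduction, your route is genuinely different from the paper's and more roundabout. The paper goes from $\Ext^i_{\SL(\Q_p)}(\1,\pi)$ to $\Ext^i_{Z\SL(\Q_p),\zeta_\pi}(\zeta_\pi,\pi)$ in one stroke, using that extending the $\SL(\Q_p)$-action to $Z\SL(\Q_p)$ via $\zeta_\pi$ is an exact left adjoint of the exact restriction functor (Lemma \ref{Vigneras Lemma on Ext groups}), and then applies the same Shapiro step to $G$ that you use; no passage through $\mathrm{PSL}_2(\Q_p)$ and no spectral sequence is needed. Your detour through $\mathrm{PSL}_2(\Q_p)$ instead requires a Hochschild--Serre spectral sequence for the finite central subgroup $\{\pm I\}\trianglelefteq\SL(\Q_p)$, whose quotient is \emph{not} discrete; the only such tool in the paper (the argument in Lemma \ref{contains compact implies finite}) covers open normal subgroups only. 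The sequence you want can be obtained as a Grothendieck spectral sequence in the smooth category, since inflation is an exact left adjoint to $(-)^{\{\pm I\}}$, but one must additionally identify $R^q(-)^{\{\pm I\}}$ with ordinary $H^q(\{\pm I\},-)$, i.e.\ check that injective smooth $\SL_2(\Q_2)$-representations are $\{\pm I\}$-acyclic in residue characteristic $2$ (for odd $p$ the functor is exact and the issue disappears). You flag this char-$2$ point but leave it, together with the missing central-character reduction, unproved; as written the proposal therefore does not cover all cases of the corollary.
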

	\begin{proof}
		We may assume that $ Z\cap \SL(\Q_p) $ acts trivially on $ \pi $, since otherwise the continuous group cohomology groups $ H^i(\SL(\Q_p),\pi) $ are zero for all $ i\ge 0 $ by Lemma \ref{wlog center acts trivially}.
		By Corollary \ref{main result}, we know that $ H^i(\SL(\Q_p),\pi)\cong \Ext^i_{\SL(\Q_p)}(\1,\pi)$. 
		We can then consider the exact functor $ \MOD_{\SL(\Q_p),\1}^{\sm}(k)\rightarrow\MOD_{Z\SL(\Q_p),\zeta_\pi}^{\sm}(k) $, given by extending the action of $ \SL(\Q_p) $ to $ Z\SL(\Q_p) $, by letting $ Z $ act via the character $ \zeta_\pi $. This is left adjoint to the restriction functor which is also exact. In particular, by Lemma \ref{Vigneras Lemma on Ext groups}, we obtain an isomorphism $ \Ext^i_{\SL(\Q_p)}(\1,\pi)\cong \Ext^i_{Z\SL(\Q_p),\zeta_\pi}(\zeta_\pi,\pi)$ for all $ i\ge 0  $. The functor \[ \Ind_{Z\SL(\Q_p)}^G(-):\MOD_{Z\SL(\Q_p),\zeta_\pi}^{\sm}(k)\rightarrow \MOD^{\sm}_{G,\zeta_\pi}(k)\] is exact and has an exact right adjoint functor given by the restriction. Therefore, we can apply Lemma \ref{Vigneras Lemma on Ext groups} to conclude that  \[ \Ext^i_{Z\SL(\Q_p),\zeta_\pi}(\zeta_\pi,\pi)\cong\Ext^i_{G,\zeta_\pi}(\Ind^G_{Z\SL(\Q_p)}\zeta_\pi,\pi), \] which is zero by Proposition \ref{ext_Z=0}.
	\end{proof}
	
	\begin{rem}
		For any field extension $ l $ of $ k $, one can use Proposition 5.33 in \cite{paskColmez}, to see that the blocks are the same over $ l $. In particular, we have the same vanishing of $ \Ext $-groups stated in Corollary \ref{H(SL2,pi)=0}.
	\end{rem}
	
	\section{Banach space representations} \label{section banach reps}
	
	We want to apply Corollary \ref{main result} to Banach space representations of a $ p $-adic reductive group $ G $. More precisely, we let $ F/\Q_p $ be a finite extension and $ \G $ be a connected reductive group over $ F $. Then take $ G =\G(F)$ to be the group of its $ F $-rational points. Let $ L $ be a finite extension of $ \Q_p $ with ring of integers $ \O $, uniformizer $ \varpi $ and residue field $ k $. Denote $ \Badm_G(L) $ the category of admissible unitary $ L $-Banach space representations of $ G $. These are representations $ \Pi $ of $ G $ on $ L $-vector spaces which are complete with respect to a $ G $-invariant norm and which are admissible in the sense of Schneider--Teitelbaum \cite{schneider2002banach}. By Proposition 9.6 in \cite{SCHNEIDERp-adicBanach}, this means that there is an open bounded $ G $-invariant lattice $ \Pi^0 $ in $ \Pi $, such that the quotient $ \Pi^0/\varpi\Pi^0 $ is an admissible smooth representation in $ \MOD_G^{\sm}(\O/\varpi) $. Moreover, we have an isomorphism  \begin{equation}\label{decomp Pi into Pi^0}
	\Pi\cong (\plim_n\Pi^0/\varpi^n\Pi^0)[1/\varpi]. 
	\end{equation}
	
	Since the representation $ \Pi $ is a topological $ G $-module, we can consider its continuous group cohomology groups and we will show that the isomorphism \eqref{decomp Pi into Pi^0} induces an isomorphism in cohomology:
	\begin{equation} \label{Banach main result}
		H^i(G,\Pi)\cong (\plim_nH^i(G,\Pi^0/\varpi^n\Pi^0))[1/\varpi].
	\end{equation}
	
	The isomorphism \eqref{Banach main result} was proved for compact $ p $-adic analytic groups by Emerton in \cite[Proposition 1.2.19 and 1.2.20]{emerton2006interpolation}. 
	The main problem extending his proof is that, in general, projective limits do not commute with taking cohomology. Therefore we will first prove some finiteness properties on the cohomology groups.
	
	\subsection{Finiteness conditions}
	
	\begin{lem}[{\cite[Lemma 3.4.4]{Emerton}}] \label{compact implies finite}
		If $ K $ is a compact $ p $-adic analytic group, $ \pi $ an admissible representation in $ \MOD_K^{\sm}(\O/\varpi^n) $, $ n> 0 $, then the continuous group cohomology groups $ H^i(K,\pi) $ are finitely generated $ \O/\varpi^n $-modules for all $ i\ge 0 $.
	\end{lem}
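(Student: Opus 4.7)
The plan is to reduce, via Corollary \ref{main result}, to an Ext-group computation that can be carried out using Pontryagin duality with the Iwasawa algebra $\OnK$, and then to exploit the Noetherianity of $\OnK$ coming from Lazard's theorem.

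By Corollary \ref{main result}, $H^i(K,\pi)\cong \Ext^i_K(\1,\pi)$, where the Ext is computed in $\MOD^{\sm}_K(\O/\varpi^n)$. Pontryagin duality $V\mapsto V^\vee:=\Hom_{\O/\varpi^n}(V,\O/\varpi^n)$ is an anti-equivalence between $\MOD^{\sm}_K(\O/\varpi^n)$ and the category of pseudocompact $\OnK$-modules, under which admissible smooth representations correspond to finitely generated $\OnK$-modules and injective objects on one side to projective objects on the other. Since $K$ is a compact $p$-adic analytic group, Lazard's theorem ensures that $\OnK$ is Noetherian, so the finitely generated $\OnK$-module $\pi^\vee$ admits a resolution $P_\bullet\to \pi^\vee$ by finitely generated free $\OnK$-modules.

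Dualizing this resolution yields an injective resolution $\pi\hookrightarrow I^\bullet$ in $\MOD^{\sm}_K(\O/\varpi^n)$ with $I^i:=(P_i)^\vee$ admissible, and hence $\Ext^i_K(\1,\pi)=H^i((I^\bullet)^K)$. Each term is manageable: $(I^i)^K=((P_i)^\vee)^K=(P_i/\mathfrak{m} P_i)^\vee$, where $\mathfrak{m}$ is the augmentation ideal of $\OnK$, and $P_i/\mathfrak{m} P_i$ is finitely generated over $\OnK/\mathfrak{m}=\O/\varpi^n$; its Pontryagin dual is therefore also finitely generated (over the Artinian ring $\O/\varpi^n$, finitely generated is equivalent to cofinitely generated). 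Since $\O/\varpi^n$ is Noetherian, the cohomology of a complex of finitely generated $\O/\varpi^n$-modules is itself finitely generated, which gives the claim.

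The main technical obstacle is the careful setup of Pontryagin duality between $\MOD^{\sm}_K(\O/\varpi^n)$ and pseudocompact $\OnK$-modules: one must verify that the duality is exact with $\pi\cong(\pi^\vee)^\vee$ for admissible $\pi$, that it interchanges finite generation over $\OnK$ with admissibility of the smooth representation, and that the dual of a finitely generated free $\OnK$-module is indeed an injective object of $\MOD^{\sm}_K(\O/\varpi^n)$, so that the dualized resolution legitimately computes $\Ext^i_K(\1,\pi)$.
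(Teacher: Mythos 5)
Your proposal is correct and follows essentially the same route as the paper: identify $H^i(K,\pi)$ with $\Ext^i_K(\1,\pi)$, pass to the Pontryagin dual $\pi^\vee$, use that $\OnK$ is Noetherian (Lazard) to get a resolution by finitely generated free $\OnK$-modules, dualize it to an injective resolution in $\MOD^{\sm}_K(\O/\varpi^n)$, and observe that the $K$-invariants of its terms are finitely generated over $\O/\varpi^n$. Your explicit identification $((P_i)^\vee)^K\cong (P_i/\mathfrak{m}P_i)^\vee$ is a nice way of making the last step concrete, and the duality facts you flag as needing verification are exactly the ones the paper quotes from Emerton and Kohlhaase.
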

	
	\begin{proof} 
		Let $ \pi $ be admissible representation on an $ \O/\varpi^n $-module. 
		Then, by Lemma 2.2.11 in \cite{EmertonEins}, its Pontryagin dual $ \pi^{\vee}=\Hom_\O^{\cts}(\pi,L/\O)$ is a finitely generated $ \OnK $-module (for more details see Corollary 1.8 and Proposition 1.9.(i) in \cite{Kohlhaase}) and since $ K $ is compact $ p $-adic analytic, the ring $ \OnK $ is Noetherian, by \cite[Theorem 2.1.1]{EmertonEins}. Hence, we can find a resolution $ F_\bullet \twoheadrightarrow \pi^{\vee} $ by free $ \OnK $-modules of finite rank. Then, taking the dual, we will get an injective resolution $ \pi \hookrightarrow {(F_\bullet)}^{\vee}$ of $ \pi $ in $ \MOD^{\sm}_K(\O/\varpi^n) $ 
		and the $ K $-invariants $ ({{F_i}^{\vee}})^K $ are finitely generated as $ \O/\varpi^n $-modules. Then, since $ \OnK $ is Noetherian, using Proposition \ref{compact open} we deduce that \[ H^i(K,\pi)\cong \Ext^i_K(\1,\pi)=H^i(({(F_\bullet)^{\vee}})^K) \] is finitely generated over $ \O /\varpi^n$ for all $ i $.
	\end{proof}
		
	\begin{lem} \label{contains compact implies finite}
		Let $ H $ be a locally profinite group and let $ \pi\in\MOD_H^{\adm}(\O/\varpi^n) $ be an admissible representation of $ H $. Assume that $ H $ contains an open normal subgroup $ N\le H $ such that \begin{itemize}
			\item $ H^i(N,\pi) $ is a finitely generated $ \O/\varpi^n $-module for all $ i\ge 0 $;
			\item the quotient group $ H/N $ is either finite or a finitely generated abelian group.
		\end{itemize}
		Then the $ \O/\varpi^n $-modules $ H^i(H,\pi) $ are finitely generated for all $ i\ge 0 $.
	\end{lem}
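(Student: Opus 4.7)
The plan is to set up a Hochschild--Serre spectral sequence
\[
E_2^{p,q} = H^p(H/N, H^q(N,\pi)) \Longrightarrow H^{p+q}(H,\pi)
\]
and then deduce finite generation of the abutment from finite generation of the $E_2$-page, where the hypotheses on the quotient $H/N$ will do the work.

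To construct the spectral sequence, I would realise it as a Grothendieck spectral sequence for the factorisation $(-)^H = (-)^{H/N} \circ (-)^N$ of the invariants functor on $\MOD^{\sm}_H(\O/\varpi^n)$. Two adjunctions are the key inputs. First, the restriction functor $\MOD^{\sm}_H(\O/\varpi^n) \to \MOD^{\sm}_N(\O/\varpi^n)$ has the exact left adjoint $\cind_N^H$ (well-defined because $N$ is open in $H$), so it preserves injectives; resolving $\pi$ by injectives in $\MOD^{\sm}_H(\O/\varpi^n)$ and then applying $(-)^N$ therefore computes $\Ext^q_N(\1, \pi) \cong H^q(N,\pi)$ by Corollary \ref{main result}, and the resulting object inherits the residual $H/N$-action that the spectral sequence requires. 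Second, the inflation functor $\MOD^{\sm}_{H/N}(\O/\varpi^n) \to \MOD^{\sm}_H(\O/\varpi^n)$ (well-defined because $N$ is open, so vectors with trivial $N$-action have open stabiliser in $H$) is exact and left adjoint to $(-)^N$, so $(-)^N$ preserves injectives, and Grothendieck's theorem produces the spectral sequence. Since $N$ is open, $H/N$ is discrete, $\MOD^{\sm}_{H/N}(\O/\varpi^n)$ is the category of all $(\O/\varpi^n)[H/N]$-modules, and $R^p((-)^{H/N})$ is plain group cohomology of $H/N$.

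Next, I would check that every $E_2^{p,q}$ is finitely generated over $\O/\varpi^n$. By hypothesis $M_q := H^q(N,\pi)$ is, so it suffices to show that $H^p(Q,M)$ is finitely generated over $\O/\varpi^n$ whenever $M$ is, where $Q = H/N$ is as in the two cases. If $Q$ is finite, the bar resolution realises $H^p(Q,M)$ as a subquotient of $M^{|Q|^p}$, hence finitely generated. If $Q$ is finitely generated abelian, I decompose $Q \cong \Z^d \oplus T$ with $T$ finite and apply Lyndon--Hochschild--Serre for the extension $\Z^d \subseteq Q$; the Koszul resolution of $\Z$ over $\Z[\Z^d]$ presents $H^p(\Z^d, M)$ as a subquotient of $M^{\binom{d}{p}}$, and iterating with the finite case for $T$ keeps everything finitely generated.

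Finally I would assemble the conclusion. The ring $\O/\varpi^n$ is Noetherian (even Artinian local), so finitely generated modules form a Serre subcategory, closed under subquotients and finite extensions. The spectral sequence is first quadrant, hence convergent, and each $H^n(H,\pi)$ carries a finite filtration whose graded pieces are subquotients of the finitely many $E_\infty^{p,q}$ with $p+q = n$, themselves subquotients of the corresponding $E_2^{p,q}$; all of these are finitely generated, so $H^n(H,\pi)$ is too. The step I expect to be most delicate is the verification that inflation and restriction are exact adjoints in the smooth categories so that the Grothendieck spectral sequence is legitimate; once these adjunctions are secured, the rest is essentially bookkeeping.
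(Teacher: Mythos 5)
Your proposal is correct and follows essentially the same route as the paper: interpret continuous cohomology as $\Ext$ in the smooth category via Corollary \ref{main result}, factor the invariants functor through $(-)^N$ to get the Hochschild--Serre spectral sequence, show the $E_2$-terms are finitely generated using the finiteness hypotheses on $H/N$, and conclude from the finite filtration of the abutment over the Noetherian ring $\O/\varpi^n$. The only cosmetic difference is that you establish finite generation of $H^p(H/N,M)$ by explicit bar and Koszul resolutions, whereas the paper appeals to the Noetherianness of the group ring $\O/\varpi^n[H/N]$ together with the triviality of the $H/N$-action on the spectral sequence terms.
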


	\begin{proof}
		Again, by our main result \ref{main result}, we can compute the continuous group cohomology as the Ext group in the category $ \MOD^{\sm}_H(\O/\varpi^n)  $ of smooth representations over $ \O/\varpi^n $. Since the functor of taking $ H $-invariants is the composition of the functors \[ \MOD^{\sm}_H(\O/\varpi^n) \overset{(-)^{N}}{\longrightarrow}  \MOD^{\sm}_{H/N}(\O/\varpi^n) \overset{(-)^{H/N}}{\longrightarrow}  \MOD(\O/\varpi^n), \] we obtain a Hochschild-Serre spectral sequence \[ E^{i,j}_2=\Ext^i_{H/N}(\1,\Ext^j_N(\1,\pi))\implies \Ext^{i+j}_H(\1,\pi). \]
		Hence, we obtain the same for the continuous cohomology groups:
		\[ E^{i,j}_2=H^i(H/N,H^j(N,\pi))\implies H^{i+j}(H,\pi). \]
		Since this is a first quadrant spectral sequence, for fixed $ (i,j) $, the limit term is $ E^{i,j}_\infty=E^{i,j}_m$ for some $ m $. Moreover, all of the modules $ H^j(N,\pi) $ are finitely generated and since the quotient group $ H/N $ is discrete, $ H^i(H/N,H^j(N,\pi)) $ is the group cohomology of $ H^j(N,\pi) $.
		By assumption, the quotient group $ H/N $ is either finite or finitely generated and abelian. In both cases, the group ring $ \O/\varpi^n[H/N] $ is Noetherian and one can compute the cohomology groups $ H^i(H/N,H^j(N,\pi)) $ using resolutions consisting of finitely generated $ \O/\varpi^n[H/N] $-modules. Therefore, all of the terms $ E^{i,j}_m $ are finitely generated as $ \O/\varpi^n[H/N] $-modules and, since $ H/N $ acts trivially on each $ E^{i,j}_m $, they are also finitely generated as $ \O/\varpi^n $-modules. By definition of the convergence of a spectral sequence, there is a finite filtration of $ H^i(H,\pi) $ such that the graded pieces are isomorphic to one of these finitely generated modules $ E^{i,j}_\infty $ and hence, $ H^i(H,\pi) $ itself is also a finitely generated $ \O/\varpi^n $-module.
	\end{proof}
	
	\subsection{$ p $-adic reductive groups}
   
      	Let $ F $ be a finite field extension of $ \Q_p $, let $ \G $ be a connected reductive group over $ F $ and $ G=\G(F) $ its group of $ F $-rational points. Following the notations of \cite{schneider1997representation}, we denote by $ X $ the reduced Bruhat--Tits building associated to $ G $ and for any $ q\ge 0 $ we denote by $ X_q $ the set of all $ q $-dimensional facets in $ X $ and by \[ X^q:=\bigcup_{F\in X_q}\overline{F} \]the $ q $-skeleton. Then for any facet $ F\in X_q $, the relative homology group $ H_q(X^q,X^q\backslash F; \Z) $ is a free $ \Z $-module of rank 1 and we define the set of oriented $ q $-facets to be \[ X_{(q)}:=\{ (F,c) \vert F\in X_q,\ c\in H_q(X^q,X^q\backslash F; \Z)\text{ a generator} \}. \]
      	
      	Moreover, for a $ q $-facet $ F\in X_{q} $, we define $ P_F^\dagger $ to be the $ G $-stabilizer of $ F $. This group then acts on the homology group $ H_q(X^q,X^q\backslash F; \Z) \cong \Z$ and therefore, it acts via a character \[ \delta_F:P_F^\dagger\rightarrow \{ \pm 1 \}. \]
      	
      	Following Section II.1 in \cite{schneider1997representation}, we obtain a $ G $-equivariant resolution of $ \Z $\[ \dots \rightarrow H_1(X^1,X^0; \Z)\rightarrow H_0(X^0; \Z)=\bigoplus_{F\in X_0}\Z\rightarrow \Z \rightarrow 0, \] 
      	since it computes the singular cohomology of $ X $, which is trivial because $ X $ is contractible. Moreover the authors define $ G $-equivariant isomorphisms \[ C^{\oriented}_c(X_{(q)},\Z)\overset{\cong}{\rightarrow} H_q(X^q,X^{q-1}; \Z) ,\] where\begin{align*}
      	 C^{\oriented}_c(X_{(q)},\Z)=\left\{
     \omega:X_{(q)}\rightarrow \Z \ \middle\vert \begin{array}{l}  \omega \text{ has finite support}\\ \omega((F,-c))=-\omega((F,c))\text{ for any }(F,c)\in X_{(q)} \end{array}\right \}.
      	\end{align*}
      	
      	We fix for any $ q\ge 0 $, a set $ R_q$ containing exactly one representative from every $ G $-orbit in $ X_q $.
      	Let us now fix a $ q\ge 0 $ and write $ R_q=\{F_1,\dots,F_{s_q}\} $ and fix for any $ F_i$ an orientation $ c_i $, so that $ (F_i,c_i)\in X_{(q)} $.
      	\begin{lem}
      		For every $ q\ge 0 $, there are $ G $-equivariant isomorphisms \begin{align*}
	      		C^{\oriented}_c(X_{(q)},\Z)&\overset{\cong}{\longrightarrow} \bigoplus_{i=1}^{s_q} \cind_{P_{F_i}^\dagger}^G(\delta_{F_i}) \\
	      		\omega &\mapsto (\omega_i)_i,
      		\end{align*}
      		where $ \omega_i:G\rightarrow \Z $ is given by $ g\mapsto \omega((g^{-1}F_i,g^{-1}c_i)) $.
      	\end{lem}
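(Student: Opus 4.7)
The plan is to write down the map, verify the target sits in the stated compact induction, verify $G$-equivariance, and then construct an explicit inverse using the orbit/stabilizer decomposition of $X_q$. For each $i$ and $\omega\in C^{\oriented}_c(X_{(q)},\Z)$, set $\omega_i(g):=\omega((g^{-1}F_i,g^{-1}c_i))$. First I would check that $\omega_i\in\cind_{P_{F_i}^\dagger}^G(\delta_{F_i})$. For $p\in P_{F_i}^\dagger$ one has $p^{-1}F_i=F_i$ while $p^{-1}$ acts on the generator $c_i$ of $H_q(X^q,X^q\setminus F_i;\Z)\cong\Z$ by the sign $\delta_{F_i}(p^{-1})=\delta_{F_i}(p)$, so $\omega_i(pg)=\omega((g^{-1}F_i,\delta_{F_i}(p)g^{-1}c_i))=\delta_{F_i}(p)\omega_i(g)$ by antisymmetry of $\omega$. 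Compactness of the support modulo $P_{F_i}^\dagger$ is equivalent to finiteness of the image in $P_{F_i}^\dagger\backslash G$, and this follows since two elements $g,g'$ give the same facet $g^{-1}F_i=g'^{-1}F_i$ exactly when $g'g^{-1}\in P_{F_i}^\dagger$, combined with the finiteness of $\Supp(\omega)$.

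Next I would verify $G$-equivariance, which is essentially formal: with the action $(h\omega)((F,c))=\omega((h^{-1}F,h^{-1}c))$ on the source and $(h\omega_i)(g)=\omega_i(gh)$ on the target, a one-line calculation gives $(h\omega)_i(g)=\omega(((gh)^{-1}F_i,(gh)^{-1}c_i))=\omega_i(gh)=(h\omega_i)(g)$. For bijectivity I would fix, for each $i$, a set of representatives of $P_{F_i}^\dagger\backslash G$; using that $X_q=\coprod_{i=1}^{s_q}G\cdot F_i$, every $(F,c)\in X_{(q)}$ can be written uniquely as $(gF_i,\epsilon gc_i)$ for some $i\in\{1,\dots,s_q\}$, some representative $g$ of a coset in $P_{F_i}^\dagger\backslash G$, and some $\epsilon\in\{\pm1\}$. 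Given $(\omega_i)_i$ in the direct sum, define $\omega((F,c)):=\epsilon\,\omega_i(g^{-1})$; antisymmetry under $c\mapsto -c$ is built into the formula, and finiteness of the support of $\omega$ follows from the compact-mod-$P_{F_i}^\dagger$ support of each $\omega_i$ together with the finiteness of the index set.

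The main obstacle, and the place where the character $\delta_{F_i}$ is unavoidable, is to check that this assignment is independent of the chosen coset representative. If $(gF_i,\epsilon gc_i)=(g'F_i,\epsilon'g'c_i)$, put $p=g^{-1}g'\in P_{F_i}^\dagger$; then $pc_i=\delta_{F_i}(p)c_i$ forces $\epsilon'=\epsilon\,\delta_{F_i}(p)$, and using the transformation rule $\omega_i(p^{-1}g^{-1})=\delta_{F_i}(p)\omega_i(g^{-1})$ one computes $\epsilon'\omega_i(g'^{-1})=\epsilon\omega_i(g^{-1})$, so the formula is well defined. The verifications that the two constructions are mutually inverse and that the inverse map is $G$-equivariant are then direct unwindings of the definitions. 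This same choice of representatives identifies $C^{\oriented}_c(X_{(q)},\Z)$ summand-by-summand with $\cind_{P_{F_i}^\dagger}^G(\delta_{F_i})$, which is exactly the claim.
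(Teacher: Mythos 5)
Your proposal is correct, and it agrees with the paper's proof on the first half: the verification that each $\omega_i$ satisfies $\omega_i(hg)=\delta_{F_i}(h)\omega_i(g)$ (via antisymmetry of $\omega$ and $\delta_{F_i}(h^{-1})=\delta_{F_i}(h)$), that its support is compact modulo $P_{F_i}^\dagger$, and the one-line check of $G$-equivariance are essentially identical to the paper's. Where you diverge is bijectivity: the paper notes injectivity is clear and proves surjectivity by exhibiting preimages of a generating set, namely the cochains $\omega^{(i)}$ supported on $\{(F_i,c_i),(F_i,-c_i)\}$, whose images are the standard generators of the summands $\cind_{P_{F_i}^\dagger}^G(\delta_{F_i})$; you instead build an explicit two-sided inverse from the orbit decomposition $X_q=\coprod_i G\cdot F_i$, with the well-definedness check $\epsilon'\omega_i(g'^{-1})=\epsilon\,\omega_i(g^{-1})$ carrying the content. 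Your route is slightly longer but makes the role of the orientation character in gluing the two descriptions completely explicit, and it yields injectivity and surjectivity at once; the paper's route is shorter because it never needs coset representatives. One bookkeeping remark: since $gF_i=g'F_i$ iff $g^{-1}g'\in P_{F_i}^\dagger$, the ambiguity in writing $F=gF_i$ lives in $G/P_{F_i}^\dagger$ rather than $P_{F_i}^\dagger\backslash G$ as you state; this is harmless, because your independence-of-representative computation in fact shows the formula $\omega((F,c)):=\epsilon\,\omega_i(g^{-1})$ is well defined for \emph{any} $g$ with $(F,c)=(gF_i,\epsilon gc_i)$, which is all the construction needs. You also implicitly use that $P_{F_i}^\dagger$ is open in $G$, so that compactness of the support modulo $P_{F_i}^\dagger$ amounts to finiteness of its image in the discrete quotient; it would be worth saying this in one clause, though the paper is equally brief on this point.
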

      	
      	\begin{proof}
      		Since $ \omega $ has finite support, the support of each $ \omega_i $ is compact modulo the stabilizer $ P_{F_i}^\dagger $ and by definition, \[ \omega_i(hg)=\omega((g^{-1}h^{-1}F_i,g^{-1}h^{-1}c_i))=\omega((g^{-1}F_i,\delta_{F_i}(h^{-1})g^{-1}c_i))=\delta_{F_i}(h)\omega_i(g) ,\] for all $ h\in P_{F_i}^\dagger $, $ g\in G $. In particular, $ \omega_i $ defines an element of $ \cind_{P_{F_i}^\dagger}^G(\delta_{F_i}) $.
      		
      		The map is therefore well-defined and it is clearly injective. Moreover, it is $ G $-equivariant. Indeed, take any $ g,h\in G $, then  \[ (g\omega)_i(h)=(g\omega)((h^{-1}F_i,h^{-1}c_i))=\omega((g^{-1}h^{-1}F_i,g^{-1}h^{-1}c_i))=\omega_i(hg)=(g\omega_i)(h).  \]
      		
      		To prove surjectivity, it is enough to show that the image contains a set of generators of the $ G $-module $ \bigoplus_{i=1}^{s_q} \cind_{P_{F_i}^\dagger}^G(\delta_{F_i}) $. We consider, for every $ i\in\{1,\dots, s_q \} $, the map $ \omega^{(i)}\in  C^{\oriented}_c(X_{(q)},\Z)$, which is supported on the set $ \{(F_i,c_i),(F_i,-c_i)\} $ and maps $ (F_i,c_i) $ to $ 1 $. Then $ (\omega^{(i)})_j =0$ for $ i\ne j $ and $  (\omega^{(i)})_i\in \cind_{P_{F_i}^\dagger}^G(\delta_{F_i}) $ is supported on $ P_{F_i}^\dagger $ and $ (\omega^{(i)})_i(1)=1 $. These elements clearly generate $ \bigoplus_{i=1}^{s_q} \cind_{P_{F_i}^\dagger}^G(\delta_{F_i}) $ as $ G $-module, showing surjectivity.

      	\end{proof}
      	
      	We get therefore an exact resolution of $ \Z $ in $ \MOD^{\sm}_G(\Z) $: 
      	\begin{equation}\label{resolution of 1}
      	\dots\rightarrow\bigoplus_{F\in R_1}\cind^G_{P_F^\dagger}\delta_F \rightarrow\bigoplus_{F\in R_0}\cind^G_{P_F^\dagger}\delta_F\rightarrow \Z \rightarrow 0.
      	\end{equation}
      	
      	Note that, in general, the stabilizers $ P_F^\dagger $ are not compact. However, in \cite[Section 1.2]{schneider1997representation}, the authors construct a compact open normal subgroup $ R_F$ of $ P_F^\dagger$. Moreover, they show that $ ZR_F $ is of finite index in $ P_F^\dagger $, where $ Z $ denotes the $ F $-rational points of the connected center of $ \G $. 
      	
	\begin{lem}\label{reductive implis finite}
		Let $ G $ be a $ p $-adic reductive group, $ \pi\in \MOD^{\adm}_G(\O/\varpi^n) $, then $ H^i(G,\pi) $ is a finitely generated $ \O/\varpi^n  $-module.
	\end{lem}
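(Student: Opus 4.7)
The plan is to apply $\Hom_G(-,\pi)$ to the Schneider--Stuhler resolution \eqref{resolution of 1} (with coefficients in $\O/\varpi^n$), use Corollary \ref{main result} to rewrite the abutment of the associated hypercohomology spectral sequence as $H^\bullet(G,\pi)$, and then reduce the finiteness claim to finiteness of the cohomology of the facet stabilizers $P_F^\dagger$, which is handled by iterating Lemma \ref{contains compact implies finite}.

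First I would tensor \eqref{resolution of 1} with $\O/\varpi^n$ over $\Z$. Each term of \eqref{resolution of 1} is $\Z$-free (being the cellular chain group of the contractible building), so the tensored complex remains exact and is a resolution of $\O/\varpi^n$ in $\MOD_G^{\sm}(\O/\varpi^n)$ of finite length (as $X$ is finite-dimensional), with degree-$p$ term the finite direct sum $\bigoplus_{F \in R_p} \cind^G_{P_F^\dagger}(\delta_F \otimes \O/\varpi^n)$. Combining this resolution with an injective resolution of $\pi$ in a double complex yields the spectral sequence
\[ E_1^{p,q} = \bigoplus_{F \in R_p} \Ext^q_G(\cind^G_{P_F^\dagger}\delta_F,\pi) \Longrightarrow \Ext^{p+q}_G(\1,\pi) \cong H^{p+q}(G,\pi), \]
where the last isomorphism is Corollary \ref{main result}. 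Since $\cind^G_{P_F^\dagger}$ is exact and left adjoint to the exact restriction functor, Lemma \ref{Vigneras Lemma on Ext groups}, together with twisting by $\delta_F^{-1}$ and a further use of Corollary \ref{main result}, identifies each $\Ext^q_G(\cind^G_{P_F^\dagger}\delta_F,\pi)$ with $H^q(P_F^\dagger,\pi\otimes\delta_F^{-1})$.

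This reduces the problem to showing each $H^q(P_F^\dagger,\pi\otimes\delta_F^{-1})$ is a finitely generated $\O/\varpi^n$-module, and this is the main obstacle: $P_F^\dagger$ is not compact, and the quotient $P_F^\dagger/R_F$ is in general neither finite nor a finitely generated abelian group, so Lemma \ref{contains compact implies finite} does not apply directly. I would instead insert the intermediate subgroup $ZR_F$ and apply Lemma \ref{contains compact implies finite} in two stages. First, for $(H,N) = (ZR_F, R_F)$: the hypothesis on $H^\bullet(R_F,-)$ follows from Lemma \ref{compact implies finite} because $R_F$ is compact $p$-adic analytic, and $ZR_F/R_F \cong Z/(Z\cap R_F)$ is a finitely generated abelian group since $Z$ is the $F$-rational points of an algebraic torus and the quotient by a compact open subgroup is finitely generated. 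Second, for $(H,N) = (P_F^\dagger, ZR_F)$: the quotient $P_F^\dagger/ZR_F$ is finite by the cited result of \cite{schneider1997representation}.

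Finally, since $\O/\varpi^n$ is Noetherian and only finitely many terms $E_\infty^{p,q}$ with $p+q = n$ are nonzero, $H^n(G,\pi)$ is a finite iterated extension of subquotients of finitely generated $\O/\varpi^n$-modules, hence itself finitely generated.
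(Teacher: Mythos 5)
Your proposal is correct and follows essentially the same route as the paper: tensor the Schneider--Stuhler resolution \eqref{resolution of 1} with $\O/\varpi^n$, run the double-complex spectral sequence converging to $\Ext^{\bullet}_G(\1,\pi)\cong H^{\bullet}(G,\pi)$, identify the $E_1$-terms with $H^q(P_F^\dagger,\pi\otimes\delta_F^{-1})$ via Lemma \ref{Vigneras Lemma on Ext groups}, and establish their finiteness by applying Lemma \ref{contains compact implies finite} twice through the intermediate subgroup $ZR_F$, exactly as in the paper's proof.
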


	\begin{proof}
		Since $ \pi $ is a smooth representation, we can apply Corollary \ref{main result}, to get \[ H^i(G,\pi)\cong \Ext^i_G(\1,\pi), \] where the right hand side is computed in the category of smooth representations of $ G $ on $ \O/\varpi^n $-modules. Since all terms in the resolution \eqref{resolution of 1} are free abelian groups, by the Künneth formula (\cite[Theorem 3.6.1]{weibel1995introduction}), it will remain exact after tensoring with $ \O/\varpi^n $ and we get a resolution of the trivial representation in $ \MOD^{\sm}_G(\O/\varpi^n) $: \[\dots\rightarrow\bigoplus_{F\in R_1}\cind^G_{P_F^\dagger}\delta_F \rightarrow\bigoplus_{F\in R_0}\cind^G_{P_F^\dagger}\delta_F\rightarrow \1 \rightarrow 0.  \]
		
		Let $ \pi\hookrightarrow I^\bullet $ be an injective resolution of $ \pi $ in $\MOD^{\sm}_G(\O/\varpi^n) $ and consider the double complex  \[ C^{i,j}:=\Hom_G(\bigoplus_{F\in R_i}\cind^G_{P_F^\dagger}\delta_F,I^j) . \]
		
		Since the objects $ I^j $ are injective, the complexes \[  0\rightarrow\Hom_G(\1,I^j)\rightarrow\Hom_G(\bigoplus_{F\in R_\bullet}\cind^G_{P_F^\dagger}\delta_F,I^j)\]
		are exact. Therefore, the spectral sequence associated to the horizontal filtration of $ C^{\bullet,\bullet} $ is given by  \[ E_{2,h}^{i,j}=\begin{cases}
		\Ext^j_G(\1,\pi),&\text{if } i=0,\\ 0, &\text{if } i>0.
		\end{cases} \]
		
		On the other hand, since the cohomology of the columns is \[  H^j(\Hom_G(\bigoplus_{F\in R_i}\cind^G_{P_F^\dagger}\delta_F,I^\bullet)) =\Ext^j_G(\bigoplus_{F\in R_i}\cind^G_{P_F^\dagger}\delta_F,\pi)\\ \cong \bigoplus_{F\in R_i}\Ext^j_{P_F^\dagger}(\delta_F,\pi) , \]using that $ \Ext^j_G(\cind^G_{P_F^\dagger}\delta_F,\pi)\cong \Ext^j_{P_F^\dagger}(\delta_F,\pi)$, by Lemma \ref{Vigneras Lemma on Ext groups}. Hence, the vertical filtration will give a spectral sequence \[ E_{2,v}^{i,j}=H^i(\bigoplus_{F\in R_\bullet}\Ext^j_{P_F^\dagger}(\delta_F,\pi)) \implies \Ext^{i+j}_G(\1,\pi).\] 
		
		Therefore, the claim follows from the fact that $\Ext^i_{P_F^\dagger}(\delta_F,\pi)\cong \Ext^i_{P_F^\dagger}(\1,\pi\otimes\delta_F^{-1})  $ is a finitely generated $ \O/\varpi^n $-module for every $ i\ge 0 $. Indeed, by Lemma \ref{compact implies finite}, the groups $ H^i(R_F,\pi\otimes\delta_F^{-1}) $ are finitely generated over $ \O/\varpi^n $ for all $ i\ge 0 $. Since the quotient $ ZR_F/R_F $ is a finitely generated abelian group, we can apply Lemma \ref{contains compact implies finite} to this pair of groups $ R_F\le ZR_F $, to get that $ H^i(ZR_F,\pi\otimes\delta_F^{-1}) $ is finitely generated and the Lemma \ref{contains compact implies finite} again applied to the pair $ ZR_F\le P_F^\dagger $ implies that $ H^i(P_F^\dagger,\pi\otimes\delta_F^{-1}) $ is finitely generated and the claim follows by Corollary \ref{main result}.
			\end{proof}
		
	\begin{lem}\label{lemma invariants direct limit}
		Let $ \{\pi_i\}_{i\in \mathbb{N}} $ be a directed system of $ G $-representations $ \pi_i\in \MOD_G^{\sm}(\O) $, such that the transition maps are injective. Then the natural map \[ \Lim_i (\pi_i)^G\overset{\sim}{\longrightarrow} (\Lim_i\pi_i)^G \] is an isomorphism.
	\end{lem}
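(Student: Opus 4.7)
The plan is to exploit the injectivity of the transition maps so that each $\pi_i$ becomes a genuine $G$-subrepresentation of $\pi := \Lim_i \pi_i$. Since filtered colimits along injective maps remain injective at each stage, the structure map $\pi_i \hookrightarrow \pi$ is injective for every $i$, and $\pi = \bigcup_i \pi_i$ as $G$-representations. The canonical map in question then sends a class represented by $v_i \in (\pi_i)^G$ to the image of $v_i$ in $\pi^G$, and both injectivity and surjectivity can be checked using this identification.

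For injectivity, I would argue as follows: if $v_i \in (\pi_i)^G$ maps to zero in $\pi$, then its image in $\pi_j$ vanishes for some $j \geq i$; since the transition map $\pi_i \to \pi_j$ is injective, we must have $v_i = 0$ in $\pi_i$, so its class in $\Lim_i (\pi_i)^G$ is zero.

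For surjectivity, given $v \in \pi^G$, choose an index $i$ and an element $v_i \in \pi_i$ whose image in $\pi$ equals $v$. For every $g \in G$, the element $g v_i - v_i \in \pi_i$ is sent to $gv - v = 0$ in $\pi$; by injectivity of $\pi_i \hookrightarrow \pi$, this forces $g v_i = v_i$ in $\pi_i$, so $v_i \in (\pi_i)^G$. Its class in $\Lim_i (\pi_i)^G$ is then the required preimage of $v$.

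There is no real obstacle here: the injectivity hypothesis on the transition maps does all the work, letting us detect vanishing in the colimit already at finite stages and transfer $G$-invariance back from $\pi$ to each $\pi_i$. The only preliminary remark worth recording before the proof is that $\Lim_i \pi_i$ does lie in $\MOD_G^{\sm}(\O)$, since filtered colimits of smooth representations are smooth.
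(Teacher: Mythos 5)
Your proof is correct and takes essentially the same route as the paper: both arguments rest on the observation that injectivity of the transition maps makes each structure map $\pi_i\rightarrow \Lim_i\pi_i$ injective, so that vanishing (and hence $G$-invariance, applied to $gv_i-v_i$) can be detected already at a finite stage. Nothing further is needed.
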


	\begin{proof}
		It is easy to see that the map is injective. To show surjectivity, note that two elements $ v_1, v_2\in \pi_{j} $ are equivalent in the limit $ \Lim_i \pi_i $ if and only if they are equal in $ \pi_j $. Indeed, by definition they are equivalent if and only if their images in some $ \pi_n $ are equal for some $ n\ge j $, but all transition maps are injective and hence this is equivalent to being equal in $ \pi_j $. In particular, if the image of an element $ v\in \pi_j $ in the limit is $ G $-invariant, then it is already $ G $-invariant in $ \pi_j $.
	\end{proof}

	For an $ \O $-module $ M $, we say that it is $ \varpi^\infty $-torsion, if every element in $ M $ is annihilated by a power of $ \varpi $.

	\begin{lem}\label{H(Pi/Pi^0)is torsion}
		Let $ G $ be a $ p $-adic reductive group, $ \Pi $ an admissible unitary Banach space representation of $ G $ over $ L $ and $ \Pi^0 $ an open bounded $ G $-invariant lattice in $ \Pi $. Then for all $ i\ge 0 $, we have \[ H^i(G,\Pi/\Pi^0)\cong \Lim_nH^i(G,\varpi^{-n}\Pi^0/\Pi^0). \]
		In particular, $H^i(G,\Pi/\Pi^0)$ is $ \varpi^\infty $-torsion, i.e. $ H^i(G,\Pi/\Pi^0)[1/\varpi]=0  $.
	\end{lem}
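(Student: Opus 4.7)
The plan is to first recognize that $\Pi/\Pi^0$ is smooth and equals the filtered colimit $\Lim_n V_n$ with injective transitions, where $V_n:=\varpi^{-n}\Pi^0/\Pi^0$ is an admissible smooth representation on an $\O/\varpi^n$-module. By Corollary \ref{main result}, both sides of the claimed isomorphism are $\Ext$-groups in $\MOD^{\sm}_G(\O)$, so it suffices to prove that $\Ext^i_G(\1,-)$ commutes with this filtered colimit. The second (torsion) assertion follows immediately: any element of $\Lim_n H^i(G,V_n)$ comes from some stage $N$ and so is annihilated by $\varpi^N$, making the entire colimit $\varpi^\infty$-torsion.

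For the main commutation, I would follow the template of the proof of Lemma \ref{reductive implis finite}, using the Bruhat--Tits resolution \eqref{resolution of 1} to construct a spectral sequence (natural in $V$)
\[E_2^{p,q}(V)=H^p\Bigl(\bigoplus_{F\in R_\bullet}\Ext^q_{P_F^\dagger}(\delta_F,V)\Bigr)\implies \Ext^{p+q}_G(\1,V).\]
Since filtered colimits along injections are exact in $\MOD^{\sm}_G(\O)$ and preserve first-quadrant spectral sequences term by term, it is enough to check that each functor $\Ext^q_{P_F^\dagger}(\delta_F,-)$ commutes with $\Lim_n$. I would reduce this via the Hochschild--Serre filtration $R_F\trianglelefteq ZR_F\le P_F^\dagger$ (as in the proof of Lemma \ref{contains compact implies finite}) to the statements that $H^j(R_F,-)$ and the discrete group cohomologies $H^*(ZR_F/R_F,-)$ and $H^*(P_F^\dagger/ZR_F,-)$ all commute with filtered colimits; since $ZR_F/R_F$ is a finitely generated abelian group and $P_F^\dagger/ZR_F$ is finite, both admit a resolution of the trivial module by finitely generated free modules over the corresponding group ring, from which the commutation is immediate.

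The central compactness input is the statement for the compact open subgroup $R_F$, and it follows cleanly from Lemma \ref{cts=sm}: for compact $R_F$ and smooth $V=\Lim_n V_n$, every continuous cochain in $C(R_F^{j+1},V)$ is automatically smooth, hence locally constant with values in a discrete module, and therefore (since $R_F^{j+1}$ is compact) has finite image and factors through some $V_n$. Thus $C(R_F^{j+1},\Lim_n V_n)=\Lim_n C(R_F^{j+1},V_n)$, and since filtered colimits commute with the cohomology of a complex, $H^j(R_F,\Lim_n V_n)\cong \Lim_n H^j(R_F,V_n)$. The main obstacle in executing this strategy is the spectral sequence bookkeeping: one has to ensure that the double complex producing the Bruhat--Tits spectral sequence is sufficiently functorial in $V$ to commute with the colimit, which can be arranged by using the functorial acyclic resolution $V\hookrightarrow C^\bullet(G,V)^{\sm}$ from the proof of Corollary \ref{main result} together with Lemma \ref{ExtG=Extk} in place of an ad-hoc injective resolution.
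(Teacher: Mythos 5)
Your proposal is correct and follows essentially the same route as the paper: reduce via the Bruhat--Tits spectral sequence and the Hochschild--Serre reductions $R_F\le ZR_F\le P_F^\dagger$ to the compact open subgroup $R_F$, where continuity of cochains into the discrete colimit forces factorization through a finite stage, and deduce the $\varpi^\infty$-torsion statement from the colimit description. The paper handles the comparison of spectral sequences by citing Weibel's Comparison Theorem rather than your functorial-resolution bookkeeping, but this is a cosmetic difference.
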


	\begin{proof}
		The representation $ \Pi/\Pi^0 $ is discrete and $ \varpi^\infty $-torsion. Moreover, it lies in the category $ \MOD^{\sm}_G(\O) $ and is isomorphic to the direct limit \[ \Pi/\Pi^0 \cong \Lim_n \varpi^{-n}\Pi^0/\Pi^0 . \] 
		If $ K\le G$ is a compact open subgroup, the terms in the complex of continuous cochains $ C^\bullet(K,\Pi/\Pi^0) $ commute with the direct limit, since the image of any map in $ C^n(K,\Pi/\Pi^0)\cong C^n(K,\Lim_n\varpi^{-n}\Pi^0/\Pi^0) $ will be contained in the image of finitely many $ \varpi^{-n}\Pi^0/\Pi^0 $ in the limit. (Note that this argumentation does not apply to a non-compact $ G $.) Moreover, by Lemma \ref{lemma invariants direct limit}, we get \[ C^n(K,\Pi/\Pi^0)^K\cong \Lim_nC^n(K,\varpi^{-n}\Pi^0/\Pi^0)^K, \]and since taking direct limits is exact in the category of $ \O $-modules (cf. \cite[Theorem 2.6.15]{weibel1995introduction}), the cohomology groups $ H^i(K,-) $ commute with the direct limit, i.e. \[ H^i(K,\Pi/\Pi^0)\cong \Lim_nH^i(K,\varpi^{-n}\Pi^0/\Pi^0). \]  
		
		To prove the statement for a non-compact $ G $, we proceed as in the proofs of the Lemmas \ref{contains compact implies finite} and \ref{reductive implis finite}, to reduce the proof to the compact situation. To adapt the proofs, note that if $ H $ is a discrete group, the continuous group cohomology of $ H $ is just group cohomology and if we assume that $ H $ is either finite or finitely generated and abelian, then the group ring $ \mathbb{Z}[H] $ is Noetherian and therefore, $ H^i(H,-)\cong \Ext^i_{\Z[H]}(\Z,-) $ can be computed using a projective resolution of $ \Z $ consisting of free $ \Z[H] $-modules of finite rank, hence $ H^i(H,-) $ commutes with direct limits. Then we argue as in the proof of Lemma \ref{reductive implis finite} to obtain a spectral sequence \[ E_{2}^{i,j}=H^i(\bigoplus_{F\in R_\bullet}\Ext^j_{P_F^\dagger}(\delta_F,\Pi/\Pi^0)) \implies \Ext^{i+j}_G(\1,\Pi/\Pi^0).\] 
		Moreover, by the Comparison Theorem 5.2.12 in \cite{weibel1995introduction}, it is then enough to show that the terms in the spectral sequence commute with the direct limit, i.e. it is enough to show that \[ \Ext^i_{P_F^\dagger}(\delta_F,\Pi/\Pi^0)\cong \Lim_n \Ext^i_{P_F^\dagger}(\delta_F,\varpi^{-n}\Pi^0/\Pi^0),\]
		for every $ q\ge 0 $ and $ F\in R_q $. Or equivalently, \[ H^i(P_F^\dagger,\Pi/\Pi^0\otimes \delta_F^{-1})\cong \Lim_n H^i(P_F^\dagger,\varpi^{-n}\Pi^0/\Pi^0\otimes \delta_F^{-1}).\] This follows then from the fact that this is true for the compact open subgroup $ R_F $, using as in the proof of Lemma \ref{contains compact implies finite} the Hochschild-Serre spectral sequences associated to $ R_F\le ZR_F $, $ ZR_F\le P_F^\dagger $: \[ E_2^{i,j}=H^i(ZR_F/R_F,H^j(R_F,\pi)) \implies H^{i+j}(ZR_F,\pi) \]
		\[ E_2^{i,j}=H^i(P^\dagger_F/ZR_F,H^j(ZR_F,\pi)) \implies H^{i+j}(P^\dagger_F,\pi) \]
		The $ E_2 $-terms all commute with direct limits by the above discussion, hence so do the targets.
		
		In particular, since each of the $ H^i(G,\varpi^{-n}\Pi^0/\Pi^0) $ is $ \varpi^\infty $-torsion, so is the direct limit $ \Lim_nH^i(G,\varpi^{-n}\Pi^0/\Pi^0)\cong H^i(G,\Pi/\Pi^0) $.
	\end{proof}
	
	\begin{prop} \label{H=limH} 
		Let $ G $ be a $ p $-adic reductive group, $ \Pi $ an admissible unitary Banach space representation of $ G $ over $ L $ and $ \Pi^0 $ an open bounded $ G $-invariant lattice in $ \Pi $. Then we have \[ H^i(G,\Pi^0)\cong \plim_n H^i(G,\Pi^0/\varpi^n\Pi^0), \forall i\ge 0. \]
		In particular, \[ H^i(G,\Pi)\cong (\plim_n H^i(G,\Pi^0/\varpi^n\Pi^0))[1/\varpi]. \]
	\end{prop}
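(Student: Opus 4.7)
The plan is to reduce the second isomorphism to the first by inverting $\varpi$. I would exploit the short exact sequence $0 \to \Pi^0 \to \Pi \to \Pi/\Pi^0 \to 0$, which is strong because $\Pi/\Pi^0$ is discrete, and pass to its long exact sequence in continuous cohomology. Lemma \ref{H(Pi/Pi^0)is torsion} provides that $H^i(G, \Pi/\Pi^0)$ is $\varpi^\infty$-torsion in every degree, and since $\Pi$ is already an $L$-vector space one has $H^i(G,\Pi)[1/\varpi] = H^i(G,\Pi)$. Inverting $\varpi$ in the long exact sequence thus produces $H^i(G,\Pi) \cong H^i(G,\Pi^0)[1/\varpi]$, and combining this with the first isomorphism yields the second.

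For the first isomorphism, the first step is to recognise the relevant cochain complex as an inverse limit. Because $\Pi^0$ is a bounded open lattice in the Banach space $\Pi$, its topology is $\varpi$-adic and one has $\Pi^0 \cong \plim_n \Pi^0/\varpi^n\Pi^0$ as topological $G$-modules. The universal property of inverse limits in topological spaces then gives $C^n(G, \Pi^0) \cong \plim_n C^n(G, \Pi^0/\varpi^n\Pi^0)$, and taking $G$-invariants, which is left exact and commutes with $\plim$, produces
\[ C^\bullet(G,\Pi^0)^G \cong \plim_n C^\bullet(G,\Pi^0/\varpi^n\Pi^0)^G \]
as complexes of abelian groups.

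The second step is to apply the Milnor short exact sequence for the cohomology of an inverse limit of cochain complexes, which, in the present situation, takes the form
\[ 0 \to \plim\nolimits^{(1)}_n H^{i-1}(G,\Pi^0/\varpi^n\Pi^0) \to H^i(G,\Pi^0) \to \plim_n H^i(G,\Pi^0/\varpi^n\Pi^0) \to 0. \]
The $\plim^{(1)}$-term then vanishes thanks to Lemma \ref{reductive implis finite}: each $H^{i-1}(G, \Pi^0/\varpi^n\Pi^0)$ is finitely generated over $\O/\varpi^n$, and since $L/\Q_p$ is finite the residue field $k$ is finite, so $\O/\varpi^n$ is a finite ring and each such cohomology group is a finite abelian group. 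Inverse systems of finite abelian groups trivially satisfy Mittag--Leffler, so $\plim^{(1)}$ vanishes and the desired isomorphism follows.

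The main technical obstacle is justifying the Milnor short exact sequence in this setting: its standard derivation requires that the transition maps $C^n(G, \Pi^0/\varpi^{n+1}\Pi^0)^G \to C^n(G, \Pi^0/\varpi^n\Pi^0)^G$ be surjective degreewise, or at least satisfy Mittag--Leffler. Surjectivity on the full complex $C^n(G, -)$ is immediate, since locally constant functions into discrete quotients can be lifted pointwise on any open cover on which they are constant; the subtlety is preserving $G$-equivariance under such a lift. Should surjectivity on $G$-invariants fail, the argument can be reorganised around the resolution \eqref{resolution of 1}: one runs the double-complex spectral sequence used in the proof of Lemma \ref{reductive implis finite} for each $n$ and then passes to the inverse limit, using that the finiteness furnished by Lemma \ref{reductive implis finite} ensures every term on every page is a finite group, so $\plim$ commutes with the spectral sequence. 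This reduces the statement to the compact-group case $H^i(R_F, \Pi^0) \cong \plim_n H^i(R_F, \Pi^0/\varpi^n\Pi^0)$, which is provided by Emerton in \cite[Proposition 1.2.19 and 1.2.20]{emerton2006interpolation}, and is then propagated from $R_F$ to $P_F^\dagger$ via Hochschild--Serre, exactly as in the proofs of Lemmas \ref{contains compact implies finite} and \ref{H(Pi/Pi^0)is torsion}.
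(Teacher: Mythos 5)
Your overall route is the same as the paper's: identify $C^\bullet(G,\Pi^0)^G$ with $\plim_n C^\bullet(G,\Pi^0/\varpi^n\Pi^0)^G$, apply the Milnor/$\plim^{(1)}$ exact sequence for a tower of cochain complexes, kill the $\plim^{(1)}$-term using the finiteness of Lemma \ref{reductive implis finite}, and deduce the second isomorphism from the long exact sequence attached to $0\to\Pi^0\to\Pi\to\Pi/\Pi^0\to 0$ together with Lemma \ref{H(Pi/Pi^0)is torsion}. Your vanishing argument for $\plim^{(1)}$ (finitely generated over the finite ring $\O/\varpi^n$, hence finite groups, hence Mittag--Leffler) is a perfectly good and in fact more elementary alternative to the paper's appeal to Jensen's theorem for finitely generated modules over the complete Noetherian local ring $\O$.

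The one point you leave genuinely open is exactly the point where the paper does work: the degreewise surjectivity of the transition maps on $G$-invariant cochains. This is not optional hedging material, and it does not require your fallback. The fix is short: the surjection $\Pi^0/\varpi^n\Pi^0\twoheadrightarrow\Pi^0/\varpi^m\Pi^0$ sits in the short exact sequence of discrete $G$-modules $0\to\varpi^m\Pi^0/\varpi^n\Pi^0\to\Pi^0/\varpi^n\Pi^0\to\Pi^0/\varpi^m\Pi^0\to 0$; this induces a short exact sequence of cochain complexes (pointwise lifting of values, as you say), and exactness survives taking $G$-invariants because the kernel terms $C^i(G,\varpi^m\Pi^0/\varpi^n\Pi^0)$ are acyclic for continuous group cohomology, so the connecting map into $H^1(G,-)$ of the kernel term is zero and the map on $G$-invariants is surjective. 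By contrast, your proposed fallback is shakier than the main route: the double-complex spectral sequence of Lemma \ref{reductive implis finite} lives in the category of smooth representations and rests on Corollary \ref{main result}, which only identifies continuous cohomology with $\Ext$-groups for smooth (discrete) coefficients; $\Pi^0$ is not discrete, so "reducing to $H^i(R_F,\Pi^0)\cong\plim_n H^i(R_F,\Pi^0/\varpi^n\Pi^0)$ and propagating by Hochschild--Serre" would first require setting up the resolution-based spectral sequence for continuous cohomology with compact (non-smooth) coefficients, which is precisely the machinery the paper's cochain-tower argument is designed to avoid. So: close the Mittag--Leffler step as above and drop the fallback, and your proof matches the paper's.
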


	\begin{proof}
		We consider the tower of cochain complexes $ \dots \rightarrow C_2^\bullet\rightarrow C_1^\bullet\rightarrow C_0^\bullet$, where $ C_n^i:=C^i(G,\Pi^0/\varpi^n\Pi^0)^G $ is the cochain complex computing the continuous group cohomology of $ \Pi^0/\varpi^n\Pi^0 $. This projective system of cochain complexes satisfies the Mittag-Leffler condition, because each of the maps $ C^i_n\rightarrow C^i_m $, $ n\ge m $, is surjective. Indeed, since $ \Pi^0/\varpi^m\Pi^0 $ is discrete, the short exact sequence of representations  \[ 0\rightarrow \varpi^m\Pi^0/\varpi^n\Pi^0\rightarrow\Pi^0/\varpi^n\Pi^0\rightarrow\Pi^0/\varpi^m\Pi^0 \rightarrow 0 \] induces a short exact sequence of complexes \[ 0\rightarrow C^\bullet(G,\varpi^m\Pi^0/\varpi^n\Pi^0)\rightarrow C^\bullet(G,\Pi^0/\varpi^n\Pi^0)\rightarrow C^\bullet(G,\Pi^0/\varpi^m\Pi^0) \rightarrow 0 , \]
		which stays exact after taking $ G $-invariants, since the module $ C^i(G,\varpi^m\Pi^0/\varpi^n\Pi^0) $ is acyclic for the continuous group cohomology for all $ i $.
		
		We can therefore apply Theorem 3.5.8 of \cite{weibel1995introduction}, to get a short exact sequence \[ 0\rightarrow {\plim_n}^{(1)} H^{i-1}(C_n^\bullet)\rightarrow H^i(\plim_n C_n^\bullet)\rightarrow \plim_n H^i(C_n^\bullet) \rightarrow 0,\]
		where $ \plim_n^{(1)} $ denotes the right derived functor of the projective limit functor. But by Theorem 1 in \cite{jensen1970vanishing}, the functor $ \plim_n^{(1)} $ applied to a projective system of finitely generated modules over a complete Noetherian local ring, such as $ \O $, vanishes. Hence, by Lemma \ref{reductive implis finite}, \[  {\plim_n}^{(1)} H^{i-1}(C_n^\bullet)={\plim_n}^{(1)} H^{i-1}(G,\Pi^0/\varpi^n\Pi^0)=0  \] and we get an isomorphism \[ H^i(\plim_n C_n^\bullet)\overset{\cong}{\longrightarrow}\plim_n H^i(C_n^\bullet)=\plim_n H^i(G,\Pi^0/\varpi^n\Pi^0). \]
		Since the inverse limit $ \plim_n\Pi^0/\varpi^n\Pi^0 $ can be seen as the inverse limit in the category of topological spaces, one has isomorphisms  \[ C(G^{i+1},\plim_n\Pi^0/\varpi^n\Pi^0)\cong \plim_nC(G^{i+1},\Pi^0/\varpi^n\Pi^0),\]
		inducing isomorphisms on the $ G $-invariants  \[ C^i(G,\plim_n\Pi^0/\varpi^n\Pi^0)^G \cong (\plim_nC^i(G,\Pi^0/\varpi^n\Pi^0))^G\cong\plim_n  C^i(G,\Pi^0/\varpi^n\Pi^0)^G.\]
		The second isomorphism follows again from the universal property of projective limits, since $ \Hom_G(\1,\plim_n C^i(G,\Pi^0/\varpi^n\Pi^0))\cong \plim_n \Hom_G(\1, C^i(G,\Pi^0/\varpi^n\Pi^0)) $. Therefore, the cohomology of $ \plim_nC_n^\bullet $ is in fact $ H^i(G,\plim_n \Pi^0/\varpi^n\Pi^0) =H^i(G,\Pi^0)$.
		
		It remains to show that $ H^i(G,\Pi)\cong H^i(G,\Pi^0)[1/\varpi] $. For this, consider the short exact sequence of $ G $-modules \[ 0\rightarrow \Pi^0 \rightarrow \Pi \rightarrow \Pi/\Pi^0\rightarrow 0 .\] 
		
		The quotient $ \Pi/\Pi^0 $ is discrete, therefore we get a long exact sequence in cohomology \[ \dots \rightarrow H^{i-1}(G,\Pi/\Pi^0)\rightarrow H^i(G,\Pi^0)\rightarrow H^i(G,\Pi)\rightarrow H^i(G,\Pi/\Pi^0)\rightarrow \dots  .\]
		
		Since localization is exact, $ H^i(G,\Pi/\Pi^0) $ is $ \varpi^\infty $-torsion by Lemma \ref{H(Pi/Pi^0)is torsion} and $ H^i(G,\Pi) $ is an $ L $-vector space, we get \[ H^i(G,\Pi^0)[1/\varpi]\cong H^i(G,\Pi)[1/\varpi]\cong H^i(G,\Pi). \]
	\end{proof}

	\begin{cor} \label{H^i(G,Pi) are finite diml}
		In the notation of Proposition \ref{H=limH}, the $ L $-vector spaces $ H^i(G,\Pi) $ are finite dimensional, for all $ i\ge 0 $.
	\end{cor}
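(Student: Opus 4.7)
The plan is to combine Proposition \ref{H=limH} with the finiteness result of Lemma \ref{reductive implis finite}, together with a uniform bound on the number of generators of the system $\{H^i(G, \Pi^0/\varpi^n\Pi^0)\}_n$. By Proposition \ref{H=limH}, it suffices to show that the $\O$-module $M := \plim_n H^i(G, \Pi^0/\varpi^n\Pi^0)$ is finitely generated, for then $H^i(G, \Pi) \cong M[1/\varpi]$ is a finite dimensional $L$-vector space.

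First I would verify that $\Pi^0/\varpi^n\Pi^0$ is admissible for every $n$: this follows by induction from the short exact sequence $0 \to \Pi^0/\varpi^{n-1}\Pi^0 \xrightarrow{\varpi} \Pi^0/\varpi^n\Pi^0 \to \Pi^0/\varpi\Pi^0 \to 0$, using that admissible smooth representations form a Serre subcategory of $\MOD^{\sm}_G(\O)$. Lemma \ref{reductive implis finite} then produces, for each $n$, a finitely generated $\O/\varpi^n$-module $M_n := H^i(G, \Pi^0/\varpi^n\Pi^0)$. Setting $d := \dim_k M_1 < \infty$, the long exact sequence associated to the same short exact sequence gives a piece $M_{n-1} \xrightarrow{\varpi_*} M_n \to M_1$ in which $\varpi_*$ has image equal to $\varpi M_n$ (by tracing the effect of the connecting map on cocycles). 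Hence $M_n/\varpi M_n$ injects into $M_1$, so $\dim_k(M_n/\varpi M_n) \leq d$, and Nakayama's lemma over the local ring $\O/\varpi^n$ shows that every $M_n$ is generated by at most $d$ elements as an $\O$-module.

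The main obstacle is to promote this uniform bound to the inverse limit $M$. To this end I would first replace the system by its stable images $\widetilde{M}_n := \bigcap_k \operatorname{image}(M_{n+k} \to M_n)$, which are well defined since $M_n$ has finite length, form a cofinal subsystem with surjective transition maps, and satisfy $\plim_n \widetilde{M}_n = M$. The bound of at most $d$ generators is inherited by each $\widetilde{M}_n$, since submodules of finitely generated modules over the principal ideal ring $\O/\varpi^n$ retain the same bound on generators via the structure theorem. The set $T_n$ of $\O$-linear surjections $\O^d \twoheadrightarrow \widetilde{M}_n$ is then a non-empty subset of the finite set $(\widetilde{M}_n)^d$, and the surjectivity of $\widetilde{M}_n \twoheadrightarrow \widetilde{M}_{n-1}$ makes $\{T_n\}$ an inverse system of non-empty finite sets, so $\plim_n T_n$ is non-empty by compactness. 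Any element yields a continuous $\O$-linear map $\varphi \colon \O^d \to M$ which surjects onto every $\widetilde{M}_n$. Since $k$ is finite, $\O^d$ is compact, so $\varphi(\O^d)$ is both closed and dense in the Hausdorff profinite space $M$, hence equals $M$. Therefore $M$ is finitely generated over $\O$ of rank at most $d = \dim_k H^i(G, \Pi^0/\varpi\Pi^0)$, and inverting $\varpi$ concludes the proof.
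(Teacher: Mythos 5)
Your reduction to showing that $M:=\plim_n H^i(G,\Pi^0/\varpi^n\Pi^0)$ is a finitely generated $\O$-module is correct (and agrees with the paper, via Proposition \ref{H=limH}), and your limit argument with stable images and surjections $\O^d\twoheadrightarrow \widetilde{M}_n$ would work \emph{if} you had the uniform bound on generators. The gap is precisely there: you justify $\dim_k(M_n/\varpi M_n)\le d$ by claiming that in the long exact sequence of $0\to\Pi^0/\varpi^{n-1}\Pi^0\xrightarrow{\ \varpi\ }\Pi^0/\varpi^n\Pi^0\to\Pi^0/\varpi\Pi^0\to0$ the map $\varpi_*\colon M_{n-1}\to M_n$ has image equal to $\varpi M_n$. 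Only the inclusion $\varpi M_n\subseteq\operatorname{im}\varpi_*$ is automatic (multiplication by $\varpi$ on $\Pi^0/\varpi^n\Pi^0$ factors through the projection onto $\Pi^0/\varpi^{n-1}\Pi^0$); the reverse inclusion fails in general, because $\varpi$ here is a map between two \emph{different} coefficient modules, so $\varpi_*$ is not multiplication by $\varpi$ on $M_n$. A minimal example: for a cyclic group $C$ of order $p$ acting trivially on $\Z_p$, one has $H^1(C,\Z/p^m)\cong\Z/p$ for all $m$, and the map $H^1(C,\Z/p^{n-1})\to H^1(C,\Z/p^n)$ induced by multiplication by $p$ is \emph{surjective}, whereas $p\cdot H^1(C,\Z/p^n)=0$. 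Hence exactness only gives that $M_n/\operatorname{im}\varpi_*$ (a possibly proper quotient of $M_n/\varpi M_n$) embeds into $M_1$, which does not bound the number of generators of $M_n$; the obvious induction only yields a bound growing linearly in $n$. Without the uniform bound, the stable-image and compactness part of your argument has no input. (The uniform bound is in fact true, but the natural way to see it is a posteriori, via a universal-coefficients argument once one already knows $H^i(G,\Pi^0)$ and $H^{i+1}(G,\Pi^0)$ are finitely generated, so it cannot be used as the engine of the proof.)

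The paper's proof sidesteps finite-level comparisons entirely: apply the long exact sequence to $0\to\Pi^0\xrightarrow{\ \varpi\ }\Pi^0\to\Pi^0/\varpi\Pi^0\to0$ (legitimate since the quotient is discrete). Now the middle map is an endomorphism of the single module $\Pi^0$, so it genuinely induces multiplication by $\varpi$ on $H^i(G,\Pi^0)$, and exactness gives an embedding of $H^i(G,\Pi^0)/\varpi H^i(G,\Pi^0)$ into $H^i(G,\Pi^0/\varpi\Pi^0)$, which is finite by Lemma \ref{reductive implis finite}. By Proposition \ref{H=limH}, $H^i(G,\Pi^0)\cong\plim_n H^i(G,\Pi^0/\varpi^n\Pi^0)$ is a projective limit of finite modules, hence a compact (profinite) $\O$-module, and the topological Nakayama lemma then gives finite generation over $\O$; inverting $\varpi$ concludes, exactly as in your first paragraph. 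So your framework is salvageable, but the uniform-bound step must be replaced by this argument carried out at the level of $\Pi^0$ rather than at each finite level $\Pi^0/\varpi^n\Pi^0$.
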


	\begin{proof}
		Since $ H^i(G,\Pi)\cong H^i(G,\Pi^0)[1/\varpi] $, it suffices to show that $ H^i(G,\Pi^0) $ is a finitely generated $ \O $-module. We have a short exact sequence \[ 0\longrightarrow \Pi^0\overset{\varpi}{\longrightarrow}\Pi^0\longrightarrow \Pi^0/\varpi\Pi^0\longrightarrow 0, \]
		which induces the long exact sequence of $ \O $-modules \[ \dots\rightarrow H^i(G,\Pi^0)\overset{\varpi}{\longrightarrow}H^i(G,\Pi^0)\longrightarrow H^i(G,\Pi^0/\varpi\Pi^0)\longrightarrow \dots .\]
		In particular, the quotient $ H^i(G,\Pi^0)/\varpi H^i(G,\Pi^0)$ can be embedded into the $ \O $-module $ H^i(G,\Pi^0/\varpi\Pi^0) $, which is finite by Lemma \ref{reductive implis finite}. Moreover, $ H^i(G,\Pi^0) $ is profinite by Lemma \ref{reductive implis finite} and Proposition \ref{H=limH}, hence it is a compact $ \O $-module and the claim follows from the topological Nakayama's lemma (cf. \cite[§3, Corollary]{balister1997note}).
	\end{proof}

	\subsection{Banach space representations of $ \GL_2(\Q_p) $} \label{sec:banach of GL2}
	
	We use Proposition \ref{H=limH} to prove an analogue of Corollary \ref{H(SL2,pi)=0} for Banach space representations. 
	
	From now on, let $ G=\GL_2(\Q_p)$, let $ \zeta:\Q_p^\times\rightarrow  L^\times$ be a unitary character and let $ \Badm_{G,\zeta}(L) $ be the full subcategory of $ \Badm_G(L) $ consisting of objects which have central character $ \zeta $. This category does not have enough injectives or projectives, but we can consider the Yoneda Ext-groups in this category $ \Ext_{\Badm_{G,\zeta}(L)} ^i(\Pi_1,\Pi_2)$. 
	
	We want to prove the following
	\begin{prop} \label{prop Ext=limExt}
		For $ \Pi_1, \ \Pi_2\in \Badm_{G,\zeta}(L) $, with $ \Pi_1 $ of finite length, one has  \begin{equation}\label{Ext=limExt}
			\Ext^i_{\Badm_{G,\zeta}(L)} (\Pi_1,\Pi_2)\cong \left(  \plim_n\Ext^i_{\MOD_{G,\bar{\zeta}}^{\sm}(\O/\varpi^n)}(\Pi_1^0/p^n,\Pi_2^0/p^n)\right) [1/p], \forall i\ge 0,
			\end{equation} 
			where $ \bar{\zeta} $ is the composition of $ \zeta $ with the projection map $ \O\twoheadrightarrow \O/\varpi^n $.
	\end{prop}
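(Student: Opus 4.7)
The plan is to mirror the proof of Proposition \ref{H=limH}, upgrading from continuous cohomology (the case $\Pi_1 = \1$) to Yoneda Ext-groups.

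First, I would dispose of the case $i=0$: this reduces to
\[
\Hom_{\Badm_{G,\zeta}(L)}(\Pi_1,\Pi_2) \cong \bigl(\plim_n \Hom_G(\Pi_1^0/p^n,\Pi_2^0/p^n)\bigr)[1/p],
\]
which follows from $p$-adic completeness of $\Pi_2^0$ together with the fact that any continuous $G$-equivariant $L$-linear map sends some scalar multiple of $\Pi_1^0$ into $\Pi_2^0$, and is therefore determined by its compatible reductions mod $p^n$.

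For general $i$, I would use the Yoneda interpretation of Ext. Given compatible open bounded $G$-invariant lattices $\Pi_j^0 \subseteq \Pi_j$, a Yoneda $i$-extension in $\Badm_{G,\zeta}(L)$ admits compatible open bounded $G$-invariant lattices on its intermediate terms, and reducing mod $\varpi^n$ yields a compatible family of $i$-extensions in $\MOD_{G,\bar\zeta}^{\sm}(\O/\varpi^n)$; conversely, a compatible family should reassemble, via $\plim_n$ followed by inversion of $p$, into a Yoneda extension in $\Badm_{G,\zeta}(L)$. To promote this correspondence to an isomorphism of abelian groups, I would set up a Milnor-type short exact sequence involving $\plim_n$ and ${\plim_n}^{(1)}$ of the smooth Ext-groups on the right hand side of \eqref{Ext=limExt}, analogous to the sequence obtained in the proof of Proposition \ref{H=limH} via \cite[Theorem 3.5.8]{weibel1995introduction}. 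Inverting $p$ would then give the claim, provided the ${\plim_n}^{(1)}$-term vanishes.

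The hard part will be establishing the finiteness needed to kill this ${\plim_n}^{(1)}$: one must show that $\Ext^i_{\MOD_{G,\bar\zeta}^{\sm}(\O/\varpi^n)}(\Pi_1^0/p^n,\Pi_2^0/p^n)$ is a finitely generated $\O/\varpi^n$-module for all $i,n$. This is an analogue of Lemma \ref{reductive implis finite} but with a nontrivial admissible source. I would attempt to adapt that lemma's strategy: apply the Bruhat--Tits resolution \eqref{resolution of 1} of $\1$ and Lemma \ref{Vigneras Lemma on Ext groups} to reduce the Ext-computation over $G$ to Ext-groups over the stabilizers $P_F^\dagger$, then descend along the chain $R_F \le ZR_F \le P_F^\dagger$ using the Hochschild--Serre spectral sequences as in Lemma \ref{contains compact implies finite}. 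At the compact level, Pontryagin duality realises the relevant smooth internal Hom as a finitely generated module over the Noetherian completed group algebra $\O/\varpi^n\llbracket R_F\rrbracket$, so Lemma \ref{compact implies finite} (or a mild extension to non-trivial coefficients, using a resolution of $(\Pi_1^0/p^n)^{\vee}$ by free finitely generated modules) produces the required finiteness. A devissage along the length filtration of $\Pi_1$ reduces to the case of an irreducible source, which is the only point where the finite length hypothesis is used. With finiteness in hand, Jensen's theorem \cite{jensen1970vanishing} forces the vanishing of ${\plim_n}^{(1)}$ and the proposition follows.
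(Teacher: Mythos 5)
Your strategy founders at the step you describe as ``set up a Milnor-type short exact sequence involving $\plim_n$ and ${\plim_n}^{(1)}$ of the smooth Ext-groups''. In Proposition \ref{H=limH} that sequence exists because the continuous cohomology groups are by definition the cohomology of an explicit tower of cochain complexes $C^\bullet(G,\Pi^0/\varpi^n\Pi^0)^G$ satisfying Mittag--Leffler, so \cite[Theorem 3.5.8]{weibel1995introduction} applies. For $\Ext^i_{\Badm_{G,\zeta}(L)}(\Pi_1,\Pi_2)$ with a general source $\Pi_1$ there is no such complex: the category $\Badm_{G,\zeta}(L)$ has neither enough injectives nor enough projectives, the Ext-groups are Yoneda Ext-groups, and these are not the cohomology of any inverse limit of complexes computing $\Ext^i_{\MOD^{\sm}_{G,\bar\zeta}(\O/\varpi^n)}(\Pi_1^0/p^n,\Pi_2^0/p^n)$. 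Your reduction/reassembly of Yoneda $i$-extensions is also only a heuristic: one must choose lattices on the intermediate terms so that the lattice sequence is exact and stays exact mod $p^n$, check independence of all choices, and produce inverse maps compatible with the Baer sum -- none of which is supplied, and it is precisely this comparison (not the finiteness of the mod-$p^n$ Ext-groups) that is the substance of the proposition. Relatedly, you locate the use of the finite length hypothesis in a d\'evissage inside the finiteness argument, whereas it is actually needed to identify the Banach-category Ext-groups with something computable at all.

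The paper takes a different and essentially module-theoretic route, special to $G=\GL_2(\Q_p)$: decompose $\Badm_{G,\zeta}(L)$ into blocks, apply Pa\v{s}k\={u}nas's functor $\m$ into finitely generated $\E[1/p]$-modules, and invoke \cite[Corollary 4.48]{paskColmez} (this is where $\Pi_1$ of finite length enters) to convert $\Ext^i_{\Banb}(\Pi_1,\Pi_2)$ into $\Ext^i_{\E[1/p]}(\m(\Pi_2),\m(\Pi_1))$. Since $\E$ is Noetherian and pseudo-compact, one can choose finitely generated $\E$-lattices, resolve by finite free modules, and prove the limit formula there (Lemma \ref{Ext=lim for Ep-modules}); one then translates back to $\MOD^{\sm}_{G,\bar\zeta}(\O/\varpi^n)$ via the anti-equivalence with compact $\E/p^n$-modules, the block decomposition and \cite[Corollary 5.17, 5.18]{paskColmez}, and Schikhof/Pontryagin duality $(\Pi^0)^d/p^n\cong(\Pi^0/p^n)^\vee$. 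Your proposed finiteness statement for $\Ext^i_{\MOD^{\sm}_{G,\bar\zeta}(\O/\varpi^n)}(\Pi_1^0/p^n,\Pi_2^0/p^n)$ is plausible and could likely be proved along the lines of Lemma \ref{reductive implis finite}, but even granting it, your argument has no bridge from the Yoneda Ext-groups in the Banach category to the tower whose ${\plim_n}^{(1)}$ you want to kill, so the proof as proposed does not go through.
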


	By Proposition 5.34 and Proposition 5.36 of \cite{paskColmez}, we know that both categories $ \Badm_{G,\zeta}(L) $ and $ \MOD_{G,\zeta}^{\lfin}(\O) $ decompose into the direct sum, resp. product, of subcategories \begin{align*}
		\Badm_{G,\zeta}(L)\cong \bigoplus_\mathcal{B}\Badm_{G,\zeta}(L)_{\mathcal{B}}, \\
		\MOD_{G,\zeta}^{\lfin}(\O) \cong \prod_\mathcal{B}\MOD_{G,\zeta}^{\lfin}(\O) _{\mathcal{B}}.
	\end{align*}
	Here, the blocks $ \mathcal{B} $ are the same equivalence classes of irreducible representations in $ \MOD_{G,\zeta}^{\sm}(k) $. A representation $ \Pi \in \Badm_{G,\zeta}(L)  $ lies in a subcategory $ \Badm_{G,\zeta}(L)_{\mathcal{B}} $, if for an open bounded $ G $-invariant lattice $ \Pi^0\subset\Pi $, all irreducible subquotients of the smooth $ k $-representation $ \Pi^0\otimes_\O k $ lie in $ \mathcal{B} $.
	
	In particular, there are no extensions between representations lying in different blocks and it is enough to show \eqref{Ext=limExt} for each block individually. Fix such block $ \mathcal{B} $ and consider the category $ \MOD_{G,\zeta}^{\lfin}(\O)_{\B} $ and let $ \mathfrak{C}(\O) $ be the strictly full subcategory of $ \MOD_G^{\text{pro aug}}(\O) $ which is anti-equivalent to $  \MOD_{G,\zeta}^{\lfin}(\O)_{\B}$ via Pontryagin duality. Corollary 1.2 in \cite{pavskunas2014blocks} gives a full description of the possible blocks $ \B $ and shows in particular, that all of the blocks contain only finitely many elements. Let $ \pi_1 ,\dots, \pi_s$ be representatives of the isomorphism classes of irreducible $ k $-representations in $ \mathcal{B} $. Then the representation $   \oplus_{i=1}^s \pi_i^\vee$ lies in $ \mathfrak{C}(\O) $ and has a projective envelope $ \textP $. As in Section 2 of \cite{paskColmez}, we define the ring $\E=\End_{\mathfrak{C}(\O)}(\textP) $ as the endomorphism ring of this projective envelope. This endomorphism ring is equipped with its so-called natural topology (cf. \cite[p.14]{paskColmez}), making it a pseudo-compact ring. In fact, the natural topology is the same as the $ \mathfrak{m} $-adic topology, where $ \mathfrak{m} $ is the maximal two-sided ideal of $ \E $ (\cite[Corollary 3.11]{paskColmez}). We obtain an equivalence of categories (cf. Section 4.2 in \cite{paskColmez}) \begin{align*}
	 \mathfrak{C}(\O) &\overset{\sim}{\longrightarrow} \{\text{compact (right) }\E\text{-modules} \},  \\ M&\mapsto \Hom_{\mathfrak{C}(\O)}(\textP,M) 
	\end{align*} with inverse given by the completed tensor product $ \m\mapsto \m\widehat{\otimes}_{\E} \textP $, for a compact $ \E $-module $ \m $.
	
	The ring $ \E $ is finitely generated over its center, which is Noetherian (see Section 6.4 of \cite{pavskunas2021finiteness}), and hence our setup satisfies the assumptions of Section 4.2 in \cite{paskColmez}. As constructed in Lemma 4.9 of \cite{paskColmez}, we have a fully faithful contravariant functor  \[ \m:\Banb\rightarrow \MOD^{\fg}_{\E[1/p]} , \] where $ \MOD^{\fg}_{\E[1/p]} $ is the category of finitely generated $ \E[1/p] $-modules. The functor $ \m $ is defined as follows: Let $ \Pi \in\Banb$ and let $ \Pi^0 $ be an open bounded $ G $-invariant lattice of $ \Pi $. Then its Schikhof dual $ (\Pi^0)^d:=\Hom_\O(\Pi^0,\O) $ is an element of $ \mathfrak{C}(\O) $, by Lemma 4.11 in \cite{paskColmez}, so by applying $ \Hom_{\mathfrak{C}(\O)}(\textP,-) $, we obtain a compact $ \E $-module. We set \[ \m(\Pi):=\Hom_{\mathfrak{C}(\O )}(\textP,(\Pi^0)^d )\otimes_\O L. \] It does not depend on the choice of the lattice $ \Pi^0 $. This functor $ \m $ induces an isomorphism  \[ \Ext^i_{\Banb}(\Pi_1,\Pi_2)\cong \Ext^i_{\E[1/p]}(\m(\Pi_2),\m(\Pi_1)),\]
	for all $ i\ge 0 $ and for $ \Pi_1, \ \Pi_2\in \Banb$, with $ \Pi_1 $ of finite length (\cite[Corollary 4.48]{paskColmez}). The Ext-group on the right hand side can then be expressed as a projective limit in the following way:
	
	\begin{lem}\label{Ext=lim for Ep-modules}
		Let $ \m_1 $, $ \m_2 $ be finitely generated $ \Ep $-modules. Then there exist $ \E $-stable $ \O $-lattices $ \m_1^0 $, $ \m_2^0 $ in $ \m_1 $ and $ \m_2 $ respectively, which are finitely generated as $ \E $-modules and we have \[ \Ext^i_{\Ep}(\m_2,\m_1)\cong \left(\plim_n \Ext^i_{\E/p^n}(\m_2^0/p^n,\m_1^0/p^n)\right)[1/p], \]
		where the groups $ \Ext_{\E[1/p]}^i$ and $ \Ext^i_{\E/p^n} $ are computed in the category of finitely generated $ \E[1/p] $- and $ \E/p^n $-modules, respectively.
	\end{lem}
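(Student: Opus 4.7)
The plan is to compute both sides via a single projective resolution over $\E$ and to transfer between the categories using exactness of localization plus Jensen's vanishing theorem. First I would construct the lattices: pick a finite generating set of $\m_i$ over $\Ep$ and let $\m_i^0 \subset \m_i$ be the $\E$-submodule it spans. Since $\m_i$ is a $\Q_p$-vector space, the lattice $\m_i^0$ is automatically $p$-torsionfree and satisfies $\m_i^0[1/p] = \m_i$. Because $\E$ is finitely generated over its Noetherian center $Z(\E)$ (Pa\v{s}k\={u}nas \cite{pavskunas2021finiteness}, Section 6.4), $\E$ is itself Noetherian, so $\m_2^0$ admits a resolution $P_\bullet \to \m_2^0$ by finitely generated projective $\E$-modules. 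Localizing at $p$ gives a projective resolution $P_\bullet[1/p] \to \m_2$ in finitely generated $\Ep$-modules, and since each $P_i$ is finitely presented, $\Hom_\E(P_i,-)$ commutes with the filtered colimit $M \mapsto M[1/p]$, which yields
\[ \Ext^i_{\Ep}(\m_2,\m_1) \;\cong\; \Ext^i_\E(\m_2^0,\m_1^0)[1/p]. \]

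Next I would bring in the inverse limit. The module $\m_1^0$, being finitely generated over the pseudocompact ring $\E$, is complete in its natural topology and in particular $\m_1^0 \cong \plim_n \m_1^0/p^n\m_1^0$, so
\[ \Hom_\E(P_i,\m_1^0) \;=\; \plim_n \Hom_{\E/p^n}(P_i/p^n,\m_1^0/p^n). \]
Because each $P_i$ is $\E$-projective (hence $\Z_p$-flat, using that $\E$ is $\O$-flat) and $\m_2^0$ is $p$-torsionfree, reducing the resolution mod $p^n$ gives a resolution $P_\bullet/p^n \to \m_2^0/p^n$ by finitely generated projective $\E/p^n$-modules, so the cohomology of the reduced Hom-complex computes $\Ext^i_{\E/p^n}(\m_2^0/p^n,\m_1^0/p^n)$. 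The tower of Hom-complexes is termwise surjective (by projectivity of $P_i$ applied to the surjections $\m_1^0/p^{n+1}\twoheadrightarrow \m_1^0/p^n$), hence Mittag--Leffler, and \cite[Theorem 3.5.8]{weibel1995introduction} yields
\[ 0 \to {\plim_n}^{(1)}\! \Ext^{i-1}_{\E/p^n}(\m_2^0/p^n,\m_1^0/p^n) \to \Ext^i_\E(\m_2^0,\m_1^0) \to \plim_n \Ext^i_{\E/p^n}(\m_2^0/p^n,\m_1^0/p^n) \to 0. \]

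To finish I would invoke Jensen's vanishing theorem \cite{jensen1970vanishing}, already exploited in Proposition \ref{H=limH}. Each $\Ext^i_{\E/p^n}(\m_2^0/p^n,\m_1^0/p^n)$ is finitely generated over $\E/p^n$ by Noetherianity, and since $\E$ is finitely generated over $Z(\E)$, it is finitely generated over the complete Noetherian local commutative ring $Z(\E)$. Jensen then gives ${\plim_n}^{(1)} = 0$, so $\Ext^i_\E(\m_2^0,\m_1^0) \cong \plim_n \Ext^i_{\E/p^n}(\m_2^0/p^n,\m_1^0/p^n)$; inverting $p$ and combining with the first step yields the claimed isomorphism. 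The main obstacle is precisely this finiteness verification feeding Jensen's theorem: one has to package the non-commutative $\Ext$-groups as finitely generated modules over a complete Noetherian local commutative ring, which is exactly where the structural result that $\E$ is finite over its Noetherian center plays its role; a secondary technical point, worth being careful about, is that $p$-adic and natural topologies on finitely generated $\E$-modules are sufficiently compatible to identify $\m_1^0$ with $\plim_n \m_1^0/p^n$.
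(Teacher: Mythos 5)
Your proposal is correct and follows essentially the same route as the paper: the same choice of lattices $\m_i^0$ as $\E$-submodules spanned by a finite generating set, a resolution by finite free (resp.\ finitely generated projective) modules over the Noetherian ring $\E$, reduction of that resolution mod $p^n$ via $\O$-torsion-freeness to identify $\Ext^i_{\E/p^n}$, and passage to the limit using $\m_1^0\cong\plim_n\m_1^0/p^n$. The only differences are cosmetic: you make explicit the localization step $\Ext^i_{\Ep}(\m_2,\m_1)\cong\Ext^i_{\E}(\m_2^0,\m_1^0)[1/p]$ that the paper leaves implicit, and you justify the exchange of $\plim$ with cohomology via Mittag--Leffler and Jensen over $Z(\E)$ (which does require, as you should note, that the center of $\E$ is a complete Noetherian local ring, available from Pa\v{s}k\={u}nas's results), whereas the paper argues directly from finite generation/pseudocompactness of the terms of the tower.
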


	\begin{proof}
		We can choose a finite set of generators of the $ \Ep $-module $ \m_i $ and let $ \m_i^0 $ be the $ \E $-submodule of $ \m_i $ generated by those generators. Then $ \m_i^0 $ is an $ \O $-lattice in $ \m_i $. The ring $ \E $ is in fact pseudo-compact (by Proposition 13 in \cite[§IV.4]{gabriel1962categories}) and is Noetherian, as it is finitely generated over its center, which is Noetherian (\cite[Corollary 6.4]{pavskunas2021finiteness}). Therefore, the finitely generated $ \E $-modules $ \m_i^0 $ can be presented as the cokernels of morphisms $ \E^{\oplus n}\rightarrow \E^{\oplus m} $ of pseudo-compact $ \E $-modules and hence $ \m_i^0 $ equipped with the quotient topology, are also pseudo-compact (using Theorem 3 in chapter IV of \cite{gabriel1962categories}). Moreover, by Proposition 13 in Chapter IV of \cite{gabriel1962categories}, the quotient of $ \E $ by its Jacobson radical is isomorphic to $ \End_{\mathfrak{C}(\O)}(\oplus_{i=1}^n \pi_i^\vee) \cong \prod_{i=1}^{n}\End_{\mathfrak{C}(\O)}(\pi_i^\vee)$, which is a finite-dimensional vector space over the residue field of $ \O $. Hence,  the $ \O $-modules $ \m_i^0 $ are also pseudo-compact, thus they can be written as a projective limit $ \m_i^0\cong \plim_n\m_i^0/p^n $. We obtain \[ \m_i\cong (\plim_n \m_i^0/p^n)\otimes_\O L. \]
		Since the ring $ \E $ is Noetherian, we can compute the $ \Ext $-group $ \Ext^i_{\E}(\m_2^0,\m_1^0) $ using a projective resolution \[ \dots \rightarrow \E^{\oplus s_2}\rightarrow \E^{\oplus s_1}\rightarrow \E^{\oplus s_0}\rightarrow \m_2^0\rightarrow 0,\] by free $ \E $-modules of finite rank. Then \[ \Ext^i_{\E}(\m_2^0,\m_1^0)= H^i(\Hom_{\E}(\E^{\oplus s_\bullet},\plim_n \m_1^0/p^n))\cong H^i(\plim_n\Hom_{\E}(\E^{\oplus s_\bullet}, \m_1^0/p^n)) . \] But since the modules $ \Hom_{\E}(\E^{\oplus s_\bullet}, \m_1^0/p^n) $ are finitely generated, the projective limit commutes with taking the cohomology and we obtain \[ \Ext^i_{\E}(\m_2^0,\m_1^0)= \plim_n H^i(\Hom_{\E}(\E^{\oplus s_\bullet}, \m_1^0/p^n))=\plim_n \Ext^i_{\E}(\m_2^0,\m_1^0/p^n). \]
		Moreover, since the module $ \m_2^0 $ is $ \O $-torsion free, the resolution $ \E^{\oplus s_\bullet}\twoheadrightarrow \m_2^0 $ stays exact after tensoring with $ \E/p^n $. In particular, the resulting sequence $ (\E/p^n)^{\oplus s_\bullet}\twoheadrightarrow \m_2^0/p^n  $ is a projective resolution of $ \m_2^0/p^n $ as an $ \E/p^n $-module and we get \begin{align*} \Ext^i_{\E}(\m_2^0,\m_1^0/p^n)&=H^i(\Hom_{\E}(\E^{\oplus s_\bullet},\m_1^0/p^n))\\ &\cong H^i(\Hom_{\E/p^n}({(\E/p^n)}^{\oplus s_\bullet},\m_1^0/p^n))\\&=\Ext^i_{\E/p^n}(\m_2^0/p^n, \m_1^0/p^n). \end{align*}
	\end{proof}
	
	Note that, since $ \E $ is Noetherian, the groups $ \Ext_{\E/p^n }^i(\m_2^0/p^n,\m_1^0/p^n) $ computed in the category of finitely generated $ \E/p^n $-modules agree with the ones computed in the category of compact $ \E/p^n $-modules, which is anti-equivalent to $ \MOD_{G,\zeta}^{\lfin}(\O/\varpi^n)_{\B} $. Hence, we get the following lemma:
	
	\begin{lem}
		In the notation of the previous lemma, we have \[ \Ext^i_{\E/p^n}(\m_2^0/p^n,\m_1^0/p^n)\cong \Ext^i_{\MOD^{\sm}_{G,\bar{\zeta}}(\O/\varpi^n)}(((\m_1^0/p^n)\widehat{\otimes}_{\E}\textP)^{\vee},((\m_2^0/p^n)\widehat{\otimes}_{\E}\textP)^{\vee}) . \]
	\end{lem}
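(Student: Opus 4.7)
The plan is to deduce the isomorphism by tracking how Ext-groups transform under the natural anti-equivalence relating the two categories in the statement. Combining the equivalence $\mathfrak{C}(\O) \simeq \{\text{compact right } \E\text{-modules}\}$ given by $M \mapsto \Hom_{\mathfrak{C}(\O)}(\textP,M)$ with the Pontryagin duality anti-equivalence between $\MOD^{\lfin}_{G,\zeta}(\O)_{\B}$ and $\mathfrak{C}(\O)$, and restricting to $p^n$-torsion objects on both sides, yields a contravariant equivalence of categories
\begin{equation*}
\Phi \colon \{\text{compact right } \E/p^n\text{-modules}\} \longrightarrow \MOD^{\lfin}_{G,\bar{\zeta}}(\O/\varpi^n)_{\B}, \qquad \m \mapsto (\m \,\widehat{\otimes}_{\E}\, \textP)^{\vee}.
\end{equation*}
Under $\Phi$, the modules $\m_1^0/p^n$ and $\m_2^0/p^n$ map precisely to the two representations appearing on the right-hand side of the lemma. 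A contravariant equivalence induces natural isomorphisms on Ext-groups with arguments reversed, so one obtains
\begin{equation*}
\Ext^i_{\E/p^n\text{-cpt}}(\m_2^0/p^n,\, \m_1^0/p^n) \;\cong\; \Ext^i_{\MOD^{\lfin}_{G,\bar{\zeta}}(\O/\varpi^n)_{\B}}\bigl(((\m_1^0/p^n) \,\widehat{\otimes}_{\E}\, \textP)^{\vee},\, ((\m_2^0/p^n) \,\widehat{\otimes}_{\E}\, \textP)^{\vee}\bigr).
\end{equation*}

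It remains to identify these Ext-groups with those appearing in the lemma's statement. For the left-hand side, the remark preceding the lemma asserts that Ext in compact $\E/p^n$-modules coincides with Ext in finitely generated $\E/p^n$-modules: this uses that $\E$, and hence $\E/p^n$, is Noetherian, so any finitely generated $\E/p^n$-module admits a projective resolution by finitely generated free $\E/p^n$-modules, which computes Ext in either abelian category. For the right-hand side, the block decomposition $\MOD^{\lfin}_{G,\bar{\zeta}}(\O/\varpi^n) \cong \prod_{\B} \MOD^{\lfin}_{G,\bar{\zeta}}(\O/\varpi^n)_{\B}$ shows that Ext over the block agrees with Ext in $\MOD^{\lfin}_{G,\bar{\zeta}}(\O/\varpi^n)$, and Corollary 5.17 of \cite{paskColmez} then identifies the latter with Ext computed in the ambient category $\MOD^{\sm}_{G,\bar{\zeta}}(\O/\varpi^n)$.

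The main obstacle is really bookkeeping rather than a conceptual difficulty: one must verify that $\m_i^0/p^n$, which is finitely generated over $\E$ and annihilated by $p^n$ (hence also compact in its pseudo-compact topology, as established in the proof of the previous lemma), genuinely corresponds under $\Phi$ to an object of $\MOD^{\lfin}_{G,\bar{\zeta}}(\O/\varpi^n)_{\B}$ with central character $\bar{\zeta}$ and with all irreducible subquotients in $\B$. This follows from the very construction of $\mathfrak{C}(\O)$ as the subcategory of $\MOD_G^{\text{pro aug}}(\O)$ anti-equivalent to $\MOD^{\lfin}_{G,\zeta}(\O)_{\B}$ via Pontryagin duality, combined with the fact that $\textP$ is the projective envelope in $\mathfrak{C}(\O)$ of $\oplus_{i=1}^s \pi_i^{\vee}$ for $\pi_1,\dots,\pi_s$ the irreducible objects of $\B$. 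Once this identification is in place, all remaining steps are formal consequences of the two (anti-)equivalences and the Noetherianness of $\E$.
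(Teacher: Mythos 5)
Your proposal is correct and follows essentially the same route as the paper: transport the Ext-groups along the anti-equivalence $\m\mapsto(\m\,\widehat{\otimes}_{\E}\textP)^{\vee}$ between compact $\E/p^n$-modules and $\MOD^{\lfin}_{G,\bar{\zeta}}(\O/\varpi^n)_{\B}$, identify compact-module Ext with finitely generated Ext via Noetherianness of $\E$ (which the paper records in the remark preceding the lemma), and then pass from the block to $\MOD^{\sm}_{G,\bar{\zeta}}(\O/\varpi^n)$ using the block decomposition and Corollaries 5.17 and 5.18 of \cite{paskColmez}. Your extra bookkeeping about $\m_i^0/p^n$ landing in the correct block is a sound elaboration of what the paper leaves implicit.
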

	
	\begin{proof}
		Via the anti-equivalence between the category of compact $ \E/p^n  $-modules and the category $  \MOD_{G,\bar{\zeta}}^{\lfin}(\O/\varpi^n)_{\B}  $, the modules $ \m_i^0/p^n $ correspond to $ ((\m_i^0/p^n)\widehat{\otimes}_{\E}\textP)^{\vee} $. The claim follows then from the decomposition of the category $ \MOD_{G,\bar{\zeta}}^{\lfin}(\O/\varpi^n) $ into blocks and the fact that the $ \Ext $-group of locally finite representations in $ \MOD_{G,\bar{\zeta}}^{\lfin}(\O/\varpi^n) $ is isomorphic to their $ \Ext $-group computed in the category of smooth representations $ \MOD^{\sm}_{G,\bar{\zeta}}(\O/\varpi^n) $ (see Corollary 5.17 and 5.18 in \cite{paskColmez}).
	\end{proof}
	
	\begin{lem}
	 Let $ \Pi_1 $ and $ \Pi_2 $ be admissible unitary Banach space representations in $ \Banb $ and set $ \m_i:=\m(\Pi_i) $. Then there are $ \E $-stable $ \O $-lattices $ \m_1^0, $ $ \m_2^0 $ in $ \m_1 $ and $ \m_2 $ respectively, such that \[ \Ext^i_{\MOD_{G,\bar{\zeta}}^{\sm}(\O/\varpi^n)}(\Pi_1^0/p^n,\Pi_2^0/p^n)\cong \Ext^i_{\MOD_{G,\bar{\zeta}}^{\sm}(\O/\varpi^n)}(((\m_1^0/p^n)\widehat{\otimes}_{\E}\textP)^{\vee},((\m_2^0/p^n)\widehat{\otimes}_{\E}\textP)^{\vee}), \] where $ \Pi_1^0 $ and $ \Pi_2^0 $ are open bounded $ G $-invariant lattices in $ \Pi_1 $ and $ \Pi_2 $, respectively.
	\end{lem}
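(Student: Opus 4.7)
The plan is to make a careful choice of the lattices $\m_i^0$ so that, after reduction modulo $p^n$ and Pontryagin dualization, the pair of representations on the right-hand side is literally identified with $(\Pi_1^0/p^n, \Pi_2^0/p^n)$; the claimed isomorphism of Ext groups then follows by functoriality.

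Concretely, I would set
\[ \m_i^0 := \Hom_{\mathfrak{C}(\O)}(\textP,(\Pi_i^0)^d), \]
the compact $\E$-module associated to the Schikhof dual $(\Pi_i^0)^d \in \mathfrak{C}(\O)$ under the equivalence of categories recalled earlier. By construction $\m_i^0[1/p] = \m(\Pi_i) = \m_i$, and since $\E$ is Noetherian and $(\Pi_i^0)^d$ is a finitely generated object of $\mathfrak{C}(\O)$, $\m_i^0$ is a finitely generated $\E$-stable $\O$-lattice in $\m_i$. Applying the inverse of the equivalence one recovers $\m_i^0 \widehat{\otimes}_\E \textP \cong (\Pi_i^0)^d$.

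Next I would reduce modulo $p^n$. Since $\textP$ is projective in $\mathfrak{C}(\O)$, the functor $(-) \widehat{\otimes}_\E \textP$ is right exact, and applied to $0 \to \m_i^0 \xrightarrow{p^n} \m_i^0 \to \m_i^0/p^n \to 0$ it gives
\[ (\m_i^0/p^n)\widehat{\otimes}_\E \textP \;\cong\; (\Pi_i^0)^d/p^n. \]
I then identify $(\Pi_i^0)^d/p^n$ with the Pontryagin dual $(\Pi_i^0/p^n)^\vee$: applying $\Hom_\O^{\cts}(-,\O)$ to $0 \to \Pi_i^0 \xrightarrow{p^n} \Pi_i^0 \to \Pi_i^0/p^n \to 0$ and using the short exact sequence $0 \to \O \to L \to L/\O \to 0$ together with the fact that $L$ is injective as a discrete $\O$-module, one obtains $(\Pi_i^0)^d/p^n \cong \Ext^1_\O(\Pi_i^0/p^n,\O) \cong \Hom_\O(\Pi_i^0/p^n, L/\O) = (\Pi_i^0/p^n)^\vee$. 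Taking Pontryagin duals and invoking reflexivity for admissible smooth $\O/p^n$-representations yields
\[ ((\m_i^0/p^n)\widehat{\otimes}_\E \textP)^\vee \;\cong\; \Pi_i^0/p^n \]
as objects of $\MOD^{\sm}_{G,\bar{\zeta}}(\O/\varpi^n)$, and the asserted isomorphism of Ext groups follows.

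The main technical point is ensuring that the completed tensor product and Pontryagin duality interact correctly with reduction modulo $p^n$: projectivity of $\textP$ (together with the pseudo-compactness of $\E$, so that $\widehat{\otimes}_\E$ is well-behaved) handles the first issue, while the injectivity of $L$ as a discrete $\O$-module provides the bridge between the Schikhof dual $(\Pi_i^0)^d$ and the Pontryagin dual $(\Pi_i^0/p^n)^\vee$ on the $p^n$-torsion quotient. Once these two compatibilities are verified, the remainder of the argument is a purely formal identification of the two representations whose Ext groups are being compared.
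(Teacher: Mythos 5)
Your construction is the same as the paper's: you take $\m_i^0=\Hom_{\mathfrak{C}(\O)}(\textP,(\Pi_i^0)^d)$ and reduce the lemma to the identification $((\m_i^0/p^n)\widehat{\otimes}_{\E}\textP)^\vee\cong\Pi_i^0/p^n$, the only cosmetic difference being that you verify this on the representation side by applying the inverse functor $-\widehat{\otimes}_{\E}\textP$, while the paper verifies the equivalent statement $\Hom_{\mathfrak{C}(\O)}(\textP,(\Pi^0/p^n)^\vee)\cong\Hom_{\mathfrak{C}(\O)}(\textP,(\Pi^0)^d)/p^n$ on the module side, where projectivity of $\textP$ is what lets one pull the reduction mod $p^n$ through the Hom. (Note that the right exactness of $-\widehat{\otimes}_{\E}\textP$ is not a consequence of projectivity of $\textP$; it holds because this functor is the inverse of an equivalence of abelian categories, hence exact.) Both arguments then hinge on the single nontrivial input $(\Pi^0)^d/p^n\cong(\Pi^0/p^n)^\vee$.

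It is in your justification of that input that there is a gap. The paper simply cites \cite{paskunas2018Ludwig}; you instead argue via the long exact sequence obtained by applying $\Hom_\O(-,\O)$ to $0\to\Pi^0\xrightarrow{p^n}\Pi^0\to\Pi^0/p^n\to0$. That sequence only yields $(\Pi^0)^d/p^n\cong\ker\bigl(\Ext^1_\O(\Pi^0/p^n,\O)\to\Ext^1_\O(\Pi^0,\O)\bigr)$; to conclude $(\Pi^0)^d/p^n\cong\Ext^1_\O(\Pi^0/p^n,\O)$ (the further identification $\Ext^1_\O(\Pi^0/p^n,\O)\cong\Hom_\O(\Pi^0/p^n,L/\O)$ is fine, since $L$ and $L/\O$ are divisible, hence injective $\O$-modules) you still need the map into $\Ext^1_\O(\Pi^0,\O)$ to vanish, e.g.\ $\Ext^1_\O(\Pi^0,\O)=0$. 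This is not formal: it is precisely the point where one must use that $\Pi^0$ is an open bounded lattice in an $L$-Banach space, so $\varpi$-adically complete, separated and torsion-free (for instance, via an orthonormal basis one compares $\Pi^0$ with a module of the form $c_0(I,\O)$, or one invokes that the compact module $\O$ is cotorsion). Equivalently, the substantive half of the cited isomorphism is the surjectivity statement that every homomorphism $\Pi^0/p^n\to\O/p^n$ lifts to $\Pi^0\to\O$, and your sketch silently assumes it. Supply this vanishing (or cite the reference, as the paper does) and the rest of your argument goes through.
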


	\begin{proof}
		Recall that for $ \Pi\in \Banb $, $\m(\Pi)$ was defined as $\Hom_{\mathfrak{C}(\O)}(\textP,(\Pi^0)^d)\otimes_\O L $. Hence, $ \m(\Pi)^0:=\Hom_{\mathfrak{C}(\O)}(\textP,(\Pi^0)^d) $ defines an $ \E $-stable $ \O $-lattice in $ \m(\Pi) $. To show that it satisfies the claimed isomorphism, we need to prove that the smooth representation $ \Pi^0/p^n\in\MOD_{G,\zeta}^{\lfin}(\O)_{\B}  $ is isomorphic to $ ((\m(\Pi)^0/p^n)\widehat{\otimes}_{\E}\textP)^\vee $. But by the anti-equivalence of categories between $  \MOD_{G,\zeta}^{\lfin}(\O)_{\B}  $ and compact $ \E  $-modules, this is equivalent to having an isomorphism \[ \Hom_{\mathfrak{C}(\O)}(\textP, (\Pi^0/p^n)^\vee)\cong \m(\Pi)^0/p^n, \] or in other words \[ \Hom_{\mathfrak{C}(\O)}(\textP, (\Pi^0/p^n)^\vee)\cong \Hom_{\mathfrak{C}(\O)}(\textP, (\Pi^0)^d)/p^n .\]
		We conclude by using the isomorphism (\cite[Section 2]{paskunas2018Ludwig})
		\[ (\Pi^0)^d/p^n\cong( \Pi^0/p^n)^\vee .\]
	\end{proof}

	These lemmas combined prove Proposition \ref{prop Ext=limExt}.\\

	To state the next proposition, we need to introduce the following notation:
	
	Let $ \1 $ be the trivial one-dimensional representation in $ \Badm_G(L) $, $ \widehat{\Sp} $ be the universal unitary completion of the smooth Steinberg representation of $ G $ over $ L $. Let $ B $ be the subgroup of upper triangular matrices in $ G $. Let $ \tilde{\alpha} :B\rightarrow L^\times $ be the representation of $ B $, defined by $ \begin{pmatrix}
	a&b\\0& d
	\end{pmatrix}\mapsto ad^{-1} \lvert ad^{-1}\rvert  $. Then $ \lvert \tilde{\alpha}(b) \rvert =1$ for all $ b\in B $ and we can define $ \Ind_B^G\tilde{\alpha}$ to be the induced representation, given by continuous functions $ f:G\rightarrow L $ with $ f(bg)=\tilde{\alpha}(b)f(g) $ for all $ b\in B $ and $ g\in G $, endowed with the supremum norm. This defines an admissible unitary Banach space representation of $ G $ (see Section 7.2 of \cite{paskColmez} for more details).

	\begin{prop}\label{H(SL2,Pi)=0}
		Let $ \Pi\in\Badm_{G,\zeta}(L) $ be an absolutely irreducible admissible unitary Banach space representation of $ G $, let $ \Pi^0 $ be an open bounded $ G $-invariant lattice in $ \Pi $ and assume that $ \Pi $ is not isomorphic to a twist by a unitary character of $ \1,\ \widehat{\Sp}$ or $ \Ind^G_B\tilde{\alpha}$. Then, for all $ i\ge 0 $, we have \[ H^i(\SL(\Q_p),\Pi) =0 .\]
	\end{prop}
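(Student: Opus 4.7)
My plan is to mimic the proof of Corollary \ref{H(SL2,pi)=0}, lifting it to the Banach setting via the projective-limit machinery of Section \ref{section banach reps}. The overall goal is to identify $H^i(\SL(\Q_p), \Pi)$ with a Yoneda Ext-group in $\Banb$ and then apply the block decomposition of Pa\v{s}k\={u}nas to force its vanishing.

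First I would handle the central character. Since $Z \cap \SL(\Q_p) = \{\pm I\}$, the scalar $-I$ acts on $\Pi$ by $\zeta(-I) \in \{\pm 1\}$. If $\zeta(-I) = -1$, then $1-(-1)=2$ is a unit in $L$, and Lemma \ref{wlog center acts trivially} applied with $R = L$ immediately gives the desired vanishing. Otherwise $\zeta$ extends canonically to a unitary character $\tilde\zeta$ of $Z\SL(\Q_p)$ that is trivial on $\SL(\Q_p)$.

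Next I would assemble the following chain of isomorphisms. Proposition \ref{H=limH} applied to the reductive group $\SL(\Q_p)$ gives $H^i(\SL(\Q_p), \Pi) \cong (\plim_n H^i(\SL(\Q_p), \Pi^0/\varpi^n\Pi^0))[1/\varpi]$. At each level $n$, Corollary \ref{main result} rewrites the cohomology as $\Ext^i_{\SL(\Q_p)}(\1, \Pi^0/\varpi^n\Pi^0)$, and the two adjunctions used in the proof of Corollary \ref{H(SL2,pi)=0} (extension from $\SL(\Q_p)$ to $Z\SL(\Q_p)$ followed by induction from $Z\SL(\Q_p)$ to $G$, both left adjoint to exact restriction functors) transform it via Lemma \ref{Vigneras Lemma on Ext groups} into $\Ext^i_{\MOD^{\sm}_{G,\bar\zeta}(\O/\varpi^n)}(\Ind^G_{Z\SL(\Q_p)}\bar\zeta, \Pi^0/\varpi^n\Pi^0)$, where $\bar\zeta$ denotes $\tilde\zeta$ reduced modulo $\varpi^n$. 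Taking $\plim_n$ and inverting $\varpi$, Proposition \ref{prop Ext=limExt} applied to the finite-length Banach representation $\Ind^G_{Z\SL(\Q_p)}\tilde\zeta$ (finite-dimensional, hence admissible unitary of finite length) then yields the desired identification $H^i(\SL(\Q_p), \Pi) \cong \Ext^i_{\Banb}(\Ind^G_{Z\SL(\Q_p)}\tilde\zeta, \Pi)$.

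The remaining step, and the main obstacle, is the vanishing of this Banach Ext-group. As in the proof of Proposition \ref{ext_Z=0}, the action of $G$ on the finite-dimensional representation $\Ind^G_{Z\SL(\Q_p)}\tilde\zeta$ factors through a finite abelian quotient, so after enlarging $L$ it admits a filtration whose graded pieces are of the form $\eta\circ\det$ for unitary characters $\eta$. It therefore suffices to verify $\Ext^i_{\Banb}(\eta\circ\det, \Pi) = 0$ for each such $\eta$. Here I would invoke the block decomposition of $\Banb$ from \cite[Proposition 5.34]{paskColmez} together with the classification of blocks in \cite[Corollary 1.2]{pavskunas2014blocks}: the block containing a character $\eta\circ\det$ consists precisely of the twists by unitary characters of $\1$, $\widehat{\Sp}$ and $\Ind^G_B\tilde\alpha$. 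Since $\Pi$ is assumed not to be isomorphic to any such twist, it lies in a different block from each $\eta\circ\det$, so the Ext-groups vanish and the proposition follows.
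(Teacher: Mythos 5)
Your reduction of $H^i(\SL(\Q_p),\Pi)$ to the Banach Ext-group $\Ext^i_{\Badm_{G,\zeta}(L)}(\Ind^G_{Z\SL(\Q_p)}\tilde\zeta,\Pi)$ — the treatment of the central character via Lemma \ref{wlog center acts trivially}, the two adjunctions at each finite level, and the appeal to Propositions \ref{H=limH} and \ref{prop Ext=limExt}, followed by filtering the induced representation by characters $\eta\circ\det$ — matches the paper's argument. The gap is in your final step. The blocks of $\Badm_{G,\zeta}(L)$ are indexed by equivalence classes $\B$ of irreducible smooth $k$-representations, and $\Pi$ lies in $\Badm_{G,\zeta}(L)_{\B}$ exactly when all irreducible subquotients of $\Pi^0\otimes_\O k$ lie in $\B$. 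The block attached to $\eta\circ\det$ is $\B=\{\bar\eta\circ\det,\ \Sp\otimes\bar\eta\circ\det,\ \Ind_B^G\alpha\otimes\bar\eta\circ\det\}$, a set of mod-$p$ representations; the corresponding Banach block $\Badm_{G,\zeta}(L)_{\B}$ contains many more absolutely irreducible objects than the unitary twists of $\1$, $\widehat{\Sp}$ and $\Ind_B^G\tilde\alpha$ — for instance non-ordinary Banach representations (such as those attached by the $p$-adic Langlands correspondence to irreducible two-dimensional Galois representations with reducible reduction) whose reduction mod $\varpi$ has all constituents in $\B$. So the hypothesis that $\Pi$ is not a twist of $\1$, $\widehat{\Sp}$ or $\Ind_B^G\tilde\alpha$ does \emph{not} force $\Pi$ into a different block from $\eta\circ\det$, and your claim that the block containing $\eta\circ\det$ ``consists precisely of'' those twists is false; the block decomposition alone does not kill the Ext-group.

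The paper closes exactly this gap with a finer decomposition: after using the block decomposition to reduce to the case $\Pi\in\Banb$ for the block $\B$ above, it invokes the decomposition $\Banb\cong\bigoplus_{\mathfrak{n}\in\mspec Z_{\B}[1/p]}\Ban^{\adm}_{G,\zeta}(L)_{\B,\mathfrak{n}}$ of \cite[Section 6.2]{pavskunas2021finiteness}, indexed by maximal ideals of the center $Z_{\B}$ of the endomorphism ring $\E$, together with \cite[Corollary 6.10]{pavskunas2021finiteness}, which identifies the irreducible objects of the summand containing $\eta\circ\det$ as precisely the excluded representations. It is this second, ``central'' decomposition — not the residual-block one — that separates $\Pi$ from $\eta\circ\det$ and yields $\Ext^i_{\Badm_{G,\zeta}(L)}(\eta\circ\det,\Pi)=0$. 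To repair your proof you must add this step (or an equivalent argument distinguishing the maximal ideals of $Z_{\B}[1/p]$ supporting $\m(\Pi)$ and $\m(\eta\circ\det)$); everything before it is fine.
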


	\begin{proof}
		We argue as in the proof of Corollary \ref{H(SL2,pi)=0}. Namely, by Proposition \ref{H=limH} and Corollary \ref{main result}, we have \begin{align*}
		H^i(\SL(\Q_p),\Pi)\cong (\plim_nH^i(\SL(\Q_p),\Pi^0/\varpi^n\Pi^0))[1/\varpi] \\ \cong  (\plim_n\Ext^i_{\MOD^{\sm}_{\SL(\Q_p)}(\O/\varpi^n)}(\1,\Pi^0/\varpi^n\Pi^0))[1/\varpi] .
		\end{align*}
		Let $ Z $ be the center of $ G $. Without loss of generality, we may assume that $ Z\cap\SL(\Q_p) $ acts trivially on $ \Pi $, since otherwise, Lemma \ref{wlog center acts trivially} implies that the groups $ H^i(\SL(\Q_p),\Pi) $ are zero for all $ i\ge 0 $. 
		Then we get an exact functor \[ (-)\vert_{\SL(\Q_p)}:\MOD^{\sm}_{Z\SL(\Q_p),\bar{\zeta}}(\O/\varpi^n)\rightarrow \MOD^{\sm}_{\SL(\Q_p),\1}(\O/\varpi^n) ,\] where $ \bar{\zeta} $ is the composition of $ \zeta $ with the projection onto $ \O/\varpi^n $. The functor $ (-)\vert_{\SL(\Q_p)} $ preserves injective objects and hence we obtain an isomorphism of $ \Ext $-groups \[\Ext^i_{\MOD^{\sm}_{Z\SL(\Q_p),\bar{\zeta}}(\O/\varpi^n)}(\bar{\zeta},\Pi^0/\varpi^n\Pi^0)\cong \Ext^i_{\MOD^{\sm}_{\SL(\Q_p),\1}(\O/\varpi^n)}(\1,\Pi^0/\varpi^n\Pi^0).  \]
		Using this, we get
		 \begin{align*}
		H^i(\SL(\Q_p),\Pi)\cong (\plim_n\Ext^i_{\MOD^{\sm}_{Z\SL(\Q_p),\bar{\zeta}}(\O/\varpi^n)}(\bar{\zeta},\Pi^0/\varpi^n\Pi^0))[1/\varpi] \\ \cong   (\plim_n\Ext^i_{\MOD^{\sm}_{G,\bar{\zeta}}(\O/\varpi^n)}(\Ind_{Z\SL(\Q_p)}^G\bar{\zeta},\Pi^0/\varpi^n\Pi^0))[1/\varpi] \\
		\cong (\plim_n\Ext^i_{\MOD^{\sm}_{G,\bar{\zeta}}(\O/\varpi^n)}(\Ind_{Z\SL(\Q_p)}^G\zeta/\varpi^n,\Pi^0/\varpi^n\Pi^0))[1/\varpi], 
		\end{align*}
		where in the bottom line, we consider the induced representation $ \Ind_{Z\SL(\Q_p)}^G\zeta \in \MOD^{\sm}_{G,\zeta}(\O)$ as in Proposition \ref{ext_Z=0}. Since $ Z\SL(\Q_p) $ is of index 4 or 8 in $ G $ (depending on whether $ p\neq 2 $ or $ p=2 $,) and the character $ \zeta $ is unitary, $ \Ind_{Z\SL(\Q_p)}^G\zeta $ is a 4-, or 8-dimensional unitary Banach space representation of $ G $. Then we can apply Proposition \ref{prop Ext=limExt} to get \[ H^i(\SL(\Q_p),\Pi)\cong  \Ext^i_{\Badm_{G,\zeta}(L)}(\Ind_{Z\SL(\Q_p)}^G\zeta\otimes_\O L,\Pi). \]
		
		Therefore, it suffices to prove that this Ext-group vanishes. We may replace $ L $ by a finite field extension so that we may assume without loss of generality, that $\Ind_{Z\SL(\Q_p)}^G\zeta\otimes_\O L $ is reducible. Since  it is finite dimensional, we then have a short exact sequence of the  form \[ 0\rightarrow \eta_1\circ \det \rightarrow \Ind_{Z\SL(\Q_p)}^G\zeta\otimes_\O L\rightarrow \Pi_1 \rightarrow 0, \]
		for some unitary character $ \eta_1:\Q_p^\times\rightarrow L^\times $ and $ \Pi_1\in\Badm_{G,\zeta}(L)$ of dimension 3, respectively 7. We then repeat this as in the proof of Proposition \ref{ext_Z=0} and see that it is enough to show that $ \Ext_{\Badm_{G,\zeta}(L)}^i(\eta\circ \det,\Pi)=0 $ for each $ i\ge 0 $ and for each unitary character $ \eta: \Q_p^\times\rightarrow L^\times$. But by the decomposition \[ \Badm_{G,\zeta}(L)\cong \bigoplus_\mathcal{B}\Badm_{G,\zeta}(L)_{\mathcal{B}}, \]
		we can reduce to the case where $ \Pi $ is a representation in the same block $ \Banb $ as $ \eta\circ\det $. More precisely, this block is given by (Corollary 1.2 in \cite{pavskunas2014blocks}) \[ \B=\{\bar{\eta}\circ\det, \Sp\otimes\bar{\eta}\circ\det, \Ind_B^G\alpha\otimes\bar{\eta}\circ\det  \}, \] where $ \bar{\eta}:\Q_p \rightarrow k$ and $ \alpha:B\rightarrow k $ are the compositions of $ \eta  $, respectively $ \tilde{\alpha} $, with the quotiont map $ \O\twoheadrightarrow k$.
		
		And by \cite[Section 6.2]{pavskunas2021finiteness}, the category $ \Banb $ decomposes as a direct sum of subcategories \[ \Banb\cong \bigoplus_{\mathfrak{n}\in\mspec Z_{\B}[1/p]}\Ban^{\adm}_{G,\zeta}(L)_{\B,\mathfrak{n}} ,\] where $ Z_{\B} $ is the center of the endomorphism ring $ \E $ which depends on the block $ \B $, defined in Section \ref{sec:banach of GL2}. The category $ \Ban^{\adm}_{G,\zeta}(L)_{\B,\mathfrak{n}} $ is defined to be the full subcategory of $ \Ban^{\adm}_{G,\zeta}(L)_{\B} $ consisting of those representations which are killed by a power of the ideal $ \mathfrak{n} $.
		Moreover, the irreducible objects of the subcategory $ \Ban^{\adm}_{G,\zeta}(L)_{\B,\mathfrak{n}} $, which contains $ \eta\circ\det $, are described in Corollary 6.10 of \cite{pavskunas2021finiteness} and are precisely those representations that we excluded. In particular, there are no extensions between $ \Pi $ and $ \eta\circ\det $.

	\end{proof}

	\bibliographystyle{siam}
	\bibliography{CtsCohAndExt-Bib}
\end{document}